\swapnumbers \numberwithin{equation}{section}
\theoremstyle{plain}
\newtheorem{thm}{Theorem}[section]
\newtheorem{conjec}[thm]{Conjecture}
\newtheorem{prop}[thm]{Proposition}
\newtheorem{cor}[thm]{Corollary}
\newtheorem{prob}[thm]{Problem}
\theoremstyle{definition}
\newtheorem{question}[thm]{Question}
\DeclareMathOperator{\cat}{{\mbox{\rm cat$_{\rm LS}$}}}
\def\C{{\mathbb C}}
\def\Z{{\mathbb Z}}
\def\R{{\mathbb R}}
\def\1{\hbox{\rm\rlap {1}\hskip.03in{\rom I}}}
\def\Bbbone{{\rm1\mathchoice{\kern-0.25em}{\kern-0.25em}
{\kern-0.2em}{\kern-0.2em}I}}
\long\def\forget#1\forgotten{} %
\newcommand\ver[1]{\marginpar{\tiny Changed in Ver \VER}}
\date{\today}
\begin{document}

\title[On some problems]{On some problems related to the Hilbert-Smith conjecture}

\author[A.~Dranishnikov]{Alexander  Dranishnikov} %
\thanks{Supported by NSF, grant DMS-1304627}

\address{Alexander N. Dranishnikov, Department of Mathematics, University
of Florida, 358 Little Hall, Gainesville, FL 32611-8105, USA}
\email{dranish@math.ufl.edu}

\subjclass[2000]
{Primary 55M30; 
Secondary 53C23,  
57N65  
}

\maketitle

\section{Introduction}

The 5th Hilbert's problem~\cite{H} has an unsettled extension known as the Hilbert-Smith conjecture~\cite{Sm},\cite{Wi}:
\begin{conjec}[Hilbert-Smith Conjecture]
If a compact group $G$ acts effectively (freely) on a connected manifold, then $G$ is a Lie group.
\end{conjec}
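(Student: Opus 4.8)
The plan is to reduce the conjecture to the single case $G=\Zp$, the group of $p$-adic integers, and then to extract a contradiction from a hypothetical free action of $\Zp$ on a connected manifold by controlling the cohomological dimension of the orbit space.

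First I would invoke the Gleason--Yamabe structure theory for compact groups together with the Montgomery--Zippin no-small-subgroups criterion: a compact group is a Lie group precisely when it has no arbitrarily small subgroups. Hence a compact group $G$ acting effectively on a connected manifold $M$ that fails to be a Lie group must contain small closed subgroups, and the structure theory of compact totally disconnected groups then produces a closed subgroup isomorphic to $\Zp$ for some prime $p$. Restricting the action, $\Zp$ acts effectively, and under the freeness hypothesis of the conjecture we may assume $\Zp$ acts freely on $M$. It therefore suffices to prove that $\Zp$ admits no free action on a connected $n$-manifold.

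Second, granting such a free action, I would analyse the orbit projection $M\to M/\Zp$. Smith theory together with the work of C.-T.\ Yang indicates that a free $\Zp$-action forces a jump in cohomological dimension: one expects $\cd_{\Z}(M/\Zp)=n+2$ while $\dim M=n$. The strategy is to show this is incompatible with $M/\Zp$ being an essentially $n$-dimensional object, i.e.\ to show that no space can simultaneously be the continuous image of an $n$-manifold under a map with $\Zp$-fibres and carry integral cohomology of dimension $n+2$. Equivalently, I would attempt to manufacture a dimension-raising acyclic or cell-like map from the orbit data and contradict a cohomological-dimension estimate for orbit spaces.

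The hard part -- and the point at which every known attempt stalls -- is precisely the control of $\cd_{\Z}(M/\Zp)$ in dimensions $n\ge 4$. The obstruction is genuine rather than technical: $p$-adic actions really do raise cohomological dimension, so ruling out the free case requires either a new estimate bounding the cohomological dimension of the orbit space in terms of $\dim M$, or an extra geometric hypothesis of the kind that suffices in the Lipschitz case (Repov\v{s}--\v{S}\v{c}epin) and in dimension three (Pardon). I therefore expect the reduction to $\Zp$ to be essentially routine and this cohomological-dimension step to be the decisive obstacle; the problems studied in the present paper cluster around exactly this step rather than resolving it.
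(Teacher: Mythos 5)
The statement you set out to prove is not proved anywhere in the paper: it is the Hilbert--Smith Conjecture itself, stated as an open conjecture, and everything the paper establishes is conditional. The paper only treats the \emph{weak} form (free actions whose orbit space is finite dimensional) and reduces it to two further open problems --- the Essential Lens Sequence Problem and the Injectivity Conjecture (equivalently, via its later sections, the Hurewicz Fibration Problem or the ANR-ness of the moduli space $\mathcal M(F)$). So there is no paper proof for your attempt to match, and any purported blind proof of this statement must contain a gap.

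Your proposal does contain such a gap, and to your credit you locate it yourself. The first step --- reducing to the nonexistence of an effective (free) action of $A_p=\Zp$ on a manifold --- is standard and is exactly what the paper cites \cite{Sm2}. The fatal step is the second one: you hope to show that no space can simultaneously be the image of an $n$-manifold under an orbit map with $\Zp$-fibres and have cohomological dimension $n+2$. No such principle can hold at this level of generality. Yang's theorem $\dim_{\Z} M/A_p = n+2$ is a consistent consequence of a hypothetical action, not a contradiction-in-waiting: dimension-raising orbit maps of Cantor group actions on finite-dimensional compacta genuinely exist (the paper cites \cite{DW} and \cite{Le}), so any contradiction must exploit the manifold structure of $M$, not merely dimension theory of the orbit projection. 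This is precisely why the paper does not argue as you propose; instead it (conditionally) manufactures an arbitrarily long nonzero product in the reduced $K$-theory of a rough classifying space $B$ for $A_p$ (via the Essential Lens Sequence Problem), transfers a product of length $n+3$ to $K^0(M/p^kA_p)$ through the Borel construction and a cell-like shape equivalence (using finite-dimensionality of the orbit space together with the Injectivity Conjecture), and contradicts the bound that $K$-theoretic cup-length is at most $\dim M/p^kA_p = n+2$. Even this refined version of your strategy yields only the weak conjecture, and only modulo two open problems; the step you flag as ``the decisive obstacle'' is exactly where the paper, too, stops short of an unconditional proof.
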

It is known that the Hilbert-Smith conjecture is equivalent to the question whether the group of $p$-adic integers
$A_p$ can act effectively (freely) on a manifold~\cite{Sm2}. 

The Conjecture is proven for $n$-dimensional manifolds
with $n\le 2$~\cite{MZ2} and $n=3$~\cite{P}. For arbitrary $n$ the Hilbert-Smith Conjecture is proven for smooth actions on a smooth manifold~\cite{MZ2}, for Lipschitz actions on  Riemannian manifolds~\cite{RSc},
for H\" older actions~\cite{Mal}, and for quasi-conformal actions~\cite{Mar}.

A quite deep but not yet successful line of research on the Hilbert-Smith Conjecture is known as the
orbit space method. For an effective action of a compact group $G$ on a space $X$ the formula for the dimension of the orbit space $$\dim X/G=\dim X-\dim G$$ seemed quite natural.
Since $A_p$ is homeomorphic to the Cantor set and hence, $\dim A_p=0$, one would expect the dimension of the orbit space $M/A_p$ of an effective action of an
$n$-manifold to be equal $n$. Contrary to that P.A. Smith found in 1940~\cite{Sm}
that this dimension is not $n$. Later
C.T. Yang proved~\cite{Y1} that the cohomological dimension $\dim_{\Z}M/A_p$ of the orbit space of an effective $p$-adic action on an $n$-manifold $M$ equals $n+2$. Therefore, by Alexandroff's theorem~\cite{A},\cite{Wa} about the coincidence of
cohomological and covering dimensions in the case when the latter is finite  either
$\dim M/A_p=n+2$ or $\dim M/A_p=\infty$.
The other surprising  properties of the orbit spaces of  a hypothetical $p$-adic action on a manifold can be found in~\cite{BRW},
\cite{R},\cite{RW},\cite{Wi},\cite{Y2}. Still there is a remote hope that those bizarre properties of the orbit space $M/A_p$
could lead to a contradiction and prove the Hilbert-Smith Conjecture.

In this paper we consider the Hilbert-Smith Conjecture under the assumption that the dimension of the orbit space is finite.
We consider only free actions.

\begin{conjec}[Weak Hilbert-Smith conjecture]
 If a compact group $G$ acts freely on a manifold $M$  and dimension of the orbit space $M/G$ is finite, then $G$ is a Lie group.
 \end{conjec}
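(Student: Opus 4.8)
The plan is to argue by contradiction through the standard reduction to the $p$-adic group. First I would invoke the structure theory of compact groups: a compact group that fails to be Lie contains a closed subgroup isomorphic to $A_p$ for some prime $p$, and the restriction of a free $G$-action to such a subgroup is again free. Since $G$ acts freely, every orbit is homeomorphic to $G$ and embeds in the $n$-manifold $M$, so $\dim G\le n$; hence the fibers $G/A_p$ of the natural surjection $M/A_p\to M/G$ are finite-dimensional and compact, and a Hurewicz-type estimate keeps $\dim M/A_p$ finite. Thus the whole problem reduces to showing that $A_p$ cannot act freely on a manifold $M^n$ with $\dim M/A_p<\infty$.

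Writing $X=M/A_p$, the next step is to pin down the dimensions of $X$. By Yang's theorem $\dim_{\Z}X=n+2$, and since we assume $\dim X<\infty$, Alexandroff's theorem forces the covering dimension to agree: $\dim X=n+2$. To see what other coefficients give, I would run the Borel spectral sequence $H^s(BA_p;H^t(M))\Rightarrow H^{s+t}(X)$ of the free action (for which equivariant cohomology is just $\check H^*(X)$), using that $H^*(BA_p;\Z)$ is $\Z$ in degree $0$, the Pr\"ufer group $\Z(p^\infty)$ in degree $2$, and $0$ otherwise. This two-column spectral sequence recovers $\dim_{\Z}X=n+2$ and yields $\dim_{\Z/p}X=\dim_{\Z(p^\infty)}X=n+1$ and $\dim_{\Q}X=n$. \emph{These values satisfy every Bockstein inequality}, so no contradiction can be extracted from cohomological dimension alone — the finiteness hypothesis must be used in a way that sees the covering dimension directly.

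Hence the decisive step is extension-theoretic. Covering dimension $\dim X=n+2$ is equivalent to the failure of $S^{n+1}$ to be an absolute extensor for $X$, i.e. to the existence of a closed $A\subset X$ and a map $A\to S^{n+1}$ that does not extend over $X$. Since $M$ is an $n$-manifold, hence an ANR with $\dim M=n$, the composite $\pi^{-1}(A)\to A\to S^{n+1}$ \emph{does} extend over $M$. The plan is to transfer this extension downstairs through the orbit map $\pi\colon M\to X$, which would make $S^{n+1}$ an absolute extensor for $X$ and force $\dim X\le n+1$, contradicting $\dim X=n+2$. To organize the transfer I would use the tower $M=\varprojlim_k M/(p^kA_p)$, in which each $M_k:=M/(p^kA_p)\to X$ is a finite regular $\Z/p^k$-covering (a local homeomorphism, so $\dim M_k=n+2$) and each $M_{k+1}\to M_k$ is a $\Z/p$-covering: lift the given map to the covers, extend it where the manifold $M$ controls dimension, and attempt to descend by a controlled limiting argument that exploits the finiteness of $\dim X$.

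The main obstacle is precisely this transfer. The orbit map of a free $A_p$-action is \emph{not} locally trivial — were it so, $M$ would be locally a product of an open set with a Cantor set and could not be a manifold — so there is no slice to average over, and pushing an $S^{n+1}$-valued extension past the Cantor-set orbits is exactly the local-structure difficulty at the heart of the Hilbert--Smith problem. What distinguishes the present finite-dimensional situation from the full conjecture, and what I would try to exploit, is that the finiteness of $\dim X$ bounds the relevant obstruction groups and makes the inverse system of finite covers controllable: the crux is to show that the primary obstruction to descending the extension, which lies in $H^{n+2}(X,A;\Z)$ — the group into which the action injects a copy of $\Z(p^\infty)$ — must vanish once $\dim X$ is finite. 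Securing that vanishing, equivalently ruling out that the orbit map raises covering dimension from $n$ to exactly $n+2$, is where essentially all of the difficulty is concentrated.
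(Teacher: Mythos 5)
Your proposal is honest about where it stops, but what it stops short of is not a technical lemma --- it is the entire statement. Once you have invoked Yang's theorem and Alexandroff's theorem (which together consume the finiteness hypothesis completely, giving $\dim X=n+2$ for $X=M/A_p$), the step you defer --- showing that every map $A\to S^{n+1}$, $A\subset X$ closed, extends over $X$, i.e.\ that the obstruction in $H^{n+2}(X,A;\Z)$ vanishes --- is verbatim the assertion $\dim X\le n+1$, and its negation is exactly what Yang's theorem guarantees for any such action. So the hoped-for vanishing ``once $\dim X$ is finite'' cannot be extracted from finiteness: finiteness has already been spent, and $\dim_{\Z}X=n+2$ means precisely that some such obstruction group is nonzero and is realized by a non-extendable map; the obstructions can only all vanish if the action does not exist, which is what you are trying to prove. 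Your tower mechanism gives no foothold either: even for closed $M$, where $M=\varprojlim M_k$ with $M_k=M/p^kA_p$ and where the extension over $M$ factors up to homotopy through some finite stage because $S^{n+1}$ is an ANR, each $M_k$ is itself the orbit space of a free action of $p^kA_p\cong A_p$ on $M$, hence has $\dim_{\Z}M_k=n+2$ by the very same theorem of Yang. Thus at every finite level the extension problem is exactly as obstructed as at level zero, and the descent $M_k\to X$ is a problem of the same type as the original; the dimension drop from $n+2$ to $n$ happens only in the inverse limit, where there is no way to make the extension invariant under the Cantor-group orbits (the target $S^{n+1}$ admits no averaging). In short, the proposal reduces the conjecture to itself.

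Two further points. First, your opening reduction contains a false claim: not every compact non-Lie group contains a closed subgroup isomorphic to $A_p$ --- the product $\prod_q\Z_q$ over all primes $q$ is compact, infinite, and non-Lie, yet contains no copy of $A_p$. The standard reduction \cite{Sm2} establishes the equivalence of Hilbert--Smith with the case $G=A_p$ by disposing of the remaining small-subgroup cases (infinite products of finite groups) via Newman's theorem, not via an embedded $p$-adic subgroup; this is repairable but wrong as written. Second, you should be aware that the paper does not prove this statement either: it is posed as a conjecture, and the paper's contribution is a \emph{conditional} reduction along an entirely different route. Assuming a positive solution of the Essential Lens Sequence Problem (Problem~\ref{Pr2}), one obtains a rough classifying space $B$ for $A_p$ with unbounded cup-length in reduced $K$-theory (Corollary~\ref{inf-prod}); assuming the Injectivity Conjecture (Conjecture~\ref{WCo1}, itself reduced further to the Hurewicz Fibration Problem or to the ANE-ness of the moduli space $\mathcal M(F)$), a product of length $n+3$ pulls back nontrivially to the Borel construction $M\times_{A_p}E$; since $p_E:M\times_{A_p}E\to M/p^kA_p$ is cell-like with finite-dimensional base, it is a shape equivalence, hence a $K$-theory isomorphism, which produces cup-length at least $n+3$ on a compactum of dimension $n+2$, contradicting Theorem~\ref{dim-cup}. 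Your obstruction-theoretic attack, had it succeeded, would have been unconditional and more elementary than the paper's program; as it stands, neither your key step nor any substitute for it is supplied.
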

 
We reduce the weak Hilbert-Smith conjecture to two problems
 which we call the {\em Essential Lens Sequence} problem (Problem~\ref{Pr2}) and the {\em  Injectivity Conjecture}
 (Conjecture~\ref{C1}).
 The reduction is based on the idea of Williams to use the infinite product in the K-theory
 of some special version of a classifying space $BA_p$ for  the group $A_p$ ~\cite{Wi}.
We should warn the reader that in the definition of classifying spaces for $A_p$ in~\cite{Wi} the order of direct and inverse limit must be exchanged. 

The Essential Lens Sequence problem is a quest for compact 'classifying space' (we call it here a {\em rough classifying space}) which has
an infinite product in K-theory  and hence infinite dimensional. A finite dimensional compact rough classifying space for $A_p$ was constructed by Floyd~\cite{Wi}. Existence of such an infinite dimensional rough classifying space together with Borel's construction would bring 
examples of cell-like maps which are Hurewicz fibrations and yet have nontrivial cokernels in K-theory.
This looks surprising but it does not contradict to any known facts about cell-like maps. Thus, it is known that cell-like maps of manifolds
can have nontrivial kernels in homology K-theory~\cite{DFW}.

The Injectivity Conjecture is a technical statement about Hurewicz fibrations with the fiber a closed  manifold $M$
over a fixed base $B$. It states that for a fixed nonzero generalized cohomology class $\alpha\in h^*(B\times M)$ for a sufficiently
close approximation $f:E\to B\times M$ of the trivial fibration $\pi:B\times M\to B$ by a Hurewicz fibration $p:E\to B$ with the fiber $M$ the map $f$ takes $\alpha$ to a nonzero element $f^*(\alpha)$. We apply the Injectivity Conjecture when $h^*$ is the reduced K-theory. We note that for general spaces $B$ the concept of Hurewicz fibration is a peculiar one,
since $B$ does not necessarily support interesting homotopies. There is a seemingly weaker notion of a 
completely regular map which does not appeal to homotopy in its definition. It is still unknown (the Hurewicz Fibration Problem)
whether every completely regular map is a Hurewicz fibration.
We conclude the paper by reductions of the Injectivity Conjecture first
to  the Hurewicz Fibration Problem and then to the ANRness problem of the classifying space for completely regular maps with a given manifold fiber.

\begin{thm}
A positive solutions to both the Essential Lens Sequence Problem and the Hurewicz  Fibration Problem imply the
Weak Hilbert-Smith Conjecture for closed aspherical manifolds.
\end{thm}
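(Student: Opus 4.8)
The plan is to argue by contradiction, chaining together the two reductions sketched in the Introduction: a positive answer to the Essential Lens Sequence Problem supplies a \emph{compact}, infinite-dimensional rough classifying space carrying an infinite product in K-theory, while a positive answer to the Hurewicz Fibration Problem lets one invoke the Injectivity Conjecture, whose hypotheses are satisfied by the orbit projection of a $p$-adic action. So suppose $A_p$ acts freely on a closed aspherical manifold $M$ with $\dim M/A_p<\infty$; I will show this is impossible, whence any compact group acting freely must be a Lie group.

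First I would set up the classifying-space picture via Borel's construction. Since the action is free, the bundle projection $M\times_{A_p}EA_p\to M/A_p$ has contractible fibre $EA_p$ and thus exhibits $M/A_p$ (up to homotopy) as the total space of an $M$-bundle over $BA_p$, together with a classifying map $g\colon M/A_p\to BA_p$. Asphericity of $M$ is what I would use to realize this homotopical picture by honest maps of compacta with closed-manifold fibre, matching the trivial-fibration approximation $\pi\colon B\times M\to B$ that appears in the Injectivity Conjecture. Invoking a positive solution to the Essential Lens Sequence Problem, I replace the infinite-dimensional CW model $BA_p$ by a compact rough classifying space $Y$ carrying a class $\xi=\prod_n x_n\in\widetilde K^*(Y)$ that is a genuine infinite product; essentiality of the lens sequence guarantees that each factor $x_n$ is nonzero and, crucially, that $x_n$ lies in Atiyah--Hirzebruch filtration tending to infinity with $n$.

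Next I would check that the orbit projection $q\colon M\to M/A_p$ (and its Borel approximation) is a completely regular map, so that a positive answer to the Hurewicz Fibration Problem promotes $q$ to a Hurewicz fibration with closed fibre $M$ — precisely the situation governed by the Injectivity Conjecture. Applying that conjecture to the pullback of $\xi$ then yields $g^*(\xi)\neq 0$ in $\widetilde K^*(M/A_p)$. On the other hand, finiteness of $\dim M/A_p$ (indeed $\dim M/A_p=n+2$ by Yang's theorem in this case) bounds the length of the Atiyah--Hirzebruch filtration of $M/A_p$, forcing $g^*(x_n)=0$ for all large $n$ and hence $g^*(\xi)=0$. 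This contradiction rules out the free action and establishes the Weak Hilbert--Smith Conjecture in the aspherical case.

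The hard part will be the interface in the middle. One must match the abstract infinite-product class produced by the Essential Lens Sequence Problem with a geometrically realized approximation $f\colon E\to B\times M$ to which the Injectivity Conjecture — through its reduction to the Hurewicz Fibration Problem — genuinely applies, and confirm that the orbit projection is completely regular with the correct closed-manifold fibre. The two ends of the argument must then meet exactly: homotopy-theoretic injectivity forcing $g^*(\xi)\neq 0$ on one side, and the dimension-theoretic vanishing of high-filtration classes forcing $g^*(\xi)=0$ on the other. Controlling the growth of the Atiyah--Hirzebruch filtration of the essential factors $x_n$ against the finite dimension of the orbit space is the quantitative heart of the contradiction, and is where I expect the main technical difficulty to lie.
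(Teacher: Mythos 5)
Your overall architecture---argue by contradiction, use the Essential Lens Sequence Problem to produce a compact rough classifying space $B$ with rich $K$-theory, use injectivity to force a nonzero pullback, and use finiteness of $\dim M/A_p$ to force that same class to vanish---is indeed the paper's architecture. But there is a genuine gap at the step you state as ``a positive answer to the Hurewicz Fibration Problem lets one invoke the Injectivity Conjecture.'' In the paper this implication is \emph{not} formal: the Hurewicz Fibration Problem yields the Injectivity Conjecture only in combination with a second ingredient, a deformation $h_t$ of $A_p\subset Homeo(F)$ to the identity \emph{through group homomorphisms} (Question~\ref{contraction}). The mechanism is: let $h_t(A_p)$ act on $F\times\{t\}$, form the Borel construction to get a map $p:(F\times[0,1])\times_{A_p}E\to B\times[0,1]$ with fiber $F$, check that $p$ is completely regular, conclude (given the Hurewicz Fibration Problem) that $p$ is a Hurewicz fibration, and then slide the trivialization from the end where the action has become trivial to the other end, obtaining a fiberwise splitting $F\times B\to F\times_{A_p}E$ over $B$, from which injectivity follows. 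Asphericity enters the paper \emph{exactly} here and nowhere else: a free $A_p$-action on a closed aspherical manifold induces a uniformly bounded action on the universal cover, such an action extends to $D^n$ fixing $\partial D^n$, and Alexander's trick (Proposition~\ref{defo}) then supplies the required deformation through homomorphisms. Your proposal instead spends asphericity on a vague ``realization of the homotopical picture,'' so as written your argument never uses asphericity in any essential way and would prove the conclusion for \emph{all} closed manifolds---something the stated hypotheses cannot deliver. That missing bridge (Hurewicz Fibration Problem $+$ deformation $\Rightarrow$ Injectivity, with asphericity furnishing the deformation) is the core of the theorem.

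Three further points. First, the map your machinery must apply to is not the orbit projection $q:M\to M/A_p$: its fibers are orbits, i.e.\ copies of the Cantor set $A_p$, not copies of $M$. The completely regular map with fiber $M$ is the Borel projection $p_M:M\times_{A_p}E\to B$, where $A_p$ acts on $M$ through a deep subgroup $p^kA_p$. Second, your identification of $M/A_p$ ``up to homotopy'' with an $M$-bundle total space over $BA_p$ via contractibility of $EA_p$ is not available in this category: $B$ and $M/A_p$ are compacta with no CW structure, and a fibration with contractible (or cell-like) fibers over such a base need not be a homotopy equivalence. The paper instead uses that the other Borel projection $p_E:M\times_{A_p}E\to M/p^kA_p$ is a cell-like map (its fibers are copies of the cell-like compactum $E$ of Proposition~\ref{classif}) and invokes Lacher's theorem: a cell-like map onto a \emph{finite-dimensional} compactum is a shape equivalence, hence induces an isomorphism in $K$-theory. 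This is the second, indispensable, use of $\dim M/A_p<\infty$, and it is what legitimately transports the nonzero class from $M\times_{A_p}E$ to $M/p^kA_p$. Third, the ELS Problem gives unbounded \emph{finite} cup-length in $\tilde K^0(B)$ (Corollary~\ref{inf-prod}), not a convergent infinite product: the paper takes a product $\alpha_1\cdots\alpha_{n+3}$ of length $n+3$, notes $\dim M/p^kA_p=n+2$ by Yang's theorem, and contradicts Theorem~\ref{dim-cup} (cup-length $\le$ dimension for reduced generalized cohomology of compacta, proved via LS-category). Your Atiyah--Hirzebruch filtration mechanism is a plausible substitute for Theorem~\ref{dim-cup}, but it is misstated: the individual factors $(\bar\eta_{k_i}-1)$ have bounded filtration; it is only the long products whose filtration grows, which is just the cup-length bound again.
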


\begin{thm}
If the Essential Lens Sequence Problem has positive solution and the classifying space for completely regular maps with a given compact $Q$-manifold is an absolute neighborhood extensor for  compact metric spaces,
then the
Weak Hilbert-Smith Conjecture holds true.

\end{thm}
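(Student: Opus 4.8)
\emph{Sketch of the intended argument.} The plan is to establish the theorem by chaining three reductions: the main reduction of the Weak Hilbert-Smith Conjecture to the Essential Lens Sequence Problem together with the Injectivity Conjecture, followed by a reduction of the Injectivity Conjecture to the Hurewicz Fibration Problem, and finally a reduction of the Hurewicz Fibration Problem to the hypothesized ANRness of the classifying space. Throughout I argue by contradiction: assuming $A_p$ acts freely on a manifold $M$ with $B := M/A_p$ finite-dimensional, I aim to produce a nonzero infinite product in the reduced K-theory of a finite-dimensional space, which cannot exist.

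First I would invoke Williams' framework. A positive solution to the Essential Lens Sequence Problem furnishes a compact rough classifying space $W$ for $A_p$ carrying a nonzero infinite product $u\in\widetilde K^{*}(W)$; compactness together with such a product forces $W$ to be infinite-dimensional. The free action, filtered through the finite quotients $A_p/p^{n}A_p\cong\Z/p^{n}$, yields the associated bundle $E=M\times_{A_p}M\to B$ with fiber $M$, together with a fiberwise comparison map $f\colon E\to B\times M$ to the trivial fibration $\pi$. Because $A_p$ is totally disconnected and acts through these finite quotients, $f$ is a completely regular map that approximates $\pi$ arbitrarily finely, and the lens-sequence structure realizes $u$, pulled back along the classifying data, as a class $\alpha\in\widetilde K^{*}(B\times M)$.

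The contradiction then arises from two incompatible assertions about the survival of this infinite product. On one side, $B=M/A_p$ and hence $B\times M$ are finite-dimensional, so by nilpotency of positive-filtration classes the infinite product must vanish after transport into $\widetilde K^{*}(E)$; defeating this vanishing is the whole point. On the other side, the Injectivity Conjecture asserts precisely that $f^{*}(\alpha)\neq0$ for a sufficiently fine approximation of $\pi$ by a Hurewicz fibration. To make the Injectivity Conjecture available I reduce it: since $f$ is completely regular, a positive answer to the Hurewicz Fibration Problem makes $f$ a Hurewicz fibration, and a Hurewicz fibration with fiber $M$ approximating $\pi$ closely enough is fiber-homotopy equivalent to $\pi$, so $f$ induces an injection on $\widetilde K^{*}$, giving the Injectivity Conjecture. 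To secure the Hurewicz Fibration Problem I then invoke the ANRness hypothesis: completely regular maps with a fixed fiber are classified by maps into a classifying space, and if that space is an absolute neighborhood extensor for compact metric spaces, the universal completely regular map admits the extensions of partial lifts demanded by the homotopy lifting property, so every such completely regular map is a Hurewicz fibration.

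The point that separates this theorem from the aspherical case of the preceding theorem is that the ANRness is hypothesized for a compact $Q$-manifold fiber rather than for $M$ itself. I would therefore stabilize, replacing the fiber $M$ by $M\times Q$: since $Q$ is contractible this does not change the reduced K-theory in which the contradiction is detected, while it turns the fiber into a compact $Q$-manifold to which the ANRness hypothesis applies, so the corresponding comparison map is a Hurewicz fibration and the argument runs without any asphericity assumption on $M$. I expect the main obstacle to be exactly this passage between the K-theoretic core and its point-set input---verifying that the associated bundle of a Cantor-group action is genuinely completely regular, that the classifying-space extension property transfers liftings to the specific comparison map $f$, and that the $Q$-manifold stabilization is arranged so that the finiteness of $\dim(M/A_p)$ remains the feature that forces the contradiction.
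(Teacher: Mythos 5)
Your chain of reductions is not the paper's, and both of its middle links have genuine gaps. First, the step ``ANRness of the classifying space $\mathcal M(F)$ $\Rightarrow$ every completely regular map with fiber $F$ is a Hurewicz fibration'' is unsupported: being an absolute neighborhood extensor lets you extend maps \emph{into} $\mathcal M(F)$ from closed subsets of compacta, which is an extension property of the base, whereas the homotopy lifting property requires lifting homotopies against the universal map $\nu_F:\mathcal E(F)\to\mathcal M(F)$, i.e.\ producing maps into the total space. Since every completely regular map is a pullback of $\nu_F$ (Proposition~\ref{universal}), your claim would amount to solving the Hurewicz Fibration Problem, which the paper leaves open even in the presence of the ANRness hypothesis. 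Second, the step ``Hurewicz Fibration Problem $\Rightarrow$ Injectivity Conjecture'' via the assertion that a Hurewicz fibration sufficiently close to the trivial one is fiber-homotopy equivalent to it is exactly the Parametrized $\alpha$-Approximation Conjecture, which the paper explicitly calls out of reach: a Dold-type argument (fiberwise homotopy equivalence $\Rightarrow$ fiber homotopy equivalence) needs a numerably contractible base, and the relevant base here --- the rough classifying space $B=\lim_{\leftarrow}L^{n_i}(p^{k_i})$ --- is compact, connected and locally connected but not locally contractible, so it ``does not support interesting homotopies.'' In the paper the Hurewicz Fibration Problem yields the Injectivity Conjecture only together with the extra hypothesis of Question~\ref{contraction} (deformation of $A_p$ through homomorphisms), and via Alexander's trick that route only covers uniformly bounded actions, i.e.\ the aspherical case (Theorem 1 of the introduction), not the theorem you are proving.

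The paper's actual argument bypasses the Hurewicz Fibration Problem entirely and uses the ANRness hypothesis directly to prove the Injectivity Conjecture (for $Q$-manifold fibers, which suffices after your correct stabilization remark $M\rightsquigarrow M\times Q$). Assuming the Injectivity Conjecture fails, one takes the offending sequence of fibrations $p_k:E_k\to B$ and fiberwise $\epsilon_k$-maps $f_k$ with $f_k^*\pi^*(\alpha)=0$, compactifies $\coprod E_k$ by $M$ using the graphs of $\pi_M\circ f_k$, checks via Theorems~\ref{iso-reg} and~\ref{alf} that the result is a single completely regular map over $\alpha(\coprod B_k)$, classifies it by a map into $\mathcal M(M)$, and then uses the ANE property to extend that classifying map over a neighborhood in the polyhedral compactification $\alpha(\coprod T_k)$, where $B=\lim_{\leftarrow}L_i$ and $T$ compactifies $\coprod L_i$ by $B$; a continuity argument in $h^*$, combined with Dyer--Hamstrom local triviality over the finite-dimensional polyhedra $L_j$ and the null-homotopy of the extension on some $T_k$, produces the contradiction. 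Separately, your description of the main reduction is garbled in ways that matter: the Borel construction is $M\times_{A_p}E$ with $E$ the cell-like inverse limit of spheres over the rough classifying space (not $M\times_{A_p}M$, and not over $M/A_p$); the nonzero product $\alpha_1\cdots\alpha_{n+3}$ lives in $\widetilde K^0(B)$ for the lens-space limit $B$, survives under $p_M^*$ by the Injectivity Conjecture, and is then transported into $K^0(M/p^kA_p)$ because the cell-like projection $p_E$ onto the finite-dimensional space $M/p^kA_p$ is a shape equivalence; the contradiction is with Theorem~\ref{dim-cup} (cup-length $\le$ dimension, with $\dim M/p^kA_p=n+2$ by Yang's theorem), not with nilpotency in $\widetilde K^*(E)$ or in $K^*(B\times M)$.
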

 
Our approach to the Hilbert-Smith Conjecture does not exclude a possibility of a $p$-adic action on a manifold with an infinite dimensional orbit space. On the other hand still there is no known examples of   $p$-adic actions on a finite dimensional
compact space with an infinite dimensional orbit space. Though there are examples of such actions of Cantor groups~
 \cite{DW},\cite{Le}.

\section{Preliminaries}

\subsection{$p$-Adic actions}
By $\Z_m=\Z/m\Z$ we denote the cyclic group of oder $m$. 
Let $p$ be a prime number.
The $p$-adic integers 
is a topological group defined as the inverse limit of the sequence
$$
\Z_p\leftarrow \Z_{p^2} \leftarrow\Z_{p^3}\leftarrow\dots 
$$
where every bonding map $\Z_{p^{k+1}}\to\Z_{p^k}$ is the  mod $p^k$ reduction.
Note that every closed subgroup of $A_p$ has the form $p^kA_p$. If the group $A_p$ acts on a 
compact metric space $X$, then $X$ can be presented as the limit space
of the inverse sequence
$$
\begin{CD}
(*)\ \ \ \ \ Y_0 @<q^1_0<< Y_1 @<q^2_1<< Y_2 @<q^3_2<< Y_3 @<<<\dots
\end{CD}
$$
with  $Y_0=X/A_p$ and each space $Y_k$ equals to the orbit space $X/p^kA_p$ of the action of the subgroup $p^kA_p$, all the bonding maps $q^{k+1}_k$ are the projection to the orbit space of a $\Z_p$-action, and every composition $$q^{k+i}_k=q^{k+1}_k\circ q^{k+2}_{k+1}\dots\circ q^{k+i}_{k+i-1}:Y_{k+i}\to Y_k$$ is the projection onto the orbit space of an action of the quotient group $\Z_{p^i}=p^kA_p/p^{k+i}A_p$.

\begin{prop}
Suppose that the group of $p$-Adic integers $A_p$ acts freely on a connected and locally connected
compact metric space $X$ with the orbit space $Y$.
Then $X$ and $Y$ can be presented as the inverse limit of sequences of simplicial complexes such that there is a commutative diagram
$$
\begin{CD}
K_0 @<\phi^1_0<< K_1 @<\phi^2_1<< K_2 @<\phi^3_2<<  K_3@<<< \cdots\ \ @. X\\
@Vp_0VV @Vp_1VV @Vp_2VV @Vp_3VV @. @Vq_0VV\\
L_0 @<\psi^1_0<< L_1 @<\psi^2_1<< L_2 @<\psi^3_2<< L_3 @<<<  \cdots\ \  @. Y\\
\end{CD}
$$
where each $p_k$ is the projection onto the orbit space of a free action of $\Z_{p^k}$,
each bonding map $\phi^k_{k-1}$ is $\Z_{p^k}$-equivariant.
\end{prop}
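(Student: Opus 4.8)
The plan is to realize the inverse sequence $(*)$ combinatorially by nerves of suitably chosen open covers, carrying the residual finite group actions along. The starting observation is that for each $k$ the space $Y_k=X/p^kA_p$ carries a free action of the finite quotient $\Z_{p^k}=A_p/p^kA_p$, whose orbit map is exactly $p_k\colon Y_k\to Y_0=Y$. Since $\Z_{p^k}$ is finite and acts freely on the compact Hausdorff space $Y_k$, this orbit map is a regular covering with deck group $\Z_{p^k}$; and because orbit maps of group actions are open, local connectedness (and connectedness) of $X$ passes to $Y$ and to every $Y_k$. Hence $Y$ admits arbitrarily fine finite open covers adapted to the coverings $p_k$.

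First I would build the base sequence on $Y$. Choose finite open covers $\mathcal W_1,\mathcal W_2,\dots$ of $Y$ with $\mathcal W_k$ refining $\mathcal W_{k-1}$ and $\operatorname{mesh}\mathcal W_k\to0$, arranged so that the star of each member of $\mathcal W_k$ lies in an open set evenly covered by $p_k$; this is possible by fineness together with local connectedness. Set $L_k=N(\mathcal W_k)$. The refinements induce simplicial projections $\psi^k_{k-1}\colon L_k\to L_{k-1}$, and the mesh condition gives $\varprojlim L_k\cong Y$ by the standard description of a compact metric space as the limit of nerves of a refining sequence of covers.

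Next I would lift. Let $\mathcal V_k$ be the cover of $Y_k$ whose members are the sheets of $p_k^{-1}(W)$ for $W\in\mathcal W_k$; it is $\Z_{p^k}$-invariant, and $\Z_{p^k}$ permutes the sheets over each $W$ freely and transitively. Put $K_k=N(\mathcal V_k)$ and let $p_k\colon K_k\to L_k$ send a sheet to its base. The key point, which I would verify from the even-covering hypothesis, is that $\Z_{p^k}$ then acts freely and simplicially on $K_k$ with $K_k/\Z_{p^k}=L_k$: freeness on simplices is automatic since distinct sheets over the same $W$ are disjoint, so no simplex contains two sheets over one base; and two simplices of $K_k$ lying over the same simplex of $L_k$ differ by a deck transformation because, over the evenly covered set containing the relevant star, a common lift selects each sheet and $\Z_{p^k}$ is transitive on sheets. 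To obtain the bonding maps I would make $\mathcal W_k$ fine enough that $\mathcal V_k$ refines the pullback $(q^k_{k-1})^{-1}\mathcal V_{k-1}$; choosing a refinement projection on one sheet from each $\Z_{p^k}$-orbit and extending equivariantly — possible precisely because the action on sheets is free — yields $\Z_{p^k}$-equivariant simplicial maps $\phi^k_{k-1}\colon K_k\to K_{k-1}$ with $p_{k-1}\phi^k_{k-1}=\psi^k_{k-1}p_k$, so each square commutes.

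Finally I would identify the limits. Since $X=\varprojlim Y_k$ and the combined mesh of the $\mathcal V_k$, measured back in $X$, tends to zero — both because the covers downstairs shrink and because the orbits of $p^kA_p$ shrink as $k\to\infty$ — the coherent system of nerves satisfies $\varprojlim K_k\cong X$, and passing to the limit in the commuting squares turns the $\Z_{p^k}=A_p/p^kA_p$ orbit maps $p_k$ into the $A_p$-orbit map $q_0\colon X\to Y$. I expect the main obstacle to be the simultaneous bookkeeping in the lifting step: one must choose the covers fine enough to be good for the whole tower of finite coverings $Y_k\to Y$ at once, so that the nerves of the lifted covers are genuine simplicial $\Z_{p^k}$-coverings with no folding and refine compatibly through the bonding maps, while still forcing the two inverse limits to be exactly $X$ and $Y$. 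This is precisely where connectedness and local connectedness are indispensable.
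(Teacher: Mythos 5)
Your proposal is correct and follows essentially the same route as the paper: nerves of fine covers of $Y$ with connected, evenly covered members, lifted to the sheet covers of the spaces $Y_k=X/p^kA_p$, whose nerves $K_k$ carry free simplicial $\Z_{p^k}$-actions with quotient $L_k$, equivariant bonding maps, and inverse limits recovering $X$ and $Y$. The only difference is technical rather than conceptual: the paper produces the bonding maps $\phi^k_{k-1}$ from the canonical partition-of-unity projections $\psi'_k\colon Y_k\to K_k$ (by checking that the multivalued composition $\psi'_{k-1}\circ(\psi'_k)^{-1}$ is single-valued), whereas you construct them combinatorially as $\Z_{p^k}$-equivariant refinement projections on sheets.
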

\begin{proof}
We take $Y=Y_0$ from $(*)$ and construct the inverse sequence $\{L_i,\psi^i_{i-1}\}$ using nerves
of a sequence of finite open covers $\{\mathcal U_i\}$ with $mesh\ \mathcal U_i\to 0$.
Since $Y$ is locally connected, we may assume that all sets in each $\mathcal U_i$ are connected.
Thus, $L_i=Nerve(\mathcal U_i)$. Let $\psi_i:Y\to L_i$ denote the projection to the nerve.
We recall that it is defined by means of a partition of unity subordinated by $\mathcal U_i$

We may assume that  each $U\in\mathcal U_i$ 
admits a section of $q^i_0$. Thus, the preimage
$(q_0^i)^{-1}(U)$ is a disjoint union of $p^i$ copies of $U$. These copies of $U$ define a finite open cover $\mathcal V_i$ of $Y_i$. Let $K_i=Nerve(\mathcal V_i)$. Note that there is a simplicial map $q_i:K_i\to L_i$ which is the projection onto the orbit space of $\Z_{p^i}$-action. Moreover, there is a map $\psi_i':Y_i\to K_i$ which defines a pull-back diagram
$$
\begin{CD}
K_i @<\psi'_i<< Y_i\\
@Vp_iVV @Vq^i_0VV\\
L_i @<\psi_i<< Y.
\end{CD}
$$
It is an easy exercise to show that the multi-valued upper semi-continuous map $F:K_i\to K_{i-1}$
defined by the formula $F(x)=\psi'_{i-1}(\psi'_i)^{-1}$ is in fact single-valued. Thus $F$ defines a continuous map
$\phi^i_{i-1}$. We show that the square diagram in our sequence commutes $p_{i-1}\phi^i_{i-1}=\psi^i_{i-1}p_i$:
$$
p_{i-1}\phi^i_{i-1}=p_{i-1}F(x)=p_{i-1}\psi'_{i-1}((\psi'_i)^{-1}(x))
=\psi_{i-1}q^i_0((\psi'_i)^{-1}(x))$$
$$=\psi_{i-1}(\psi_i^{-1}(p_i(x)))=\psi^i_{i-1}p_i(x).$$

Clearly, $\lim_{\leftarrow}\{K_i,\phi^i_{i-1}\}=X$.
\end{proof}

\subsection{Borel construction}
Let a group $G$ act on spaces $X$ and $E$ with the projections onto the orbit spaces $q_X:X\to X/G$ and $q_E:E\to E/G$. Let 
$q_{X\times E}:X\times E\to X\times_G X=(X\times E)/G$ denote the projection to the orbit space of the diagonal action of $G$ on $X\times E$. Then there is a commutative diagram
called the {\em Borel construction}~\cite{AB}:
$$
\begin{CD}
X @<pr_X<< X\times E @>pr_E>> E\\
@Vq_XVV @ Vq_{X\times E}VV @Vq_EVV\\
X/G @<p_E<< X\times_GE @>p_X>> E/G.\\
\end{CD}
$$
If $G$ is compact, the actions are free, and $q_E$ is locally trivial bundle, then so is $p_X$.
In particular, this holds true for free actions of compact Lie groups. Since the projection of the  limit
space of an inverse sequence whose bonding maps are locally trivial fibrations onto the first space 
of the sequence is a Hurewicz fibration, using approximation of a compact group by  compact Lie groups, we obtain that in the case of free $G$-actions all projections in the diagram are Hurewicz fibrations. 
The fiber $p_X^{-1}(y)$ is homeomorphic to $X/I_z$ where $I_z=\{g\in G\mid g(z)=z\}$ is the isotropy group of $z\in q_E^{-1}(y)$.

\section{Essential sequences of lens spaces}

For an integer $m$ we denote the standard $(2n-1)$-dimensional lens space mod $m$ by
$$
L^n(m)=S^{2n-1}/\Z_{m}$$ 
where the $\Z_m$-action on $(2n-1)$-sphere $S^{2n-1}\subset\C^n$ is obtained from the rotation by $2\pi/m$ in every coordinate plane $\C$ in
$\C^n$.

Note that the classifying space $B\Z_{m}=K(\Z_{m},1)$ can be presented as the increasing union $\cup_nL^n(m)$.

We call a map between lens spaces $q:L^m(p^k)\to L^n(p^{\ell})$ {\em essential} if it induces an epimorphism of the fundamental groups. 
A sequence of mappings of spheres
$$
\begin{CD}
S^{k_0}  @<f^1_0<< S^{k_1} @<f^2_1<< S^{k_2} @<f^3_2<<\dots
\end{CD}
$$
is called {\em inessential} if for every $i$ there is $j$ such the $f^{i+1}_i\circ\dots\circ f^{i+j}_{i+j-1}$ is null-homotopic.

\begin{prob}[Essential Lens  Sequence Problem]\label{Pr2}
Given an odd prime $p$, does there exist an infinite sequence of lens spaces
$$
\begin{CD}
(1)\ \ \ \ \ L^{n_0}(p^{k_0}) @<q^1_0<< L^{n_1}(p^{k_1})@<q^2_1<< L^{n_2}(p^{k_2}) @<q^3_2<<\dots
\end{CD}
$$
with essential bonding maps, $k_i\to\infty$, and $n_i>p^{k_i-k_0}$, such that the sequence of covering spheres
$$
\begin{CD}
 (2)\ \ \ \ \ \ \ \ \ \ \ \ S^{2n_0-1}@<\tilde q^1_0<< S^{2n_1-1}@<\tilde q^2_1<< S^{2n_2-1}@<\tilde q^3_2<<\dots
\end{CD}
$$
is inessential\ ?
\end{prob}

It is known (see Section 4) that $d(c)$, the dimension of a lens space in an essential sequence as a function of the cardinality of its fundamental group,
can grow at most linearly. Floyd's example \cite{Wi} gives us an essential sequence of lens spaces with constant function $d(c)=3$. Using ideas from~\cite{Dr} it is possible to
construct an essential sequence with $d(c)\sim \log c$. It turns out that for our applications to the Hilbert-Smith conjecture we need $d(c)$ to be linear.
This requirement is spelled out in the condition: $n_i>p^{k_i-k_0}$.

The ELS problem can be stated for concrete values of $k_i$ and $n_i$:
$$ 
(1)\ \ \  k_i=i+1\  \ \ \ \ \ \ \  \text{and}\ \ \ \ \ \ n_i=p^{i}+1;
$$
$$ 
(2)\ \ \     k_i=i+1\  \ \ \ \ \ \ \   \text{and}\ \ \ \ \ \ n_i=p^{i}+2;
$$
$$
(3)\ \ \  k_i=2^i\ \ \ \ \ \ \ \ \  \text{and} \ \ \ \ \  n_i=p^{2^i-1}+1.
$$

Suppose that a sequence from Problem~\ref{Pr2} does exist.
Then the maps $q^{i+1}:L^{n_{i+1}}(p^{k_{i+1}})\to L^{n_i}(p^{k_i})$ define maps $\tilde q^{i+1}_i:S^{2n_{i+1}-1}\to S^{2n_i-1}$  between the universal covers. Denote by $$E=\lim_{\leftarrow}\{S^{2n_i-1},\tilde q^{i+1}_i\}$$ and by $$B=\lim_{\leftarrow}\{L^{n_i}(p^{k_i}),q^{i+1}_i\}$$ the corresponding inverse limits.
Then the following proposition is straightforward.
\begin{prop}\label{classif}
The compactum $E$ is cell-like and
there is a free $A_p$-action on $E$ with the orbit space $B$.
\end{prop}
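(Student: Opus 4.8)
The plan is to build the $A_p$-action on $E$ level by level, exploiting the fact that each $\tilde q^{i+1}_i$ covers an essential map between lens spaces, and then to verify cell-likeness of the limit $E$ using the inessentiality hypothesis on the covering spheres.

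First I would address the action. Each $S^{2n_i-1}$ carries its standard free $\Z_{p^{k_i}}$-action whose quotient is $L^{n_i}(p^{k_i})$. Because the bonding map $q^{i+1}_i$ is \emph{essential}, it induces an epimorphism $\pi_1(L^{n_{i+1}}(p^{k_{i+1}}))\to\pi_1(L^{n_i}(p^{k_i}))$, i.e.\ a surjection $\Z_{p^{k_{i+1}}}\to\Z_{p^{k_i}}$; since $k_i\to\infty$ this is forced to be the canonical mod $p^{k_i}$ reduction. Consequently the lift $\tilde q^{i+1}_i$ between universal covers is equivariant with respect to this reduction homomorphism, and the $\Z_{p^{k_{i+1}}}$-action on $S^{2n_{i+1}-1}$ descends through it to an action of $\Z_{p^{k_i}}$ compatible with the next stage. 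Passing to the inverse limit, the compatible family $\{\Z_{p^{k_i}}\}$ with these reduction maps has limit exactly $A_p=\varprojlim \Z_{p^{k_i}}$ (using $k_i\to\infty$), and the termwise actions assemble into an action of $A_p$ on $E=\varprojlim S^{2n_i-1}$. This action is free because each finite-level action is free and freeness is preserved in the limit of a tower of free finite-group actions with matching quotients. The orbit space is $E/A_p=\varprojlim\, (S^{2n_i-1}/\Z_{p^{k_i}})=\varprojlim L^{n_i}(p^{k_i})=B$, giving the stated free $A_p$-action with quotient $B$.

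Next I would prove that $E$ is cell-like. Since $E$ is the inverse limit of the tower of spheres $(2)$, it suffices to show that $E$ has trivial shape, and for a limit of finite complexes this reduces to showing that for every $i$ there is a $j$ with $\tilde q^{i+1}_i\circ\dots\circ\tilde q^{i+j}_{i+j-1}$ null-homotopic through maps into $S^{2n_i-1}$ — which is precisely the statement that sequence $(2)$ is \emph{inessential}, as guaranteed by Problem~\ref{Pr2}. Each $S^{2n_i-1}$ is $(2n_i-2)$-connected, so a null-homotopic bonding composite means every map of a finite complex into a bonding sphere eventually factors through a contractible map; the standard criterion that a limit of simply connected finite complexes with eventually null-homotopic bonding compositions has trivial shape then yields that $E$ is cell-like. (Concretely, the \v{C}ech cohomology and all shape homotopy groups vanish because every class is carried forward by some null-homotopic composite.)

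The step I expect to be the main obstacle is pinning down the equivariance of the lifts $\tilde q^{i+1}_i$ precisely enough that the finite actions really do assemble into an $A_p$-action rather than merely into a compatible system of quotients. A lift of an essential map need not a priori be equivariant on the nose; equivariance holds up to a deck transformation, and one must choose the lifts (or post-compose by deck transformations) coherently across the whole tower so that the chosen family is simultaneously equivariant. Because the reduction homomorphisms $\Z_{p^{k_{i+1}}}\to\Z_{p^{k_i}}$ are the canonical ones and the generators act by the fixed standard rotations, I expect this coherence can be arranged inductively, but it is the one place where the word ``straightforward'' in the proposition deserves a careful check.
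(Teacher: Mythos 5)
Your proof is correct, and it is essentially the argument the paper intends: the paper gives no proof at all (it simply declares the proposition ``straightforward''), and your two steps --- assembling the level-wise free $\Z_{p^{k_i}}$-actions into a free $A_p$-action on the inverse limit with quotient $B$, and deducing trivial shape (hence cell-likeness) of $E$ directly from the inessentiality of the sphere sequence $(2)$ --- are exactly the standard way to fill in that gap.

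Two small corrections to your own commentary. First, the epimorphism $\Z_{p^{k_{i+1}}}\to\Z_{p^{k_i}}$ induced by an essential map is not \emph{forced} to be the canonical reduction; it is the reduction composed with an automorphism of $\Z_{p^{k_i}}$. This is harmless, since any inverse sequence of cyclic $p$-groups with surjective bonding homomorphisms and $k_i\to\infty$ has limit isomorphic to $A_p$. Second, the ``main obstacle'' you flag at the end is in fact a non-issue: \emph{any} lift $\tilde q^{i+1}_i$ of $q^{i+1}_i$ to the universal covers is automatically equivariant with respect to the induced homomorphism of deck transformation groups, because for each deck transformation $g$ upstairs the composite $\tilde q^{i+1}_i\circ g$ is again a lift of $q^{i+1}_i\circ p_{i+1}$ and hence equals $h\circ\tilde q^{i+1}_i$ for a unique deck transformation $h$ downstairs; and since the deck groups here are abelian, the resulting homomorphism $g\mapsto h$ is independent of the choice of lift. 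So no coherent choice of lifts across the tower needs to be arranged --- the compatibility you need comes for free.
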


Using Ferry's theorem~\cite{F1} we can modify the bonding maps (possibly with stabilizations) in the inverse sequence 
$\lim_{\leftarrow}\{L^{n_i}(p^{k_i}),q^{i+1}_i\}$
to $UV^0$-maps.
Then we may assume that $B$ and $E$  are path connected and locally path connected.

\subsection{K-theory of lens spaces}
 For any $r$ the actions of the groups $\Z_{p^r}\subset\Z_{p^{r+1}}\subset S^1$ on $S^{\infty}$ and $S^{2n-1}$
form a commutative diagram
$$
\begin{CD}
S^{\infty} @>>> B\Z_{p^k} @>>> B\Z_{p^{k+1}} @>>> \C P^{\infty}\\
@A\subset AA @A\subset AA @A\subset AA @A\subset AA\\
S^{2n-1} @>>> L^n(p^r) @>>> L^n(p^{r+1}) @>>> \C P^n.\\
\end{CD}
$$
The canonical line bundle $\eta$ over $\C P^{\infty}$ defines the line bundles over all spaces in the diagram. This bundle defines an
element in the K-theory of the Eilenberg-MacLane space $B\Z_{p^k}$ which  will be denoted by $\eta_k$. 

As it was computed by Atiyah (see~\cite{AS}) the K-theory of $B\Z_{p^k}$ equals the completion of the representation ring $R\Z_{p^k}$
of $\Z_{p^k}$.
We recall that, $R\Z_{p^k}=\Z[\eta]/\langle 1-\eta^{p^k}\rangle$ where $\eta$ is 
the class of the complex representation of $\Z_{p^k}$
generated by the group embedding $\eta:\Z_{p^k}\to S^1$.
Note that $\eta_k$ is obtained from $\eta$ by  passing  to the map of classifying spaces $\eta:B\Z_{p^k}\to BS^1=\C P^{\infty}$ and pulling back
the canonical complex line bundle. Thus, taking the completion we obtain
 $$K^0(B\Z_{p^k})=(R\Z_{p^k})^{\wedge}=\Z[[\eta_k]]/\langle 1-\eta_k^{p^k}\rangle$$ where $A[[x]]$ denotes the ring of formal series with the variable $x$ and coefficients in $A$.
Note that the mod $p^k$ reduction homomorphism $\Z_{p^{k+r}}\to \Z_{p^{k}}$ takes the generator $\eta_{k}$ to $\eta_{k+r}^{p^r}$.

Let $\bar\eta_k$  denote the restrictions of these classes
to the $(2n-1)$-dimensional lens space $L^n(p^k)$. The
$K$-theory of this lens space was computed in ~\cite{Ma},~\cite{Ka}: 
$$K^0(L^n(p^k))=\Z[\bar\eta_k]/\langle 1-\bar\eta_k^{p^k},(\bar\eta_k-1)^{n}\rangle.$$
We note that the ideal generated by  $\bar\eta_k-1$ in the above ring is isomorphic to the
reduced $K$-theory of $L^n(p^k)$ (see~\cite{KS}).

\begin{prop}\label{ideal}
For any positive integers $k>l$, $m<p^l$, and prime $p$ the polynomial $(x^{p^{k-l}}-1)^m$ does not belong to the ideal $\langle x^{p^k}-1,(x-1)^n\rangle$ of the polynomial ring $\Z[x]$ provided that $mp^{k-l}<n$.
\end{prop}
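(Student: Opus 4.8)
The plan is to pass to the coefficient field $\mathbb{F}_p$, where the Frobenius endomorphism collapses every polynomial in sight to a power of $(x-1)$, turning the ideal-membership question into a single comparison of orders of vanishing at $x=1$. Since the reduction map $\rho:\Z[x]\to\mathbb{F}_p[x]$ is a ring homomorphism, it carries the ideal $\langle x^{p^k}-1,(x-1)^n\rangle$ into $\langle \rho(x^{p^k}-1),(x-1)^n\rangle$; hence if I can show that $\rho((x^{p^{k-l}}-1)^m)$ fails to lie in the latter ideal of $\mathbb{F}_p[x]$, the original non-membership follows a fortiori. So it suffices to prove the non-membership after reduction mod $p$.

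Over $\mathbb{F}_p$ one has the identity $x^{p^j}-1=(x-1)^{p^j}$ for every $j$ (the iterated Frobenius identity). First I would apply this to the generator, giving $\rho(x^{p^k}-1)=(x-1)^{p^k}$, so that the reduced ideal is $\langle (x-1)^{p^k},(x-1)^n\rangle=\langle (x-1)^{\min(p^k,n)}\rangle$, a principal ideal in the PID $\mathbb{F}_p[x]$. Next I would apply the same identity to the target, obtaining $\rho\big((x^{p^{k-l}}-1)^m\big)=\big((x-1)^{p^{k-l}}\big)^m=(x-1)^{mp^{k-l}}$. Membership of this monomial in $\langle(x-1)^{\min(p^k,n)}\rangle$ is then equivalent to the single inequality $mp^{k-l}\ge \min(p^k,n)$ on the order of vanishing at $x=1$.

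It remains only to see that the hypotheses rule out that inequality. The assumption $mp^{k-l}<n$ handles the case $\min(p^k,n)=n$, while $m<p^l$ gives $mp^{k-l}<p^{l}p^{k-l}=p^k$, handling the case $\min(p^k,n)=p^k$; together they yield $mp^{k-l}<\min(p^k,n)$, so $(x-1)^{mp^{k-l}}$ does not lie in the reduced ideal, and we are done.

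I expect the only real subtlety to be conceptual rather than computational: recognizing that reduction modulo $p$ trivializes the problem. Worked directly over $\Z$, the ideal $\langle x^{p^k}-1,(x-1)^n\rangle$ is intricate --- it is essentially the relation ideal for the $K$-theory of the lens space $L^n(p^k)$ --- and matching coefficients by hand leads to a messy, case-dependent $p$-adic valuation analysis. The clean point is that both generators and the target are $p$-th power expressions, so mod $p$ the Frobenius identity turns them all into explicit powers of $(x-1)$ and the whole question reduces to the two elementary inequalities above; the one bookkeeping point worth noting is that \emph{both} hypotheses are genuinely needed, each disposing of one of the two cases in the minimum.
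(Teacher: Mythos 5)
Your proof is correct and is essentially the paper's own argument: the paper also reduces modulo $p$ (after the cosmetic substitution $y=x-1$) so that the Frobenius identity turns both generators and the target into pure powers of $y$, and then concludes from $mp^{k-l}<\min\{p^k,n\}$ exactly as you do, using $m<p^l$ to handle the $p^k$ case. The only difference is notational; in fact your write-up is cleaner, since the paper's phrase ``the $\mod p^{k-l}$ reduction'' is evidently a slip for reduction mod $p$, which is what you state correctly.
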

\begin{proof}
We change the variable $y=x-1$. Thus, we need to show that $((y+1)^{p^{k-l}}-1)^m$ does not belong to
$\langle (y+1)^{p^k}-1,y^n\rangle$.
The $mod p^{k-l}$ reduction of this problem is the question whether $y^{mp^{k-l}}$ belongs the ideal
$\langle y^{p^k},y^n\rangle$. Since $mp^{k-l}<\min\{p^k,n\}$, the answer to the question is negative
and the result follows.
\end{proof}

\begin{prop}\label{infinite}
Let $q_i:B\to L^{n_i}(p^{k_i})$ be the projection of the limit in the above inverse system. 
Suppose that $m<p^{k_i-k_0}$
Then the induced homomorphism in the reduced  K-theory $$q_i^*:\tilde K^0(L^{n_i}(p^{k_i}))\to \tilde K^0(B)$$  takes $(\bar\eta_{k_i}-1)^m$ to a nonzero element.
\end{prop}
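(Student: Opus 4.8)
The plan is to identify $\tilde K^0(B)$ with a direct limit and then to show that the class in question survives to every finite stage. Since each $L^{n_j}(p^{k_j})$ is a finite complex and $B=\varprojlim_j L^{n_j}(p^{k_j})$ is a compact metric space, continuity of (\v{C}ech) $K$-theory gives a natural isomorphism
$$\tilde K^0(B)\cong \varinjlim_j \tilde K^0(L^{n_j}(p^{k_j})),$$
in which $q_i^*$ is the canonical map from the $i$-th term. In a direct limit of abelian groups, a class $\alpha\in\tilde K^0(L^{n_i}(p^{k_i}))$ has nonzero image precisely when $(q^j_i)^*(\alpha)$ is nonzero for every $j\ge i$, where $q^j_i=q^{i+1}_i\circ\cdots\circ q^j_{j-1}$. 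So it suffices to prove that $(q^j_i)^*\big((\bar\eta_{k_i}-1)^m\big)\neq 0$ in $\tilde K^0(L^{n_j}(p^{k_j}))$ for all $j\ge i$.

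Next I would compute the bonding maps on the canonical classes. Each $q^{i+1}_i$ is essential, so it induces an epimorphism $\Z_{p^{k_{i+1}}}\to\Z_{p^{k_i}}$; after absorbing an automorphism of the target (which is realized by a self-homeomorphism of $L^{n_i}(p^{k_i})$ and hence does not affect nonvanishing) we may take this epimorphism to be the mod $p^{k_i}$ reduction. By the reduction formula recorded above, which sends $\eta_{k_i}$ to $\eta_{k_{i+1}}^{p^{k_{i+1}-k_i}}$, we get $(q^{i+1}_i)^*(\bar\eta_{k_i})=\bar\eta_{k_{i+1}}^{p^{k_{i+1}-k_i}}$, and telescoping the exponents,
$$(q^j_i)^*(\bar\eta_{k_i})=\bar\eta_{k_j}^{\,p^{k_j-k_i}},\qquad (q^j_i)^*\big((\bar\eta_{k_i}-1)^m\big)=\big(\bar\eta_{k_j}^{\,p^{k_j-k_i}}-1\big)^m.$$
Using the presentation $\tilde K^0(L^{n_j}(p^{k_j}))=\langle\bar\eta_{k_j}-1\rangle\subset\Z[\bar\eta_{k_j}]/\langle 1-\bar\eta_{k_j}^{p^{k_j}},(\bar\eta_{k_j}-1)^{n_j}\rangle$, this element is nonzero exactly when the polynomial $(x^{p^{k_j-k_i}}-1)^m$ does not lie in the ideal $\langle x^{p^{k_j}}-1,(x-1)^{n_j}\rangle$ of $\Z[x]$.

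This last nonmembership is exactly \propref{ideal} with $k=k_j$ and $l=k_i$, and the remaining work is to check its two hypotheses for every $j\ge i$. The bound $m<p^{k_i-k_0}\le p^{k_i}=p^{l}$ gives $m<p^{l}$. For the inequality $mp^{k-l}<n$ I would combine the standing constraint $n_j>p^{k_j-k_0}$ of Problem~\ref{Pr2} with the hypothesis $m<p^{k_i-k_0}$:
$$m\,p^{k_j-k_i}<p^{k_i-k_0}\,p^{k_j-k_i}=p^{k_j-k_0}<n_j,$$
so $\propref{ideal}$ applies and $(q^j_i)^*\big((\bar\eta_{k_i}-1)^m\big)\neq 0$; the case $j=i$ reduces to the trivial observation that $m<n_i$. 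This is precisely where the linear growth condition $n_i>p^{k_i-k_0}$ is needed. I expect the two points requiring care to be the continuity statement for $K$-theory on the (possibly infinite-dimensional) compactum $B$—where one works with \v{C}ech/representable $K$-theory, so that the direct limit is exact and no $\varprojlim^1$ term intervenes—and the verification that the automorphism ambiguity in the $\pi_1$-map is harmless, which one can alternatively bypass by reducing the polynomial identity modulo $p$ as in the proof of \propref{ideal}.
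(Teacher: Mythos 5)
Your proof is correct and takes essentially the same route as the paper: reduce to the finite stages of the inverse system (the paper leaves the continuity of $K$-theory implicit), pull back $(\bar\eta_{k_i}-1)^m$ along the bonding maps to obtain $\big(\bar\eta_{k_j}^{\,p^{k_j-k_i}}-1\big)^m$, and apply \propref{ideal} with $k=k_j$, $l=k_i$, $n=n_j$, verified by exactly the same two inequalities. The only differences are expository--your explicit direct-limit argument and your handling of the unit ambiguity in the induced $\pi_1$-epimorphism, both of which the paper glosses over--so this is the paper's proof with the details filled in.
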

\begin{proof}
Note that $$(q^{i+j}_i)^*((\bar\eta_{k_i}-1)^m)=(\bar\eta_{k_{i+j}}^{p^{k_{i+j}-k_i}}-1)^m.$$
We apply Proposition~\ref{ideal} with $k=k_{i+j}$, $l=k_i$, $n=n_{i+j}$ to obtain that
$(\bar\eta_{k_{i+j}}^{p^{k_{i+j}-k_i}}-1)^m\ne 0$ since $m<p^{k_i}$ and
$$mp^{k_{i+j}-k_i}<p^{k_i-k_0}p^{k_{i+j}-k_i}=  p^{k_{i+j}-k_0}<n_{i+j},$$
i.e., the conditions of Proposition~\ref{ideal} are satisfied.
\end{proof}

\begin{cor}\label{inf-prod}
The reduced K-theory cup length of $B$ is unbounded.
\end{cor}

\section{Lens Sequence Problem and the Schwarz genus}

We consider the level functions defined in ~\cite{Me}$$v_{p,k}(m):=\min\{n | \ \exists\  f:L^m(p^{k-1})\stackrel{\Z_p}\rightarrow S^{2n-1}\}$$
where $f$ is a $\Z_p$-equivariant map with respect to the standard free actions. Lower bounds for these functions were given by Vick~\cite{V}, rediscovered by Bartsch~\cite{Ba}, and formulated in this way by D. Meyer~\cite{Me}:
$$
v_{p,k}(m)\ge\lceil\frac{m-1}{p^{k-1}}\rceil+1.
$$
Also D. Meyer has computed $v_{p,2}(m)=\frac{m-2}{p}+2$ for odd $p$ and $m=2$ mod $p$~\cite{Me}.

REMARK. The existence of a sequence of lens spaces as in Problem~\ref{Pr2} does not contradict to the above estimates of the level functions. 
Indeed, for $i>j$ the composition 
$q^{j+1}_j\circ\dots\circ q^i_{i-1}$ induces a $\Z_p$-equivariant map $L^{n_i}(p^{k_i-k_j})\to S^{2n_j-1}$. Therefore, we have the inequality
$$n_{j}\ge v_{p,k_{i}-k_{j}+1}(n_{i})\ge\lceil\frac{n_{i}-1}{p^{k_i-k_j}}\rceil+1\ge \frac{p^{k_{i}-k_{0}}}{p^{k_{i}-k_j}}+1>p^{k_j-k_0}$$
which is consistent with the conditions on $k_i$ and $n_i$.

\subsection{Schwarz genus} We recall that the Schwarz genus $Sg(f)$ of a fibration $f:E\to B$ is the minimal $k$ such that $B$ can be covered by
open sets $A_1,\dots, A_k$ such that $p$ admits a section on each $A_i$ ~\cite{Sch}.

\begin{prop}\cite{Sch}\label{joint section}
$Sg(f)\le n$ if and only if $\ast^nf:\ast^n_BE\to B$ admits a section.
\end{prop}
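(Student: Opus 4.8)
The plan is to prove both implications by directly unwinding the definition of the fibrewise join $\ast^n_B E$, which over each point $b\in B$ is the $n$-fold join $f^{-1}(b)\ast\cdots\ast f^{-1}(b)$ of the fibre. Concretely I would realize a point of $\ast^n_B E$ as a tuple $(e_1,\dots,e_n;t_1,\dots,t_n)$ with $e_i\in E$, $t_i\ge 0$, $\sum_i t_i=1$, subject to the requirement that all $e_i$ with $t_i>0$ lie in a single fibre, and with the identification that collapses any coordinate $e_i$ whose barycentric weight $t_i$ is zero. The map $\ast^n f$ then sends such a tuple to the common base point $f(e_i)$.

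For the implication $Sg(f)\le n\Rightarrow$ existence of a section, I would start from an open cover $B=A_1\cup\cdots\cup A_n$ together with sections $s_i\colon A_i\to E$ of $f$, choose a partition of unity $\{\lambda_i\}$ subordinate to $\{A_i\}$, and define
$$\sigma(b)=\big(s_1(b),\dots,s_n(b);\lambda_1(b),\dots,\lambda_n(b)\big).$$
The point is that $\lambda_i(b)>0$ forces $b\in A_i$, so the term $s_i(b)$ is defined exactly where its weight is nonzero; the collapsing identification makes the undefined terms irrelevant, and since every contributing $s_i(b)$ lies in $f^{-1}(b)$, the tuple is a genuine point of $\ast^n_B E$ lying over $b$. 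Thus $\sigma$ is a section of $\ast^n f$.

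For the converse, given a section $\sigma$ of $\ast^n f$ I would read off its barycentric coordinates $\mu_i:=t_i\circ\sigma\colon B\to[0,1]$ and set $A_i=\{\,b\mid\mu_i(b)>0\,\}$. These sets are open and cover $B$ because $\sum_i\mu_i\equiv 1$. On $A_i$ the $i$-th entry of $\sigma$ is a well-defined point of $E$ (the coordinate is not collapsed precisely where $\mu_i>0$) lying in $f^{-1}(b)$, so projecting $\sigma|_{A_i}$ onto its $i$-th factor yields a section $s_i\colon A_i\to E$ of $f$. Hence $B$ is covered by $n$ open sets each admitting a section, i.e.\ $Sg(f)\le n$.

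The only delicate point, and the step I would treat most carefully, is the well-definedness and continuity of these assignments at the collapsing locus of the join: one must verify that the quotient topology on $\ast^n_B E$ keeps $b\mapsto\sigma(b)$ continuous even as individual weights $\lambda_i(b)$ pass through zero (where $s_i$ may be undefined), and dually that projection onto the $i$-th factor is continuous on the open set where $\mu_i>0$. Everything else is a formal consequence of the join construction, so no machinery beyond partitions of unity is required.
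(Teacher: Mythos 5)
Your argument is correct, but there is nothing in the paper to compare it against: the proposition is quoted from Schwarz \cite{Sch} and the paper gives no internal proof. What you have written is essentially Schwarz's original argument, and it is the standard one. Two points that you defer should be made explicit, since they are the only places where anything can go wrong. First, the existence of a partition of unity subordinate to the finite open cover $\{A_i\}$ requires $B$ to be normal (or paracompact); in the paper's applications $B$ is a compact metric space (a lens space, or an inverse limit of such), so this is automatic, but the equivalence can fail without some such hypothesis on $B$. Second, the continuity problem you flag at the collapsing locus is resolved by taking the subordination to be strict, i.e. $\overline{\{\lambda_i>0\}}\subset A_i$: then every $b\in B$ has a neighborhood $V$ on which, for each $i$, either $s_i$ is defined and continuous (when $V\subset A_i$) or $\lambda_i\equiv 0$ (when $V$ misses $\mathrm{supp}\,\lambda_i$), so on $V$ the map $\sigma$ factors continuously through the sub-join spanned by the surviving coordinates, and continuity in the quotient topology follows. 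For the converse direction, the barycentric functions $t_i$ on $\ast^n_BE$ are induced by continuous functions on the pieces of the join that respect the identifications, hence are continuous, and the $i$-th entry is a well-defined continuous map on $\{t_i>0\}$; this makes your $A_i$ open and your $s_i$ continuous, which is exactly $Sg(f)\le n$ under the paper's definition of the genus. With these two remarks filled in, your proof is complete and needs no machinery beyond what you used.
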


A free action of $\Z_r$ on $S^1$ determines a free $\Z_r$-action on $S^{2m-1}=\ast^m S^1$, and a free action of $\Z_{pr}$ on $S^{2m-1}$ determines a free $\Z_r$-action on $L^m(p)$.
In the question below we consider the free $\Z_{p^k}$-actions on $L^m(p)$ and $S^1$
determined in this way.

Let  $\pi_k^m=p_{S^1}:W^m_k=L^m(p)\times_{\Z_{p^k}}S^1\to L^m(p^{k+1})$ be the  $S^1$-bundle from the Borel construction
for $\Z_{p^k}$-actions on $S^1$ and $L^m(p)$.
\begin{thm}\label{schwarz}
There is a map $q:L^m(p^{k+1})\to L^n(p^{k})$ that induces an epimorphism of the fundamental groups if and only if
$Sg(\pi_k^m)\le n$.
\end{thm}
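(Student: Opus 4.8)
The plan is to prove the two implications separately, translating between maps $L^m(p^{k+1})\to L^n(p^k)$ inducing $\pi_1$-epimorphisms and sections of the join power of the $S^1$-bundle $\pi_k^m$. The guiding principle is that a map to a lens space $L^n(p^k)$ is governed by a line bundle together with a section of its $n$-fold join, since $L^n(p^k)$ is itself the $n$-fold join construction for the $\Z_{p^k}$-action on $S^1$, and by Proposition~\ref{joint section} the join power $\ast^n_B E\to B$ carries exactly the data measured by the Schwarz genus. So the theorem should be read as identifying $Sg(\pi_k^m)$ with the least $n$ for which such a classifying map into $L^n(p^k)$ exists.

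First I would treat the direction \emph{$Sg(\pi_k^m)\le n$ implies existence of an essential $q$}. By Proposition~\ref{joint section}, $Sg(\pi_k^m)\le n$ means that $\ast^n_{L^m(p^{k+1})}W^m_k\to L^m(p^{k+1})$ admits a section $s$. The fibrewise join $\ast^n W^m_k$ is the total space of the $\Z_{p^k}$-Borel join $L^m(p)\times_{\Z_{p^k}}\ast^n S^1 = L^m(p)\times_{\Z_{p^k}}S^{2n-1}$, and its fibres are copies of $S^{2n-1}$ carrying the free diagonal $\Z_{p^k}$-action. A section $s$ therefore gives a $\Z_{p^k}$-equivariant map from the $\Z_{p^k}$-cover of $L^m(p^{k+1})$, namely $S^{2m-1}$ (via the $\Z_{p^{k+1}}$-action restricted to $\Z_{p^k}$), to $S^{2n-1}$. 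Passing to quotients by $\Z_{p^k}$ produces the desired $q:L^m(p^{k+1})\to L^n(p^k)$, and equivariance with respect to the full $\Z_{p^{k+1}}/\Z_{p^k}=\Z_p$ residual structure forces the induced $\pi_1$-map $\Z_{p^{k+1}}\to\Z_{p^k}$ to be the surjection, i.e.\ $q$ is essential.

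For the converse, given an essential $q:L^m(p^{k+1})\to L^n(p^k)$, I would lift $q$ to the universal covers to obtain a $\Z_{p^k}$-equivariant map $S^{2m-1}\to S^{2n-1}$ (essentiality is exactly what guarantees the covering of $q$ respects the subgroup $\Z_{p^k}$). Reinterpreting this equivariant map as a section of the associated $S^{2n-1}$-bundle $L^m(p)\times_{\Z_{p^k}}S^{2n-1}=\ast^n_{L^m(p^{k+1})}W^m_k$ and invoking Proposition~\ref{joint section} in the reverse direction yields $Sg(\pi_k^m)\le n$.

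The main obstacle is the bookkeeping of the two nested cyclic groups $\Z_{p^k}\subset\Z_{p^{k+1}}$ and the identification of the fibrewise join with the correct Borel construction. I would need to verify carefully that the base of the bundle is $L^m(p^{k+1})$ (not $L^m(p^k)$), which is where the extra factor of $p$ in the $S^1$-action enters, and that the residual $\Z_p$-action is precisely what encodes essentiality of $q$. Establishing the natural homeomorphism $\ast^n_B W^m_k\cong L^m(p)\times_{\Z_{p^k}}S^{2n-1}$ compatibly with the $\Z_{p^k}$-actions is the technical heart; once that identification is in hand, both implications reduce to the equivalence between equivariant maps of spheres and sections guaranteed by Schwarz's characterization.
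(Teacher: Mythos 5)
Your overall skeleton---essential map $\Leftrightarrow$ equivariant map of covers $\Leftrightarrow$ section of the fiberwise join $\Leftrightarrow$ Schwarz genus bound via Proposition~\ref{joint section}---is the same as the paper's, and your identification $\ast^n_B W^m_k\cong L^m(p)\times_{\Z_{p^k}}S^{2n-1}$ is correct. But the middle translation is carried out with the wrong cover, and this is a genuine error, not just unfinished bookkeeping. In the paper's setup $\Z_{p^k}$ acts on $L^m(p)=S^{2m-1}/\Z_p$ as the quotient group $\Z_{p^{k+1}}/\Z_p$, and the principal object underlying $\pi^m_k$ is the covering $L^m(p)\to L^m(p)/\Z_{p^k}=L^m(p^{k+1})$. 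Consequently a section of $L^m(p)\times_{\Z_{p^k}}S^{2n-1}\to L^m(p^{k+1})$ corresponds to a $\Z_{p^k}$-equivariant map $L^m(p)\to S^{2n-1}$ --- not, as you claim, to an equivariant map out of $S^{2m-1}$ with the restricted action of the subgroup $\Z_{p^k}\subset\Z_{p^{k+1}}$. These are different objects: $S^{2m-1}$ with the restricted $\Z_{p^k}$-action has orbit space $L^m(p^k)$, so it is not ``the $\Z_{p^k}$-cover of $L^m(p^{k+1})$'' (that cover is $L^m(p)$), and quotienting your equivariant map would produce a map $L^m(p^k)\to L^n(p^k)$, which has the wrong domain. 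Your group-theoretic bookkeeping confirms the mix-up: you treat $\Z_{p^k}$ as a subgroup with residual quotient $\Z_{p^{k+1}}/\Z_{p^k}=\Z_p$, whereas the relevant extension is $1\to\Z_p\to\Z_{p^{k+1}}\to\Z_{p^k}\to 1$, with $\Z_{p^k}$ appearing as a quotient, not a subgroup.

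The same confusion undermines your converse. Lifting an essential $q$ to universal covers gives a map $\tilde q:S^{2m-1}\to S^{2n-1}$ that is equivariant only with respect to the surjection $q_*:\Z_{p^{k+1}}\to\Z_{p^k}$; restricting to the subgroup $p\Z_{p^{k+1}}\cong\Z_{p^k}$ does not give strict $\Z_{p^k}$-equivariance for the free actions (the restriction of $q_*$ has image $p\Z_{p^k}$ and a kernel of order $p$), so $\tilde q$ cannot be directly ``reinterpreted'' as a section of $L^m(p)\times_{\Z_{p^k}}S^{2n-1}\to L^m(p^{k+1})$. The missing step, which is how the paper argues, is to note that $\tilde q$ is invariant under $\ker q_*=\Z_p$ and hence descends to a map $q':L^m(p)=S^{2m-1}/\Z_p\to S^{2n-1}$, genuinely $\Z_{p^k}$-equivariant because $q_*$ induces an isomorphism $\Z_{p^{k+1}}/\Z_p\to\Z_{p^k}$; it is $q'$, not $\tilde q$, that corresponds to a section. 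Dually, in the forward direction the section gives $q':L^m(p)\to S^{2n-1}$, and passing to quotients gives $q:L^m(p)/\Z_{p^k}=L^m(p^{k+1})\to L^n(p^k)$, which is essential because on fundamental groups it covers the identity of the deck group $\Z_{p^k}$. With $L^m(p)$ and its quotient action put in place of $S^{2m-1}$ and the restricted action throughout, your argument becomes the paper's proof.
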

\begin{proof}
Such map $q$ exists if and only if there is a $\Z_{p^{k}}$-equivariant map $q':L^m(p)\to S^{2n-1}$  for free actions. This equivalent to the existence of a section of the locally trivial $S^{2n-1}$-bundle
$\pi:L^m(p)\times_{\Z_{p^k}}S^{2n-1}\to L^m(p^{k+1}$ from the Borel construction. Since the sphere $S^{2n-1}=\ast_{i=1}^nS^1$ is the join product of
$n$ circles and the action comes from a free $\Z_{p^k}$-action on $S^1$, the bundle $\pi$ is the fiberwise join product of $n$ copies of
the $S^1$-bundle $\pi^m_k$. Then the result follows from Proposition~\ref{joint section}.
\end{proof}

\begin{cor}
$Sg(\pi^m_k)\ge \lceil\frac{m-1}{p}\rceil+1$.

For odd $p$, $Sg(\pi^m_1)=\frac{m-2}{p}+2$ provided $m=2$ mod $p$.
\end{cor}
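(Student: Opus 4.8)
The plan is to obtain both inequalities by combining Theorem~\ref{schwarz} with the level function estimates recorded at the start of this section. The Schwarz genus $Sg(\pi^m_k)$ is, by Theorem~\ref{schwarz}, exactly the minimal $n$ for which there exists a map $q:L^m(p^{k+1})\to L^n(p^k)$ inducing an epimorphism of fundamental groups. Tracing back through the proof of that theorem, such a map exists if and only if there is a $\Z_{p^k}$-equivariant map $L^m(p)\to S^{2n-1}$ for the standard free actions. But this is precisely the condition defining the level function $v_{p,k+1}(m)$: indeed the free $\Z_{p^k}$-action on $L^m(p)$ is the free $\Z_{p^{k+1}}$-action on $S^{2m-1}=\ast^m S^1$ passed to the quotient, so a $\Z_{p^k}$-equivariant map $L^m(p)\to S^{2n-1}$ is the same data as a $\Z_{p^k}$-equivariant map from the $(2m-1)$-sphere with its $\Z_{p^k}\subset\Z_{p^{k+1}}$ action. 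Thus I would first establish the clean identity $Sg(\pi^m_k)=v_{p,k+1}(m)$, reducing the corollary to the two inequalities on $v_{p,k+1}$.

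With that identification in hand, the first inequality is immediate: substituting $k+1$ for the level index into Vick's bound
$$
v_{p,k+1}(m)\ge\Big\lceil\frac{m-1}{p^{k}}\Big\rceil+1
$$
gives $Sg(\pi^m_k)\ge\lceil\frac{m-1}{p^k}\rceil+1$. I notice that the statement of the corollary as written has $\lceil\frac{m-1}{p}\rceil+1$, which matches this only for $k=1$; so I would either restrict the general lower bound to the $k=1$ case or, more honestly, record the general bound $\lceil\frac{m-1}{p^k}\rceil+1$ and flag the discrepancy. The second assertion is the exact value for $k=1$ and odd $p$ when $m\equiv 2\pmod p$: here I invoke Meyer's computation $v_{p,2}(m)=\frac{m-2}{p}+2$ verbatim, since $\pi^m_1$ corresponds to level index $k+1=2$, yielding $Sg(\pi^m_1)=\frac{m-2}{p}+2$.

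The genuinely substantive step is the first one, namely verifying carefully that the two equivariance conditions coincide, i.e.\ that the $\Z_{p^k}$-equivariant maps counted by $Sg(\pi^m_k)$ through Theorem~\ref{schwarz} are literally the maps whose minimal target dimension $v_{p,k+1}$ measures. This requires pinning down that the free $\Z_{p^k}$-action on $L^m(p)$ arising in the Borel construction is the one induced by the inclusion $\Z_{p^k}\subset\Z_{p^{k+1}}$ acting freely on $S^{2m-1}$, exactly as set up in the paragraph preceding Theorem~\ref{schwarz}, and that the domain lens space in the definition of $v_{p,k+1}(m)$ is $L^m(p^{(k+1)-1})=L^m(p^k)$ rather than $L^m(p)$. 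Reconciling these index conventions is where the only real care is needed; once the domains and acting groups are matched, the equivalence is a formal consequence of Theorem~\ref{schwarz} and the remaining inequalities follow by direct substitution into the quoted estimates. Everything after the identification is bookkeeping, so I expect the main obstacle to be stating the correspondence at the right level index without off-by-one errors.
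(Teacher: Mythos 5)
There is a genuine gap, and it is precisely in the step you yourself flagged as the ``genuinely substantive'' one. The identification $Sg(\pi^m_k)=v_{p,k+1}(m)$ is not justified and is not what the paper uses. Theorem~\ref{schwarz} characterizes $Sg(\pi^m_k)\le n$ by the existence of a $\Z_{p^k}$-equivariant map $L^m(p)\to S^{2n-1}$: the domain is $S^{2m-1}/\Z_p$ and the acting group is the \emph{large} quotient $\Z_{p^{k+1}}/\Z_p\cong\Z_{p^k}$. The level function $v_{p,k+1}(m)$ is defined by the existence of a $\Z_p$-equivariant map $L^m(p^{k})\to S^{2n-1}$: the domain is $S^{2m-1}/\Z_{p^k}$ and the acting group is the \emph{small} quotient $\Z_{p^{k+1}}/\Z_{p^k}\cong\Z_p$. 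These are different conditions on different spaces. Your ``same data'' argument conflates equivariance with respect to the quotient homomorphism $\Z_{p^{k+1}}\to\Z_{p^k}$ (which is what you get by precomposing with $S^{2m-1}\to L^m(p)$) with equivariance for a subgroup $\Z_{p^k}\subset\Z_{p^{k+1}}$, and in no case does a $\Z_{p^k}$-equivariant map out of $L^m(p)$ descend to a $\Z_p$-equivariant map out of $L^m(p^k)$: an equivariant map into a space with a free action is never invariant under the subgroup $\Z_{p^{k-1}}\subset\Z_{p^k}$ you would need to quotient by. Concretely, $Sg(\pi^m_k)$ and $v_{p,k+1}(m)$ are Schwarz genera of two \emph{different} $S^1$-bundles over $L^m(p^{k+1})$ (associated to the characters $\eta^{p}$ and $\eta^{p^k}$ respectively), so there is no reason for them to coincide when $k>1$.

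This error is what produces your weaker bound $\lceil\frac{m-1}{p^k}\rceil+1$ and your mistaken suspicion that the corollary contains an off-by-one misprint. The paper's argument is different and $k$-independent: given the $\Z_{p^k}$-equivariant map $f:L^m(p)\to S^{2n-1}$ furnished by Theorem~\ref{schwarz}, restrict the equivariance to the subgroup $\Z_p\subset\Z_{p^k}$. On the target this subgroup acts by the standard free $\Z_p$-rotation, and on the source it is the residual $\Z_{p^2}/\Z_p$-action on $S^{2m-1}/\Z_p$; thus $f$ is a $\Z_p$-equivariant map $L^m(p)\to S^{2n-1}$ for the standard actions, i.e.\ exactly the condition defining $v_{p,2}(m)$ --- level index $2$ for every $k$. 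Hence $Sg(\pi^m_k)\ge v_{p,2}(m)\ge\lceil\frac{m-1}{p}\rceil+1$ for all $k$, which is the stated (stronger) bound; the corollary is correct as written. Your treatment of the second assertion is fine and agrees with the paper: for $k=1$ the equivalence of Theorem~\ref{schwarz} does give $Sg(\pi^m_1)=v_{p,2}(m)$, and Meyer's computation then yields $\frac{m-2}{p}+2$.
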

\begin{proof}
Suppose that $Sg(\pi^m_k)=n$. Then there is a map $q:L^m(p^{k+1})\to L^n(p^{k})$ that induces an epimorphism of the fundamental groups. Going to $\Z_{p^k}$ covers we obtain a $\Z_{p^k}$-equivariant map
$f:L^m(p)\to S^{2n-1}$. Thus, the map $f$ is $\Z_p$-equivariant as well, $\Z_p\subset\Z_{p^k}$. 
Hence $v_{p,2}(m)\le n$. We proved the inequality $Sg(\pi^m_k)\ge v_{p,2}(m)$. The cited above results of Vick, Bartsch, and D. Meyer  imply 
$Sg(\pi^m_k)\ge\lceil\frac{m-1}{p}\rceil+1$.
\end{proof}

\begin{prob}\label{Pr3}
(1) What is the Schwarz genus of the $S^1$-bundle from the Borel construction $\pi_k^m:L^m(p)\times_{\Z_{p^k}}S^1\to L^m(p^{k+1})$?

(2) In particular, if $m=2$ mod $p$, is $Sg(\pi_k^m)=\frac{m-2}{p}+2$ for all $k$ ?
\end{prob}

\begin{cor}
A positive answer to Problem~\ref{Pr3} (2) gives a positive answer to Problem~\ref{Pr2}.
\end{cor}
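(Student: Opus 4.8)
The plan is to solve the concrete version (2) of Problem~\ref{Pr2}, namely $k_i=i+1$ and $n_i=p^i+2$, producing the bonding maps straight from the conjectured Schwarz genus. First I would record the arithmetic. Since $n_{i+1}=p^{i+1}+2\equiv 2\pmod p$, a positive answer to Problem~\ref{Pr3}(2) gives $Sg(\pi^{n_{i+1}}_{k_i})=\frac{n_{i+1}-2}{p}+2=p^{i}+2=n_i$. Because $k_{i+1}=k_i+1$, \theoref{schwarz}, applied with $m=n_{i+1}$, $k=k_i$ and $n=n_i$, then produces an essential map $q^{i+1}_i:L^{n_{i+1}}(p^{k_{i+1}})\to L^{n_i}(p^{k_i})$, precisely because $Sg(\pi^{n_{i+1}}_{k_i})=n_i\le n_i$. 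This is the single point at which the hypothesis enters, and it yields an infinite essential sequence of the desired form. The two numerical side conditions are then immediate: $k_i=i+1\to\infty$, and $n_i=p^i+2>p^i=p^{k_i-k_0}$.

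The remaining and decisive task is to arrange that the sequence~(2) of covering spheres $S^{2n_i-1}$ is inessential. The key structural feature I would exploit is that each target $S^{2n_i-1}$ is odd-dimensional, so every group $\pi_{2n_{i+1}-1}(S^{2n_i-1})$ is finite and each covering lift $\tilde q^{i+1}_i$ represents a torsion class. The available freedom is to alter an essential $q^{i+1}_i$ over its top cell without touching the lower skeleta, hence without disturbing the epimorphism on $\pi_1$; equivariance of the lift forces this modification to change $[\tilde q^{i+1}_i]$ by an element of $p^{k_{i+1}}\pi_{2n_{i+1}-1}(S^{2n_i-1})$, which annihilates the prime-to-$p$ part of the class. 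After this adjustment I may assume each $[\tilde q^{i+1}_i]$ is $p$-primary.

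To treat the $p$-primary part I would use the factorization $\tilde q^{i+1}_i=g_i\circ\pi$, where $\pi:S^{2n_{i+1}-1}\to L^{n_{i+1}}(p)$ is the $p$-fold cover coming from $k_{i+1}-k_i=1$ and $g_i:L^{n_{i+1}}(p)\to S^{2n_i-1}$ is exactly the $\Z_{p^{k_i}}$-equivariant map underlying \theoref{schwarz}. Iterating, the length-$j$ composite factors as $S^{2n_{i+j}-1}\to L^{n_{i+j}}(p^{j})\to S^{2n_i-1}$, and the goal is to show that the resulting $p$-primary class in $\pi_{2n_{i+j}-1}(S^{2n_i-1})$ vanishes once $j$ is large.

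I expect this last step to be the main obstacle. Equivariance makes the top-cell move act through multiplication by $p^{k_{i+1}}$, so it cannot by itself kill the $p$-primary part of a single lift; one is forced to argue instead that the compositions die. The feature to push on is the geometric growth $2n_i-1=2p^i+3$, which places $\tilde q^{i+j}_i$ in the very large stem $2p^i(p^j-1)$; controlling the $p$-primary composition product through the lens spaces $L^{n_{i+j}}(p^{j})$, and showing it is eventually trivial, is where the genuine difficulty lies.
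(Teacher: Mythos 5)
Your construction of the essential sequence is correct and is essentially the same first step as in the paper: with $k_i=i+1$, $n_i=p^i+2$, the hypothesis of Problem~\ref{Pr3}(2) gives $Sg(\pi^{n_{i+1}}_{k_i})=\frac{n_{i+1}-2}{p}+2=n_i$, and Theorem~\ref{schwarz} then produces essential bonding maps. But the proof has a genuine gap, which you yourself flag in the last paragraph: nothing forces the sequence of covering spheres to be inessential. The hypothesis on the Schwarz genus only guarantees the \emph{existence} of an essential map; it gives no control whatsoever over the homotopy class of its equivariant lift in $\pi_{2n_{i+1}-1}(S^{2n_i-1})$, and your two devices do not close this. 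The top-cell modification (even granting that equivariance makes it act by multiples of $p^{k_{i+1}}$) at best kills the prime-to-$p$ part of a single lift, and the proposed analysis of the $p$-primary parts of the long compositions through $L^{n_{i+j}}(p^j)$ is not a reduction of the problem --- it \emph{is} the problem. With your parameters, chosen so that the Schwarz genus bound is attained with equality, there is no mechanism left to make the compositions die, and the claim remains unproved.

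The paper sidesteps this difficulty completely with one extra idea: dimensional slack plus restriction. It takes $k_i=i$ and $n_i=3p^i-\sum_{s=1}^{i-1}p^s+1$, and applies Theorem~\ref{schwarz} not to $L^{n_{i+1}}(p^{i+1})$ but to the strictly larger lens space $L^m(p^{i+1})$ with $m=n_{i+1}+1\equiv 2\pmod p$; the hypothesis of Problem~\ref{Pr3}(2) gives $Sg(\pi^m_i)=\frac{m-2}{p}+2\le n_i$, hence an essential map $f^{i+1}_i:L^m(p^{i+1})\to L^{n_i}(p^i)$. The bonding map $q^{i+1}_i$ is then defined as the restriction of $f^{i+1}_i$ to the sub-lens space $L^{n_{i+1}}(p^{i+1})\subset L^m(p^{i+1})$. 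This restriction is still essential (the inclusion is an isomorphism on $\pi_1$), but on universal covers it factors as $S^{2n_{i+1}-1}\hookrightarrow S^{2m-1}\to S^{2n_i-1}$, and the inclusion of a proper subsphere is null-homotopic since $2n_{i+1}-1<2m-1$. So \emph{every single} lifted bonding map is null-homotopic, and inessentiality of the sphere sequence comes for free --- no homotopy-theoretic computation is needed. Note that your parameters cannot support this trick: with $n_i=p^i+2$, the smallest $m>n_{i+1}$ with $m\equiv 2\pmod p$ is $n_{i+1}+p$, which gives $\frac{m-2}{p}+2=p^i+3>n_i$. The correction term $-\sum_{s=1}^{i-1}p^s$ in the paper's choice of $n_i$ is exactly the price of the one extra dimension of slack at every stage; engineering parameters that sustain that slack indefinitely is the real content of the proof.
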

\begin{proof}
We take $k_i=i$ and $n_i=3p^i-\sum_{s=1}^{i-1}p^s+1$.
Clearly, the condition $n_i>p^{k_i-k_0}$ is satisfied.
Note that $m=n_{i+1}+1$ equals 2 $mod\ p$ and
$\frac{m-2}{p}+2=3p^i-\sum_{s=1}^{i-1}p^s-1\le n_i$.
If $Sg(\pi_i^m)=\frac{m-2}{p}+2$, then by Theorem~\ref{schwarz} there is an essential map
$f^{i+1}_i:L^m(p^{i+1})\to L^{n_i}(p^i)$. We define $q^{i+1}_i:L^{n_{i+1}}(p^{i+1})\to L^{n_i}(p^i)$
to be the restriction of $f^{i+1}_i$ to $L^{n_{i+1}}(p^{i+1})\subset L^m(p^{i+1})$.
Then the corresponding sequence of spheres will be inessential.
\end{proof}

\section{A reduction of the weak Hilbert-Smith conjecture}

\subsection{Dimension, LS-category, and cup-length}
We recall that a topological space $X$ has the Lusternick-Schnirelmann category (LS-category for short) $cat(X)\le n$ if there is 
an open cover $U_0,\dots, U_n$ of $X$ by $n+1$ contractible in $X$ sets. It's known that in the case of ANR, it suffices to take closed $U_i$s 
or even arbitrary~\cite{Sr}.
We refer to ~\cite{CLOT} for general properties of  the LS-category. The basic properties are that $cat(X)$ is a homotopy invariant,
it is bounded from above by dimension $\dim X$, and from below by the length of a nonzero cup product of nonzero dimensional elements
in cohomology. It is known that the cohomology in this cup product could be generalized or even 0-dimensional if one uses
a reduced cohomology theory defined by means of  a spectrum~\cite{Sw}.
\begin{prop}
The LS-category of a finite connected complex is greater than or equal to
the cup-length for any reduced generalized cohomology theory $\tilde h^*$.
\end{prop}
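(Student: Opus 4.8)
The plan is to work directly from the open-cover definition of LS-category together with the relative product in the multiplicative theory $\tilde h^*$. Concretely, I would show that if $\cat(X)\le n$ then every product $\alpha_0\cdots\alpha_n$ of $n+1$ classes $\alpha_i\in\tilde h^*(X)$ vanishes, which is exactly the inequality $\cuplength\le\cat(X)$ being asserted. The point of passing to a \emph{reduced} theory is that it is precisely what forces each $\alpha_i$ to restrict to zero over a set that is contractible in $X$, so that the argument covers even $0$-dimensional generalized classes.

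First I would fix a cover $U_0,\dots,U_n$ of $X$ by sets that are contractible in $X$; since a finite complex is an ANR, I may take the $U_i$ closed (and, after a cellular adjustment justified by homotopy invariance of $\cat$, subcomplexes), so that the pairs $(X,U_i)$ are CW pairs. Each inclusion $U_i\hookrightarrow X$ is homotopic to the constant map at the basepoint, so for a reduced class $\alpha_i\in\tilde h^*(X)$ the restriction $\alpha_i|_{U_i}$ factors through the basepoint and therefore vanishes in $\tilde h^*(U_i)$. By the long exact sequence of the pair $(X,U_i)$ the class $\alpha_i$ then lifts to a relative class $\bar\alpha_i\in h^*(X,U_i)$ mapping to $\alpha_i$ under $h^*(X,U_i)\to\tilde h^*(X)$. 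Next I would invoke the relative product of the multiplicative theory: internalizing the external relative pairing along the diagonal gives products $h^*(X,A)\otimes h^*(X,B)\to h^*(X,A\cup B)$ compatible with the absolute product. Iterating over $i=0,\dots,n$, the lifts $\bar\alpha_0,\dots,\bar\alpha_n$ multiply to an element of $h^*(X,\,U_0\cup\cdots\cup U_n)=h^*(X,X)=0$, the last equality coming from the exact sequence of $(X,X)$ and the fact that the $U_i$ cover $X$. By naturality this relative product maps to $\alpha_0\cdots\alpha_n$ in $\tilde h^*(X)$, forcing the latter to be zero.

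The main obstacle is the construction and compatibility of the relative product for an \emph{arbitrary} multiplicative reduced theory. For ordinary cohomology this is textbook, but for a theory represented by a ring spectrum one must build the external relative pairing $h^p(X,A)\otimes h^q(Y,B)\to h^{p+q}(X\times Y,\,X\times B\cup A\times Y)$ out of the ring multiplication and verify its naturality together with its compatibility with the restriction maps to the absolute theory. A cleaner route that bypasses relative products altogether uses the characterization (see~\cite{CLOT}) that $\cat(X)\le n$ makes the diagonal $X\to X^{n+1}$ compress into the fat wedge; collapsing the fat wedge then shows the reduced diagonal $X\to X^{\wedge(n+1)}$ is null-homotopic. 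Representing $\alpha_i$ by maps $a_i$ to suspensions of the spectrum, any $(n+1)$-fold reduced product factors as the reduced diagonal followed by $a_0\wedge\cdots\wedge a_n$ and the iterated spectrum multiplication, so nullity of the reduced diagonal immediately forces the product to vanish. I would present the relative-product argument as the main line and note this smash-product reformulation as the technically smoothest way to handle a general ring spectrum.
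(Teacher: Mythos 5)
Your main argument is essentially the paper's own proof: using that a reduced class restricts to zero on a set contractible in $X$, lift each $\alpha_i$ to a relative class $\bar\alpha_i\in h^*(X,U_i)$ via the long exact sequence, multiply these relative classes, and observe that the product lies in $h^*(X,U_0\cup\dots\cup U_n)=h^*(X,X)=0$ while mapping to $\alpha_0\cdots\alpha_n$ by naturality. The fat-wedge/reduced-diagonal reformulation you sketch at the end is a valid alternative way to handle a general ring spectrum, but your principal line of reasoning coincides with the paper's.
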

\begin{proof}
This proposition can be extracted from \cite{Ru}. We give an alternative proof, since it is short.
Assume that $w=\alpha_1\smile\dots\smile\alpha_k\ne 0$ in $\tilde h^*(X)$.
Suppose that $\cat X\le k-1$. Then there is a cover $U_1,\dots, U_k$ of $X$
by contractible in $X$ subcomplexes (for some subdivision). The long exact sequence of pair $(X,U_i)$
for th
e reduced $h$-cohomology and the fact that $\tilde h^*(X)\to \tilde h^*(U_i)$ are 0-homomorphisms
imply that the homomorphisms $\tilde h^*(X;U_i)\to \tilde h^*(X)$ are isomorphisms in all dimension.
Let $\bar\alpha_i$ denote the corresponding elements in $\tilde h^*(X;U_i)$. Then
the product $\bar w=\bar\alpha_1\smile\dots\smile\bar\alpha_k\ne 0$.
On the other hand, $\bar w\in \tilde h^*(X;U_1\cup\dots\cup U_k)=\tilde h^*(X;X)=0$. We have a contradiction.
\end{proof}
\begin{cor}\label{cl-dim-cw}
The  cup-length of a finite connected complex for any generalized reduced cohomology theory
$\tilde h^*$ does not exceed the dimension of the complex.
\end{cor}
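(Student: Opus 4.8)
The plan is to obtain the bound by composing two inequalities, treating the preceding Proposition as a black box. First I would invoke the Proposition just established: for a finite connected complex $X$ and any reduced generalized cohomology theory $\tilde h^*$, the nonvanishing of a $k$-fold cup product of positive-degree classes forces $\cat(X)\ge k$, so that
$$\cuplength_{\tilde h^*}(X)\le \cat(X).$$
It is important that the normalization conventions line up: the paper's convention $\cat(X)\le n$ (a cover by $n+1$ sets contractible in $X$) is the one under which a point has $\cat=0$, and the cup-length is counted so that a space carrying no nontrivial product has cup-length $0$. With these conventions the Proposition reads exactly as the displayed inequality, with no off-by-one adjustment needed.

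Second, I would apply the classical estimate $\cat(X)\le\dim X$ for a finite complex, which is recorded among the basic properties of LS-category in Section 5.1. Chaining the two inequalities gives
$$\cuplength_{\tilde h^*}(X)\le \cat(X)\le \dim X,$$
which is the assertion. The only step carrying content beyond the Proposition is $\cat(X)\le\dim X$, and this is standard: passing to the barycentric subdivision, each vertex is the barycenter of a simplex and may be colored by that simplex's dimension, a proper coloring in $\dim X+1$ colors since the vertices of any subdivided simplex come from a strictly increasing chain. Grouping the open stars by color yields $\dim X+1$ open sets, each a disjoint union of contractible open stars; since $X$ is connected, each such union is null-homotopic in $X$, so $X$ is covered by $\dim X+1$ sets contractible in $X$.

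Because both ingredients are already in hand — the Proposition is proved immediately above and the dimension bound is a cited basic property — there is no genuine obstacle here; the corollary is essentially an immediate consequence, and the proof is just the two-step chain of inequalities above.
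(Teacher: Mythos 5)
Your proof is correct and is essentially the paper's own argument: the corollary is stated there without proof precisely because it follows by chaining the preceding Proposition (cup-length $\le \cat(X)$) with the bound $\cat(X)\le\dim X$, which the paper records as a basic property of the LS-category in Section 5.1. Your extra verification of $\cat(X)\le\dim X$ via colored open stars of the barycentric subdivision is a correct proof of that cited standard fact, but it adds nothing beyond what the paper takes as known.
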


\begin{thm}\label{dim-cup}
For a finite dimensional compact metric connected space $X$, the cup-length for
any reduced generalized cohomology theory does not exceed $\dim X$.
\end{thm}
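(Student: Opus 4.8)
The plan is to deduce the compact metric case from the finite-complex case (Corollary~\ref{cl-dim-cw}) by approximating $X$ with its nerves. Since $X$ is finite-dimensional compact metric, it can be written as an inverse limit $X=\lim_{\leftarrow}\{N_i,\rho^{i+1}_i\}$ of nerves of a cofinal sequence of finite open covers with mesh tending to zero, where each $N_i$ is a finite complex of dimension at most $\dim X$ (this is the standard consequence of the coincidence of covering dimension with the dimension of nerves for covers of order $\le \dim X+1$). Let $\rho_i:X\to N_i$ denote the projection to the $i$-th nerve. The first step is to record that generalized reduced cohomology is continuous on such limits, i.e.\ $\tilde h^*(X)=\lim_{\rightarrow}\tilde h^*(N_i)$, which holds because $\tilde h^*$ is represented by a spectrum and $X$ is a compact metric (hence \v{C}ech) limit of finite complexes.

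Now suppose $w=\alpha_1\smile\dots\smile\alpha_k\ne 0$ in $\tilde h^*(X)$ with $k>\dim X$; I want a contradiction. By continuity each $\alpha_j$ is pulled back from some nerve: $\alpha_j=\rho_{i_j}^*(\beta_j)$ for some class $\beta_j\in\tilde h^*(N_{i_j})$. Choosing an index $i$ beyond all the $i_j$ and using the bonding maps to transport all the $\beta_j$ to $N_i$, I may assume that every factor is pulled back from a single nerve, $\alpha_j=\rho_i^*(\gamma_j)$ with $\gamma_j\in\tilde h^*(N_i)$. Naturality of the cup product then gives
\[
w=\rho_i^*(\gamma_1\smile\dots\smile\gamma_k).
\]
Since $w\ne 0$, the cup product $\gamma_1\smile\dots\smile\gamma_k$ is a nonzero $k$-fold product in $\tilde h^*(N_i)$, so the reduced $\tilde h^*$-cup-length of the finite complex $N_i$ is at least $k$. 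But $\dim N_i\le\dim X<k$, contradicting Corollary~\ref{cl-dim-cw}. Hence no such $w$ exists and the cup-length of $X$ is at most $\dim X$.

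The main obstacle is the representability/continuity step: one must be sure that an arbitrary nonzero element of $\tilde h^*(X)$ actually factors through a finite stage of the inverse system, and that this can be arranged simultaneously for all $k$ factors after passing to a common later nerve. This is exactly the continuity (Milnor-type $\lim_{\rightarrow}$) property for a spectrum-represented cohomology theory on a compact metric space realized as a limit of finite complexes; I would either cite the standard continuity theorem for \v{C}ech-type generalized cohomology on compacta or verify it directly from the fact that $[X,\Omega^\infty\Sigma^\infty\cdots]$ computes as a colimit of $[N_i,-]$ under the mapping telescope. Everything else---the dimension bound on nerves and the naturality of products---is routine, so the content is concentrated in reducing to the finite-complex inequality already established.
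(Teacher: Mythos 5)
Your proof is correct and takes essentially the same route as the paper: write $X$ as an inverse limit of compact polyhedra of dimension at most $\dim X$, use continuity of $\tilde h^*$ to pull the $k$ factors back from a single finite stage, apply naturality of the cup product, and conclude via Corollary~\ref{cl-dim-cw}. The only difference is that you make explicit the continuity and common-stage arguments that the paper's proof states without elaboration.
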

\begin{proof}
Let $\dim X=n$. Then $X$ can be presented as the inverse limit of a
sequence of $n$-dimensional polyhedra $X=\lim_{\leftarrow}L_m$. If
$$\alpha_1\smile\dots\smile\alpha_k\ne 0$$ in $\tilde h^*(X)$, then
there is $m$ such that $\alpha_i=p_m^*(\beta_i)$, $i=1,\dots,k$, and
$\beta_1\smile\dots\smile\beta_k\ne 0$ where $p_k:X\to L_k$ is the
projection in the inverse system. By Corollary~\ref{cl-dim-cw}, $k\le n$.
\end{proof}

\subsection{Injectivity conjecture}

The Chapman-Ferry $\alpha$-approximation theorem~\cite{CF2} has
severer versions. For instance Theorems 1, 2, 3, 4 in~\cite{F3} are all variations of that. One of the weakest version
states that for a fixed metric on a closed manifold $M$ for every $\delta>0$ there is $\epsilon>0$ such that every $\epsilon$-map $f:M\to M$ is  $\delta$-homotopy equivalence.
The following conjecture is a parametrized version of this version of the $\alpha$-approximation theorem with a fixed space of parameters $B$.
Since we don't assume that $B$ is nice, we replace in our conjecture the homotopy equivalence 
by a shape equivalence.

Let $F$ be a compact metric space. We say that a fibration $p:E\to B$ is a {\em fibration with isometric fibers}
$F$ if there is a metric on $E$ such that
all fibers $p^{-1}(x)$, $x\in B$, are isometric to $F$. 

\begin{conjec}[Parametrized $\alpha$-Approximation Conjecture]\label{Co1}
For every compact manifold $M$ (or $Q$-manifold) with a fixed metric on it and any connected and locally connected
compact space $B$ there is $\epsilon>0$ such that for any
Hurewicz fibration $p:E\to B$ with isometric fibers $M$ every fiberwise $\epsilon$-map $f:E\to M\times B$ 
is a shape equivalence. In particular, it induces an isomorphism of generalized cohomology groups.
\end{conjec}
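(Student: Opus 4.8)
The plan is to localize the classical Chapman--Ferry $\alpha$-approximation theorem \cite{CF2} to the fibers and then to assemble the resulting fiberwise homotopy equivalences into a global shape equivalence over the (possibly very bad) base $B$. Fix the metric on $M$ and let $\delta>0$ be small; by the non-parametrized $\alpha$-approximation theorem there is $\epsilon_0>0$ so that every $\epsilon_0$-map $M\to M$ is a $\delta$-homotopy equivalence. For a fiberwise $\epsilon$-map $f\colon E\to M\times B$ with $\epsilon\le\epsilon_0$, the restriction $f_x\colon p^{-1}(x)\to M$ to each fiber is an $\epsilon$-map between isometric copies of $M$, hence a $\delta$-homotopy equivalence; because the metric on the fibers is the same for all $x$, the homotopy inverses $g_x$ and the tracking homotopies can be taken $\delta$-small uniformly in $x$. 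This is the only place the isometry hypothesis is used, and it delivers uniform fiberwise control.

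Second, I would observe that if $B$ were an ANR (equivalently, of the homotopy type of a CW complex) the conjecture would follow at once: a fiberwise map between Hurewicz fibrations that restricts to a homotopy equivalence on each fiber is, by Dold's fiber homotopy equivalence theorem, a fiber homotopy equivalence, whence $f$ is a genuine homotopy equivalence $E\simeq M\times B$ and \emph{a fortiori} a shape equivalence inducing isomorphisms in every cohomology theory. Thus the entire difficulty is concentrated in the badness of $B$, and the strategy is to approximate $B$ by polyhedra and push the ANR argument to the limit. Concretely, present $B=\lim_{\leftarrow}\{K_i,g^{i+1}_i\}$ as an inverse limit of nerves of finite connected open covers of mesh tending to $0$, embed $E$ and $M\times B$ in the Hilbert cube, and realize their shapes through cofinal inverse systems of ANR-neighborhoods $N_i\supset E$ and $N'_i\supset M\times B$ indexed compatibly with the $K_i$.

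Third, over each finite level I would construct a homotopy inverse to the neighborhood extension of $f$, using the parametrized $\alpha$-approximation theorems of Ferry \cite{F3} over the ANR $K_i$ together with the uniform fiberwise control from the first step; these level-wise inverses, being $\delta_i$-controlled with $\delta_i\to 0$, should assemble into a morphism of pro-spaces two-sided inverse to $f$ in the pro-homotopy category. Passing to the inverse limit then yields a shape inverse to $f$, so $f$ is a shape equivalence; since Steenrod--\v{C}ech generalized cohomology is a shape invariant, $f^*$ is an isomorphism in every reduced generalized cohomology theory, as claimed.

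The main obstacle is the coherence of these fiberwise homotopy inverses across the levels of the inverse system over a base that supports no homotopies of its own. The parametrized theorem produces an inverse over each $K_i$ only up to controlled homotopy, so one must arrange, by shrinking $\epsilon$ and refining the covers, that the bonding map $g^{i+1}_i$ carries the level-$(i+1)$ inverse to the level-$i$ inverse up to a homotopy whose tracks shrink with the mesh; equivalently one needs a uniqueness-up-to-small-homotopy (stability) statement for controlled homotopy inverses, compatible with a diagonal mesh argument, together with the independence of the constructed neighborhood systems' homotopy type from the choices made. Establishing this coherence is the delicate point, and it is presumably what separates the present statement from a theorem.
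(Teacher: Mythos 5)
This statement is a conjecture in the paper, not a theorem: the text explicitly says ``The $\alpha$-Approximation Conjecture seems out of reach'' and never attempts a proof, working instead with the weaker Injectivity Conjecture, which it then only \emph{reduces} to the Hurewicz Fibration Problem and to an extension-theoretic question about the moduli space $\mathcal M(F)$ of embedded copies of the fiber. So there is no proof in the paper to compare yours against; the question is whether your argument closes the conjecture, and it does not. Your first two steps are sound as far as they go: the fiberwise application of the non-parametrized $\alpha$-approximation theorem, with uniformity supplied by the isometric-fibers hypothesis, and the observation that over an ANR base Dold's fiber homotopy equivalence theorem (applicable since Hurewicz fibrations over a compact ANR are numerable) finishes the proof. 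But the conjecture is only interesting when $B$ is \emph{not} an ANR --- in the intended application $B$ is an inverse limit of lens spaces, a rough classifying space for $A_p$ --- and that is exactly where your third step breaks down.

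Concretely: a Hurewicz fibration $p\colon E\to B$ over $B=\lim_{\leftarrow}\{K_i,g^{i+1}_i\}$ is not induced from, and does not push forward to, any fibration over the nerves $K_i$. The projection $B\to K_i$ gives no map $E\to K_i$ with any lifting property, and the ANR neighborhoods $N_i\supset E$, $N_i'\supset M\times B$ in the Hilbert cube carry no extension of the fibration structure. Hence the objects to which you want to apply Ferry's parametrized theorems over each $K_i$ --- the ``level-$i$ inverse'', the ``neighborhood extension of $f$'' as a fiberwise map --- do not exist in the first place; the difficulty is not the \emph{coherence} of level-wise inverses across the tower, but the absence of the level-wise inverses themselves. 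This is precisely the peculiarity the paper flags (a fibration over a bad $B$ has no local or polyhedral structure to exploit), and it is why the paper's own machinery is so different: it replaces fibrations by completely regular maps, classifies those by maps into $\mathcal M(F)$ via Proposition~\ref{universal}, and converts everything into extension problems for $\mathcal M(F)$. You acknowledge the gap yourself in your final sentence, and that assessment is correct: what you have is a plausible identification of the core difficulty of the conjecture, not a proof of it.
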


The $\alpha$-Approximation Conjecture seems out of reach. For applications to the Hilbert-Smith conjecture
it suffices to prove the following.

\begin{conjec}[Injectivity Conjecture]\label{WCo1}
For every closed manifold $M$ (or $Q$-manifold) with a fixed metric on it, any connected and locally connected
compact space $B$, and any nonzero element $\alpha\in h^*(B)$  for some generalized cohomology theory $h^*$
there is $\epsilon>0$ such that for every
Hurewicz fibration $p:E\to B$ with isometric fibers $M$ and every fiberwise $\epsilon$-map $f:E\to M\times B$, 
 the image $f^*\pi^*(\alpha)\ne 0$ where $\pi:M\times B\to B$ is the projection.
\end{conjec}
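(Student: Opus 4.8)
The plan is to exploit the fact that $f$ is a map over $B$, so that $\pi\circ f=p$ and hence $f^*\pi^*=p^*$ (even if $f$ only commutes with the projections up to fiberwise homotopy, this identity on $h^*$ survives). Thus the conclusion $f^*\pi^*(\alpha)\neq0$ is really a statement about the fibration $p\colon E\to B$ alone, namely $p^*(\alpha)\neq0$, with the $\epsilon$-map $f$ serving only to certify that $E$ is fiberwise close to the trivial bundle $M\times B$. I would first record the trivial half: since $M\neq\emptyset$, the projection $\pi\colon M\times B\to B$ admits a section $s(b)=(m_0,b)$, so $s^*\pi^*=\mathrm{id}$ and $\pi^*$ is split injective; in particular $\pi^*(\alpha)\neq0$. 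The entire problem is to transport this injectivity across $f$, and the point of the \emph{Injectivity} Conjecture being weaker than Conjecture~\ref{Co1} is that one wants to transport it for a single class $\alpha$ rather than establish a global shape equivalence.

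The mechanism I would use is fiberwise $\alpha$-approximation followed by a comparison of Serre spectral sequences. Restricting $f$ to a fiber and using the isometry $p^{-1}(b)\cong M$ yields an $\epsilon$-map $M\to M$; by the non-parametrized $\alpha$-approximation theorem quoted in the text, for $\epsilon$ small enough each such restriction is a $\delta$-homotopy equivalence, and the \emph{same} $\epsilon$ serves over every fiber precisely because all fibers are isometric to the one manifold $M$. Hence $f|_{p^{-1}(b)}^*$ is an isomorphism on $h^*$ for all $b$, so the fiber-preserving map $f$ induces a morphism of Atiyah--Hirzebruch--Serre spectral sequences of the two fibrations over $B$ that is an isomorphism on the $E_2$-terms $\check H^*(B;h^*(M))$ (the map of local coefficient systems $b\mapsto h^*(p^{-1}(b))$ being an isomorphism). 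By the Zeeman comparison theorem this is then an isomorphism on $E_\infty$. On the trivial side the edge homomorphism is $\pi^*$, which is injective, so $\alpha$ survives to a nonzero class in $E_\infty^{*,0}$; transporting it through the $E_\infty$-isomorphism identifies its image with $p^*(\alpha)$ on the edge for $p$, which is therefore nonzero in $h^*(E)$. Note this route uses only that $f$ is a cohomology equivalence on each fiber, never a global inverse.

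The hard part will be making the spectral-sequence comparison legitimate over a base $B$ that is merely connected, locally connected and compact metric --- exactly the ``bad base'' difficulty flagged in the paper. One must work with a \v{C}ech/continuous form of the generalized Serre spectral sequence, presenting $B=\lim_\leftarrow L_m$ as in the nerve construction of Section~\ref{}, control the coefficient system in the absence of any local triviality or ``interesting homotopies'' on $B$, and verify convergence together with the mapping theorem in this setting (watching for $\lim^1$ phenomena, as in the continuity argument behind Theorem~\ref{dim-cup}). Securing a well-defined spectral sequence with the expected fiber is essentially asking that $p$ behave like an honest local fibration, and this is the point where I expect the argument to reduce to the Hurewicz Fibration Problem and thence to the ANRness of the relevant classifying space. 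A more geometric alternative would replace the spectral sequence by Dold's fiber-homotopy-equivalence theorem, promoting the pointwise fiber homotopy equivalences to a fiberwise inverse $g$ with $p\circ g\simeq_B\pi$, so that $g^*p^*(\alpha)=\pi^*(\alpha)\neq0$ forces $p^*(\alpha)\neq0$; but Dold's theorem requires \emph{local} fiber homotopy equivalences over a numerable cover, and manufacturing those over a base with no good local structure is the same obstruction in another guise.
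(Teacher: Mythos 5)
You should first be clear about the status of what you set out to prove: the Injectivity Conjecture is one of the two open conjectures around which this paper is organized, not one of its results. The paper contains no proof of it; it proves only conditional implications --- that positive answers to Question~\ref{Hurewicz} together with Question~\ref{contraction} imply it, and that an affirmative answer to Problem~\ref{anr} implies it. So there is no proof of the paper's to compare yours against, and a complete argument here would be a substantial theorem settling the paper's central open problem.

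Your proposal is not such an argument, and the gap in it is exactly the open core of the conjecture. The steps you actually carry out are correct but are the easy ones: since $f$ is fiberwise, $\pi\circ f=p$, hence $f^*\pi^*=p^*$ and the claim is that $p^*(\alpha)\neq 0$ (the paper uses this same identification in the proof of its final theorem), and the $\alpha$-approximation theorem does make each restriction $f|_{p^{-1}(b)}$ an $h^*$-isomorphism, with one $\epsilon$ serving for all $b$ because the fibers are isometric to $M$. Everything after that is the conjecture itself. Over the bases that matter, $B$ is merely a Peano continuum; in the intended application it is $B=\lim_{\leftarrow}L^{n_i}(p^{k_i})$ from Problem~\ref{Pr2}, which has unbounded $\tilde{K}^0$ cup-length (Corollary~\ref{inf-prod}), hence is infinite-dimensional, and has no semilocal simple connectivity. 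For such a base there is no Atiyah--Hirzebruch--Serre spectral sequence to compare: the monodromy functor exists, but there is no theorem identifying an $E_2$-term with $\check{H}^*(B;\mathcal{H})$ absent a CW or locally nice structure, and --- even granting an $E_2$-term formally --- no convergence theorem; for a periodic theory such as $K^0$ over an infinite-dimensional compactum, strong convergence is precisely what fails. This is the same phenomenon behind the cell-like maps with nontrivial kernels in K-homology cited from [DFW], and behind the cell-like Hurewicz fibrations with nontrivial cokernel in $K^0$ that the paper predicts (via Proposition~\ref{classif}) if the ELS problem has a positive solution; in the very world this paper works in, ``fiberwise $h^*$-isomorphism implies isomorphism on total spaces'' is expected to be \emph{false} for fibrations over such compacta. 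Zeeman comparison without convergence yields nothing about $h^*(E)$, and Dold's theorem fails for the reason you yourself state. The only plausible way to manufacture either tool is to push $p$ down to a polyhedral stage $L_m$ of $B=\lim_{\leftarrow}L_m$, and the impossibility of doing that directly is exactly what the ANE hypothesis of Problem~\ref{anr} is designed to overcome (it lets one extend the classifying map into $\mathcal{M}(M)$ of the associated completely regular map over a polyhedral neighborhood). So your plan does not bypass the paper's reductions; it terminates in the same open problems, as you concede.

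One further structural point: your mechanism aims at too much. If it worked, it would show that $f^*$ is an isomorphism on all of $h^*$ with $\epsilon$ depending only on $M$ --- the cohomological content of the Parametrized $\alpha$-Approximation Conjecture~\ref{Co1}, which the paper explicitly describes as out of reach. The Injectivity Conjecture was deliberately formulated to be weaker: one class $\alpha$ at a time, with $\epsilon$ allowed to depend on $\alpha$ and on $B$. A successful attack presumably must exploit that weakening; your argument never does.
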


We note that the $\alpha$-approximation theorem holds true for Hilbert cube manifolds ($Q$-manifolds).
Thus, it makes sense to extend the Injectivity Conjecture to $Q$-manifolds as well.
We note that the Injectivity Conjecture for $Q$-manifolds implies the Injectivity Conjecture for ordinary manifolds
via multiplication by the Hilbert cube.

Let $G$ be a compact metrizable topological group. The orbit space of a free $G$-action on a Peano continuum of a trivial shape will be called   a {\em rough classifying space} for $G$. 
\begin{cor}[Corollary of the Injectivity Conjecture]\label{C1}
Let $B$ be a  rough classifying space for $A_p$. Suppose that $A_p$ acts on a compact manifold  (or $Q$-manifold) $M$.
Then for any nonzero $\alpha\in K^0(B)$ there is $k$ such that $p_M^*(\alpha)\ne 0$ where $p_M:M\times_{A_p}E\to B$ is the projection from
the Borel construction for the action on $M$
of the subgroup $p^kA_p\cong A_p$.
\end{cor}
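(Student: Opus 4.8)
The plan is to realize $p_M\colon M\times_{A_p}E\to B$ as a Hurewicz fibration with fiber $M$ which, as $k$ grows, becomes arbitrarily close to the trivial fibration $M\times B\to B$, and then to feed this into the Injectivity Conjecture (Conjecture~\ref{WCo1}) with $h^*=K^0$. Here $A_p$ acts freely on the Peano continuum $E$ with $E/A_p=B$, while on $M$ we use the action of $p^kA_p$ re-indexed as an $A_p$-action through the isomorphism $A_p\cong p^kA_p$; thus the copy of $A_p$ acting on $E$, and hence the base $B$ and the class $\alpha$, stays fixed, and only the action on $M$ is ``diluted'' by the factor $p^k$. Since the action on $E$ is free, the Borel construction of Section 2.2 shows that $p_M$ is a Hurewicz fibration whose fiber is $M$. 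First I would replace the given metric on $M$ by an $A_p$-invariant one, obtained by averaging a Riemannian metric over the Haar measure of $A_p$; then $A_p$ acts by isometries, the fiber metric of $p_M$ is well defined independently of the chosen point in an $E$-orbit, and $p_M$ becomes a fibration with isometric fibers $M$, as required by Conjecture~\ref{WCo1}.

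The heart of the argument is to construct, for each $k$, a fiberwise map $f_k\colon M\times_{A_p}E\to M\times B$ over $B$ that is a fiberwise $\epsilon_k$-map with $\epsilon_k\to 0$. Set $\delta(k)=\sup\{d_M(hm,m)\mid h\in p^kA_p,\ m\in M\}$; since the action map $A_p\times M\to M$ is continuous, $M$ is compact, and the subgroups $p^kA_p$ form a neighborhood basis of the identity, we get $\delta(k)\to 0$. Because the free $A_p$-action is the inverse limit of the free actions of the finite groups $\Z_{p^k}=A_p/p^kA_p$ (so $E\to B$ is the inverse limit of the $\Z_{p^k}$-coverings $E/p^kA_p\to B$), the orbit map $E\to B$ is a principal $A_p$-bundle and admits local sections. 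I would then choose a finite cover $\{U_\alpha\}$ of $B$ with sections $s_\alpha\colon U_\alpha\to E$ and a subordinate partition of unity $\{\rho_\alpha\}$. For $\xi=[m,e]$ over $b=[e]$, writing $e=h_\alpha s_\alpha(b)$ produces a local coordinate $m_\alpha=(p^kh_\alpha)^{-1}m\in M$; on overlaps the $h_\alpha$ differ by the $A_p$-valued transition functions $g_{\beta\alpha}$ of $E\to B$, so that $m_\beta=(p^kg_{\beta\alpha}(b))\cdot m_\alpha$ with $p^kg_{\beta\alpha}(b)\in p^kA_p$, whence $d_M(m_\alpha,m_\beta)\le\delta(k)$. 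Defining $f_k(\xi)=(F_k(\xi),b)$ with $F_k(\xi)$ the weighted barycenter of $\{m_\alpha\}$, legitimate once these points lie in a common convex ball, i.e. for $k$ large, I obtain a map whose restriction to each fiber is within a multiple of $\delta(k)$ of the identity, hence a fiberwise $\epsilon_k$-map with $\epsilon_k\to0$.

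Finally, applying Conjecture~\ref{WCo1} to the nonzero class $\alpha\in K^0(B)$ produces an $\epsilon>0$; choosing $k$ with $\epsilon_k<\epsilon$ gives $f_k^*\pi^*(\alpha)\ne0$, and since $f_k$ is fiberwise we have $\pi\circ f_k=p_M$, so $p_M^*(\alpha)=f_k^*\pi^*(\alpha)\ne0$, as claimed. The main obstacle is the construction of $f_k$: one must interpolate the nearby local coordinates $m_\alpha$ into a single globally defined fiberwise map over the possibly badly behaved base $B$. For a Riemannian $M$ the Karcher barycenter accomplishes this once the $m_\alpha$ are close, but for a $Q$-manifold there is no metric convexity, and one must replace the barycenter by an ANR interpolation exploiting the uniform local contractibility of $M$; checking that this still yields a genuine fiberwise $\epsilon_k$-map is the delicate point.
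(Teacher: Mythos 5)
Your overall strategy is the paper's: average the metric on $M$ over the Haar measure of $A_p$ to get isometric fibers, view $p_M$ as a Hurewicz fibration via the Borel construction, produce fiberwise $\epsilon_k$-maps $M\times_{A_p}E\to M\times B$ with $\epsilon_k\to 0$ from the fact that the $p^kA_p$-orbits in $M$ have diameter $\delta(k)\to 0$, and feed this into Conjecture~\ref{WCo1}. The fatal step is the one you treat as automatic: the claim that the orbit map $q_E\colon E\to B$ is a principal $A_p$-bundle admitting local sections ``because it is the inverse limit of the $\Z_{p^k}$-coverings $E/p^kA_p\to B$''. An inverse limit of coverings over a fixed base is locally trivial only if each point has a neighborhood that is evenly covered in \emph{all} of them simultaneously --- a semi-local simple connectivity condition that a rough classifying space has no reason to satisfy. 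In fact the claim is provably false in exactly the setting of the Corollary. Since $E$ is a Peano continuum it is path connected, and $q_E$ has unique path and homotopy lifting along paths (being a limit of coverings), so the monodromy homomorphism $\mu\colon\pi_1(B,b_0)\to A_p$ is \emph{surjective}: for any $g\in A_p$, a path in $E$ from $e_0$ to $ge_0$ projects to a loop with $\mu=g$. On the other hand, if sections $s_i$ existed over a finite open cover $\{U_i\}$ of $B$, the transition functions $g_{ij}\colon U_i\cap U_j\to A_p$ (continuous, since the translation function of a free compact-group action on a compactum is continuous) would be constant on path components of $U_i\cap U_j$, because $A_p$ is totally disconnected while $B$, being Peano, is locally path connected; there are only countably many such components, and a Lebesgue-number decomposition of an arbitrary loop then places the image of $\mu$ inside the countable subgroup generated by the values of the $g_{ij}$ --- contradicting surjectivity onto the uncountable group $A_p$. (Compare the solenoid $\Sigma_p\to S^1$, which \emph{is} a locally trivial $A_p$-bundle precisely because $\Sigma_p$ is not path connected.) So no system of local sections exists, and the local-coordinate/partition-of-unity construction of $f_k$ cannot even get started.

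The paper obtains the fiberwise $\epsilon_k$-map without any sections: invert the quotient map $M\times E\to E_k=M\times_{A_p}E$ as a multivalued map and project to $M$; the image of a point of $E_k$ lies in a single $p^kA_p$-orbit of $M$, so this multivalued retraction has point-images of diameter at most $\delta(k)\to 0$, and since $M$ is a compact ANR such a small upper semicontinuous map admits a nearby continuous single-valued approximation, which together with $p_M$ is the required fiberwise $\epsilon_k$-map. Your key estimate $\delta(k)\to 0$ is exactly the input this argument needs, so that part of your work survives; the scheme built on local sections does not. A secondary, reparable point: one cannot average a Riemannian tensor over a merely topological action --- one averages the distance function, as the paper does --- so the invariant metric is not Riemannian and Karcher barycenters are unavailable even for smooth $M$; the ANR-interpolation fix you reserve for $Q$-manifolds is needed in all cases, and is essentially what the paper's multivalued-map argument accomplishes.
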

\begin{proof} Let $\mu$ be an invariant measure on $A_p$.
Integration on $A_p$ with respect to $\mu$ of a metric $d$ on $M$ gives
an $A_p$-invariant metric on $M$: $\rho(x,y)=\int_{A_p} d(gx,gy)d\mu$. 
We take any metric $d'$ on $E$ and consider the $\ell_1$-product metric $\rho+d'$ on $M\times E$.
This defines the quotient metric on each of the orbit spaces $E_k=M\times_{A_p}E$ for the diagonal action of $A_p$ where for the action on $M$ the group $A_p$ is identified with $p^kA_p$.
Thus, $p_M:E_k\to B$ is a fibration with isometric fibers $M$.

We claim that for large $k$ the total space $E_k=M\times_{A_p}E$ of the Borel construction of the action of $p^kA_p$ on $M$ admits a retraction onto a fiber $M$ the restriction of which to any other fiber
is an $\epsilon_k$-map with $\epsilon_k\to 0$. There are several ways to argue for this. We leave the proof to
the reader. One approach would be that the composition of the inverse to the quotient map $M\times E\to E_k$
followed by a  retraction $r:M\times E\to M$ to a fiber defines a multivalued such retraction with the diameter
$d_k$ of the images of points tending to 0. That with fact that $M$ is ANR would be sufficient to derive our claim.

Then the Injectivity Conjecture implies the required result.
\end{proof}

The following is the main result of the paper:

\begin{thm}
Suppose that  the Injectivity Conjecture hods true  and there exists an infinite sequence of lens  spaces
as in the Essential Lens Sequence problem.
Then there is no free  $A_p$-action on  a closed manifold  with a finite dimensional orbit space.
\end{thm}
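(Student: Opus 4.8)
The plan is to argue by contradiction: assume $A_p$ acts freely on a closed manifold $M$ with $\dim M/A_p=d<\infty$, and produce a space whose reduced $K$-theory cup-length is at once bounded by $d$ and forced to exceed it. First I would use the hypothesis of the Essential Lens Sequence Problem together with \propref{classif} to obtain the cell-like compactum $E$ with a free $A_p$-action and orbit space $B=E/A_p$, which after Ferry's modification is a Peano continuum and hence a rough classifying space for $A_p$. By \corref{inf-prod} the reduced $K$-theory cup-length of $B$ is unbounded; the relevant classes are the powers $\xi_i^m=q_i^*\big((\bar\eta_{k_i}-1)^m\big)$, which are nonzero as soon as $m<p^{k_i-k_0}$ by \propref{infinite}.

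Next I would form the Borel construction $E_a=M\times_{A_p}E$, with $A_p$ acting on $M$ through $p^aA_p\cong A_p$, and study its two projections: the Hurewicz fibration $p_M\colon E_a\to B$ with fiber $M$, and the projection $\pi\colon E_a\to M/A_p$. Because the $A_p$-action on $M$ is free, every fiber of $\pi$ equals $E$ and is therefore cell-like, so $\pi$ is a cell-like map. Here the finiteness of $\dim M/A_p$ enters decisively: a cell-like map onto a finite-dimensional compactum is a hereditary shape equivalence, so $\pi^*\colon\tilde K^0(M/A_p)\to\tilde K^0(E_a)$ is a ring isomorphism. Together with \theoref{dim-cup} this gives the upper bound $\cuplength\tilde K^0(E_a)=\cuplength\tilde K^0(M/A_p)\le d$, and this bound is uniform in $a$.

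For the opposing lower bound I would invoke the Injectivity Conjecture through \corref{C1}. Fixing $m=d+1$ and choosing $i$ with $p^{k_i-k_0}>d+1$, the class $\xi_i^m$ is nonzero in $\tilde K^0(B)$. \corref{C1} then furnishes an index $a$ with $p_M^*(\xi_i^m)\ne0$ in $\tilde K^0(E_a)$; since $p_M^*$ is multiplicative, $p_M^*(\xi_i^m)=(p_M^*\xi_i)^m$, whence $\cuplength\tilde K^0(E_a)\ge d+1$. Comparing this with the uniform upper bound $\le d$ yields the contradiction that proves the theorem.

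The step I expect to be the crux is the finite-dimensional cell-like input, namely that $\pi$ induces an isomorphism on reduced $K$-theory. This is precisely where the assumption $\dim M/A_p<\infty$ is indispensable, and it is what distinguishes the present situation from the infinite-dimensional examples noted in the introduction, where the analogous cell-like Hurewicz fibration may carry a nonzero cokernel in $K$-theory. I would justify it either by citing that cell-like maps onto finite-dimensional spaces are hereditary shape equivalences, hence induce isomorphisms on every \v{C}ech-extended generalized cohomology theory, or more concretely by writing $E_a=\lim_{\leftarrow}\big(M\times_{A_p}S^{2n_i-1}\big)$ as an inverse limit of finite-dimensional compacta, running a convergent Vietoris--Begle argument over the finite-dimensional base $M/A_p$, and passing to the colimit in continuous $K$-theory.
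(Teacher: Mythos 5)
Your proposal is correct and follows essentially the same argument as the paper: obtain $B$ and $E$ from the Essential Lens Sequence via \propref{classif}, use \corref{inf-prod} and \corref{C1} to push a long nonzero product from $\tilde K^0(B)$ into the Borel construction via $p_M^*$, observe that the other projection is cell-like with finite-dimensional range and hence a shape equivalence inducing an isomorphism on K-theory, and contradict \theoref{dim-cup}. The only cosmetic difference is that the paper invokes Yang's theorem to pin $\dim M/p^kA_p=n+2$ and takes a product of length $n+3$, whereas you work with a general finite dimension $d$ and the power $\xi_i^{d+1}$, which amounts to the same thing.
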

\begin{proof} 
Assume that there is such a free $A_p$-action on an $n$-manifold $M$ with $\dim M/A_p<\infty$. Then by the Yang's theorem $\dim M/A_p=n+2$.
Let $B$ and $E$ be as in Proposition~\ref{classif}.
We apply the Injectivity Conjecture to the product of a desired length $$\alpha=\alpha_1\cdot\dots\cdot\alpha_{n+3}\in\tilde K^0(B)$$
defined by Corollary~\ref{inf-prod}.
Then $p_M^*(\alpha)\ne 0$ for the action of $p^kA_p$ for some $k$. 

By Proposition~\ref{classif} the projection
$p_E$ from the Borel construction for that action is a cell-like map. Since $\dim M/p^kA_p<\infty$, it is a shape equivalence~\cite{La}
and hence it induces an isomorphism in $K$-theory.
Hence for each $i$ there is $\beta_i\in K^0(M/p^kA_p)$ such that $p_E^*(\beta_i)=p_M^*(\alpha_i)$.
Hence, $\beta_1\dots\beta_{n+3}\ne 0$. 
$$
\begin{CD}
M @<pr_{\R^n}<< M\times E @>pr_E>> E\\
@Vq_{M}VV @ Vq_{M\times E}VV @Vq_EVV\\
M/p^kA_p @<p_E<< M\times_{A_p}E @>p_M>> B.\\
\end{CD}
$$

Since $\dim M/(p^kA_p)=n+2$, we obtain a contradiction with Theorem~\ref{dim-cup}.
\end{proof}

\section{Hurewiz fibration problem}

\subsection{Completely regular maps}
Dyer and Hamstrom introduced the notion of a completely regular map in \cite{DH}.
We recall that a continuous surjection $p:E\to B$ between metric spaces is called {\em completely regular} if for
each $b\in B$ and $\epsilon>0$, there exists $\delta(b,\epsilon)>0$ such that if $d_B(b,b')<\delta$, then there exists a homeomorphism 
$h:p^{-1}(b)\to p^{-1}(b')$ with $d_E(x,h(x))<\epsilon$ for all $x\in p^{-1}(b)$. 
It is known that the complete regularity of a map
does not depend on choice of metrics $d_B$ and $d_E$.

Using Michael's selection theorem~\cite{M}, Dyer and Hamstrom proved the following theorem~\cite{DH}. 
\begin{thm}\label{DH}
\label{locally trivial}
Suppose that for a compact  $F$ the space $Homeo(F)$ is locally contractible. Then every completely regular map
$p:E\to B$ with fiber $F$ and a finite-dimensional $B$ is a locally trivial fibration.
\end{thm}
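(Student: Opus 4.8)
The plan is to prove the local statement, since local triviality of $p$ is a local property of $B$: I fix a point $b_0\in B$, set $F_0=p^{-1}(b_0)$, and seek a neighborhood $U\ni b_0$ together with a continuous family of homeomorphisms $g_b\colon F_0\to p^{-1}(b)$, $b\in U$. Once such a family is produced, the map $U\times F_0\to p^{-1}(U)$ given by $(b,x)\mapsto g_b(x)$ is a fiber-preserving homeomorphism, i.e.\ a local trivialization, and local triviality over all of $B$ follows. Thus everything reduces to finding a continuous selection of the set-valued carrier $\Phi$ that assigns to $b\in U$ the set $\Phi(b)\subset C(F_0,E)$ of all maps $F_0\to E$ that are homeomorphisms onto $p^{-1}(b)$, where $C(F_0,E)$ carries the sup-metric.

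First I would verify that $\Phi$ is lower semicontinuous, and this is exactly where complete regularity is used. Given $g_0\in\Phi(b)$ and $\epsilon>0$, complete regularity provides $\delta>0$ so that for $d_B(b,b')<\delta$ there is a homeomorphism $\theta\colon p^{-1}(b)\to p^{-1}(b')$ with $d_E(y,\theta(y))<\epsilon$ for all $y$; then $\theta\circ g_0\in\Phi(b')$ lies within $\epsilon$ of $g_0$ in the sup-metric, which is precisely lower semicontinuity of $\Phi$ at $b$. Next I would feed $\Phi$ into Michael's selection theorem~\cite{M}. Here the finite-dimensionality of $B$ enters: writing $n=\dim B$, the finite-dimensional version of Michael's theorem yields a continuous selection provided the values $\Phi(b)$ are $(n-1)$-connected and the family $\{\Phi(b)\}$ is equi-$LC^{\,n-1}$. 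Each value $\Phi(b)$ is homeomorphic to $Homeo(F_0)\cong Homeo(F)$ (fix one $g\in\Phi(b)$ and use $\sigma\mapsto g\circ\sigma$), which is \emph{locally} contractible by hypothesis, and this is what is meant to supply the required connectivity input.

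The main obstacle is precisely the passage from \emph{local} contractibility of $Homeo(F)$ to the \emph{global} connectivity hypotheses of Michael's theorem: $Homeo(F)$ need not be connected, let alone $(n-1)$-connected, so one cannot apply the global selection theorem to $\Phi$ directly. The plan is to localize using complete regularity once more. Shrinking $U$ and choosing $\epsilon$ small, I would restrict $\Phi$ to the subcarrier $\Phi_\epsilon(b)$ of those homeomorphisms whose images move points of $F_0$ by less than $\epsilon$; by local contractibility of $Homeo(F)$ these small sets are contractible, hence $(n-1)$-connected and equi-$LC^{\,n-1}$, while lower semicontinuity of $\Phi_\epsilon$ persists for the same reason as above. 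Equivalently, one builds the selection by induction over the skeleta of a sufficiently fine triangulation of the finite-dimensional base $B$, extending the partially defined family of homeomorphisms across each $k$-cell $D^k$ ($k\le n$) by contracting the boundary data $S^{k-1}\to Homeo(F)$ inside a small contractible neighborhood. Carrying out this step-by-step extension, which is the engine behind Michael's finite-dimensional selection theorem, produces the continuous selection $b\mapsto g_b$ and with it the desired local trivialization.
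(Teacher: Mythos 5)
Your overall strategy is the right one, and it is in fact the strategy behind the theorem as the paper presents it: the paper gives no proof of Theorem~\ref{DH} at all, but cites Dyer--Hamstrom \cite{DH} (with the correction \cite{Ha} and the detailed account in \cite{RS}), whose argument is precisely a Michael-selection argument for the carrier $\Phi$ of fiberwise homeomorphisms. Your reduction of local triviality to a continuous selection over a neighborhood of $b_0$, and your verification of lower semicontinuity of $\Phi$ from complete regularity, are both correct and match that proof.

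However, your resolution of what you rightly identify as the main obstacle has a genuine gap. You claim that the subcarrier $\Phi_\epsilon(b)$ of homeomorphisms of displacement $<\epsilon$ is contractible ``by local contractibility of $Homeo(F)$.'' That does not follow: local contractibility provides, for every neighborhood $V$ of a given point of $Homeo(F)$, \emph{some} smaller neighborhood $W$ that contracts inside $V$; it does not make displacement balls contractible for any $\epsilon$, and it gives no control keeping such a contraction inside $\Phi_\epsilon(b)$. The same defect undermines the asserted equi-$LC^{n-1}$ property of the shrunken carrier (whose values are moreover open rather than closed, so Michael's theorem could not be applied to it as stated). The correct repair --- and what the cited proof actually does --- is to keep the full carrier $\Phi$ and invoke the \emph{relative} form of Michael's finite-dimensional selection theorem \cite{M}: if $\dim B\le n+1$, $\Phi$ is LSC with closed values, and the family $\{\Phi(b)\}$ is equi-$LC^{n}$, then any selection given on a closed subset $A\subset B$ extends to a selection over a neighborhood of $A$. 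Taking $A=\{b_0\}$ with the selection $\mathrm{id}\colon F_0\hookrightarrow E$ yields the local trivialization with no connectedness hypothesis, local or global, on the values. What local contractibility of $Homeo(F)$ does give --- in combination with complete regularity, which lets you translate small spheres of homeomorphisms between nearby fibers --- is exactly this equi-$LC^{n}$ property, uniformly over $b$; this uniformity is also what your ``skeletal induction'' variant silently needs, since the boundary data over a $k$-cell lies in varying fibers and must be contracted \emph{within the carrier}, not merely within $Homeo(F)$. A second, fixable, technical point: $\Phi(b)$ is not closed in $C(F_0,E)$ with the sup metric, since a uniform limit of homeomorphisms need not be injective; one must work in the space of homeomorphisms metrized by $\sup d(f,g)+\sup d(f^{-1},g^{-1})$, which is complete. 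Getting these uniformities right is precisely the delicate point on which the original argument of \cite{DH} contained the error later corrected in \cite{Ha}.
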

The mistake in the proof presented in~\cite{DH} was corrected in~\cite{Ha}. A detailed proof can be found in~\cite{RS}.
Similar or stronger related results later were proven  in 
 \cite{K}, \cite{Se}, \cite{CF}, \cite{F2}. 

The Ferry's  $\alpha$-approximation theorem (Theorem 1 from~\cite{F3}) admits the following variation:
\begin{thm}\label{alf}
Let $M$ be a closed $n$-manifold, $n\ge 5$, with a fixed metric. Then for any $\epsilon>0$ there is $\delta>0$ such that
for every $\delta$-map $g:M\to N$ onto a closed $n$-manifold $N$ there is a homeomorphism $h:N\to M$
such that the composition $h\circ g$ is $\epsilon$-close to the identity $1_M$.
\end{thm}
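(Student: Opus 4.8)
The plan is to reduce the statement to Ferry's $\alpha$-approximation theorem (Theorem~1 of~\cite{F3}) by reversing the direction of the small map. Ferry's theorem manufactures a homeomorphism out of an $\epsilon$-map whose \emph{target} is the fixed manifold $M$, whereas here the $\delta$-map $g$ runs \emph{out} of $M$. The key idea is therefore to replace $g$ by an approximate inverse $g'\colon N\to M$, a map into the fixed manifold to which Ferry's theorem applies directly, and then to correct $g'$ to an honest homeomorphism.

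First I would build $g'$. Since $g$ is a $\delta$-map, the assignment $y\mapsto g^{-1}(y)$ is an upper semicontinuous multivalued map $N\to M$ with nonempty compact values of diameter less than $\delta$. Because $M$ is a compact ANR, such a map admits a single-valued continuous approximate selection $g'\colon N\to M$ whose graph lies within a prescribed small distance of the graph of $y\mapsto g^{-1}(y)$; this is the standard approximation of small upper semicontinuous maps into an ANR, of the same flavor as the multivalued-to-single-valued passage used earlier in the paper. Using the uniform continuity of $g$ and tracking the estimates, one checks that for $\delta$ small both compositions are close to the identities: there is $\eta=\eta(\delta)\to0$ with $d_M(g'(g(x)),x)<\eta$ for all $x\in M$ and $d_N(g(g'(y)),y)<\eta$ for all $y\in N$.

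Next I would verify that $g'$ meets the hypotheses of Ferry's theorem. The estimate $d_N(g(g'(y)),y)<\eta$ makes $g'$ almost injective: if $g'(y_1)=g'(y_2)$, then $y_1,y_2$ both lie within $\eta$ of $g(g'(y_i))$, so every point-preimage of $g'$ has diameter at most $2\eta$; hence $g'$ is an $\epsilon'$-map with $\epsilon'=\epsilon'(\delta)\to0$. Since the two small compositions are homotopic to the identities through small homotopies, $g'$ is a homotopy inverse of $g$, hence of degree $\pm1$ (with $\Z/2$ coefficients if $M$ is nonorientable) and therefore onto. Thus $g'\colon N\to M$ is a genuine $\epsilon'$-map onto the fixed closed $n$-manifold $M$ with $n\ge5$.

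Finally I would invoke Ferry's theorem. Given $\epsilon>0$, apply Theorem~1 of~\cite{F3} with target control $\epsilon/2$ to obtain $\delta_1>0$, and then choose $\delta$ (and the selection parameter) so small that $g'$ is a $\delta_1$-map and $\eta(\delta)<\epsilon/2$. Ferry's theorem then yields a homeomorphism $h\colon N\to M$ with $d_M(h(y),g'(y))<\epsilon/2$ for all $y$, whence
\[ d_M\big(h(g(x)),x\big)\le d_M\big(h(g(x)),g'(g(x))\big)+d_M\big(g'(g(x)),x\big)<\tfrac{\epsilon}{2}+\tfrac{\epsilon}{2}=\epsilon \]
for every $x\in M$, as required. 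The genuine content beyond citing Ferry's theorem is the reversal step, and the main obstacle I anticipate is purely quantitative: arranging the selection parameter and $\delta$ so that $g'$ is simultaneously a fine enough $\epsilon'$-map to feed into Ferry and a fine enough approximate inverse to keep $h\circ g$ within $\epsilon$ of $1_M$. Given the hypothesis $n\ge5$ that makes Ferry's theorem available, this bookkeeping should be routine.
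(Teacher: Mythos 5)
Your reduction runs Ferry's theorem in the wrong direction, and the statement you actually invoke is false. In Theorem~1 of~\cite{F3} the fixed metric lives on the \emph{source} of the small map: the hypothesis ``point-preimages have diameter $<\delta$'' is only meaningful in a metric that is fixed before the map is quantified, so it is $\delta$-maps \emph{out of} the fixed manifold $M$ onto an arbitrary closed manifold that Ferry controls --- exactly the situation of Theorem~\ref{alf}, whose novelty is only the form of the conclusion (a homeomorphism $N\to M$ with $h\circ g$ close to $1_M$). The version you use instead --- a $\delta_1=\delta_1(M,\epsilon)$ such that every $\delta_1$-map $g'\colon N\to M$ \emph{into} the fixed manifold, from an arbitrary closed $n$-manifold $N$ carrying some metric, is $\epsilon$-close in $d_M$ to a homeomorphism --- cannot be a theorem: since no metric on $N$ is part of the data, one may rescale it. Concretely, let $N=M=S^n$ and let $g'$ be a degree-two map; after multiplying the metric of $N$ by a small constant, $g'$ becomes a $\delta_1$-map for any prescribed $\delta_1>0$, yet it is not homotopic, hence for small $\epsilon$ not $\epsilon$-close, to any homeomorphism. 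The only true approximation theorem with control in the target is the Chapman--Ferry $\alpha$-approximation theorem~\cite{CF2}, and its hypothesis is not smallness of point-preimages but that the map is a $\delta$-\emph{equivalence}, i.e.\ a homotopy equivalence both of whose homotopies are controlled over $M$.

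This is also where your quantitative claims break down, not merely as bookkeeping. The estimate $d_N(g(g'(y)),y)<\eta(\delta)$ with $\eta(\delta)\to 0$ is unjustified: it presupposes a choice of metric $d_N$, and the bound passes through the modulus of continuity of $g$ with respect to that metric, which is not controlled by $\delta$; worse, the approximate inverse $g'$ has an intrinsic error floor of order $\delta$ in $M$ (since $g$ identifies points up to distance $\delta$), so even for a fixed $g$ one only gets $\eta\approx\omega_g(c\delta)$, which may be comparable to $\mathrm{diam}\,N$. Hence neither ``$g'$ is an $\epsilon'$-map with $\epsilon'(\delta)\to 0$'' nor the controlled homotopy $g\circ g'\simeq 1_N$ that Chapman--Ferry would require (with tracks measured over $M$ after composing with $g'$) is established; upgrading ``small point-preimages'' to a two-sided controlled homotopy equivalence over the fixed manifold is precisely the hard content of Theorems~2, 3 and~4 of~\cite{F3}, which is what the paper's proof (a proof by reference) reruns. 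Note finally that no reversal is needed: $g$ itself is a $\delta$-map out of the fixed manifold, exactly Ferry's hypothesis; the route closest to a formal reduction is to apply Theorem~1 of~\cite{F3} to $g$, obtain a homeomorphism $h_1\colon M\to N$ $\epsilon$-homotopic to $g$, and set $h=h_1^{-1}$, whereupon $d_M(h(g(x)),x)<\epsilon+\delta$ follows from the $\delta$-map property provided the $\epsilon$-homotopy control in~\cite{F3} is the one pulled back to $M$ --- a point the paper sidesteps by asserting the variation is proved by repeating Ferry's argument.
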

Like the proof of the original Ferry's theorem, the proof of this variation is a consecutive 
applications of Theorems 2, 3, and 4 of ~\cite{F3}. Also we note that this theorem holds true for compact 
$Q$-manifolds.

We recall that a map $f:X\to Y$ of a subset $X$ of a metric space $(Y,d)$ is called an {\em $\epsilon$-move} if
$d(x,f(x))<\epsilon$ for all $x\in X$.
\begin{prop}\label{claim}
Let $M$ be a close manifold with a fixed metric. Then given $\epsilon_0>0$, there is $\delta_0>0$ such that
for every isometric embedding $M\subset X$ for any $n$-dimensional compact $Z\subset N_{\delta_0}(M)$
of the $\delta_0$-neighborhood of $M$ there is continuous $\epsilon_0$-move $r:Z\to M$.
\end{prop}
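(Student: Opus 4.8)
The plan is to build $r$ by factoring $Z$ through the nerve of a fine open cover and then mapping that nerve into $M$, using that a closed manifold is a compact ANR and hence is uniformly locally contractible. Here $n=\dim M$, and $Z$ is $n$-dimensional by hypothesis. The crucial observation is that the ambient space $X$ enters only through the inequality $d_X(z,M)<\delta_0$ for $z\in Z$; since $M\subset X$ isometrically, all distances between points of $M$ are the intrinsic ones, so every constant produced below will depend on $M$, $n$, and $\epsilon_0$ alone, never on the embedding.

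First I would record the ANR input, which is where the finiteness of $n$ is essential. Since $M$ is a closed manifold it is a compact ANR and is $LC^{n-1}$ with uniform constants, so by the full-realization property of compact ANRs (as in Hu's theory of retracts) there is $\eta>0$ such that any map defined on the vertices of a simplicial complex $K$ with $\dim K\le n$, sending the vertices of each simplex to a set of diameter $<\eta$ in $M$, extends to a map $g\colon K\to M$ carrying each simplex to a set of diameter $<\epsilon_0/3$. I then fix $\delta_0>0$ and a mesh bound $\gamma>0$ with $2\delta_0+2\gamma<\eta$ and $\delta_0+\gamma<\epsilon_0/3$.

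Next, given an isometric embedding $M\subset X$ and an $n$-dimensional compactum $Z\subset N_{\delta_0}(M)$, I would use $\dim Z\le n$ to choose a finite open cover $\mathcal U$ of $Z$ of mesh $<\gamma$ and order $\le n+1$, so its nerve $K$ has $\dim K\le n$. A subordinate partition of unity yields the canonical map $\kappa\colon Z\to K$, with $\kappa(z)$ lying in the simplex spanned by those $U\in\mathcal U$ that contain $z$. For each $U\in\mathcal U$ pick $z_U\in U$ and then, using $z_U\in N_{\delta_0}(M)$, a point $m_U\in M$ with $d(z_U,m_U)<\delta_0$; this defines the vertex map $K^{(0)}\to M$. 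If $U_0,\dots,U_k$ span a simplex of $K$, they share a common point $w$, whence $d(m_{U_i},m_{U_j})\le d(m_{U_i},z_{U_i})+d(z_{U_i},w)+d(w,z_{U_j})+d(z_{U_j},m_{U_j})<2\delta_0+2\gamma<\eta$. Thus the hypothesis of the full-realization property holds, I obtain $g\colon K\to M$ as above, and I set $r=g\circ\kappa$.

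Finally, the estimate: for $z\in Z$ choose $U_0\ni z$ contributing to $\kappa(z)$; then $r(z)=g(\kappa(z))$ lies in $g(\sigma)$, a set of diameter $<\epsilon_0/3$ containing $m_{U_0}$, while $d(z,m_{U_0})\le d(z,z_{U_0})+d(z_{U_0},m_{U_0})<\gamma+\delta_0<\epsilon_0/3$. Hence $d(z,r(z))\le d(z,m_{U_0})+d(m_{U_0},r(z))<2\epsilon_0/3<\epsilon_0$, and $r$ is continuous as a composite of continuous maps, so it is the desired $\epsilon_0$-move. I expect the main obstacle to be arranging the full-realization constant $\eta$ to be uniform over all embeddings $X$; this is precisely where I rely on $\eta$ being an intrinsic ANR invariant of $M$ together with the dimension bound $\dim Z\le n$, which caps $\dim K$ at $n$ and lets a single $\eta$ (for the fixed target mesh $\epsilon_0/3$ and fixed nerve dimension $n$) serve for every such $Z$ and every ambient $X$.
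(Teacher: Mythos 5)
Your proof is correct, and it rests on the same key lemma as the paper's own argument: the Lefschetz full-realization criterion for the compact ANR $M$, fed with a vertex map obtained by pushing points near $M$ into $M$ at distance $<\delta_0$, followed by a triangle-inequality estimate. The difference is in how $Z$ is combinatorialized. The paper triangulates $Z$ itself with small mesh and sends its vertices to nearest points of $M$ --- an argument that only makes sense when $Z$ is a polyhedron, and the paper explicitly restricts to that case, disposing of general compact $Z$ with a one-line remark that one should pass to nerves of small open covers. You carry out precisely that nerve argument: a finite cover of mesh $<\gamma$ and order $\le n+1$ (this is where $\dim Z\le n$ enters), the canonical map $\kappa\colon Z\to K$ into the nerve, the vertex assignment $U\mapsto m_U$, the full realization $g$, and $r=g\circ\kappa$, with correct bookkeeping of constants ($2\delta_0+2\gamma<\eta$ for the realization hypothesis, $\delta_0+\gamma<\epsilon_0/3$ and a final bound of $2\epsilon_0/3$ for the displacement). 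What your version buys is a proof of the statement as actually stated, for arbitrary $n$-dimensional compacta; this matters even for the paper's own application in Theorem~\ref{iso-reg}, where $Z=\phi^{-1}(y')$ is a closed topological $n$-manifold with $n\ge 5$, which need not admit any triangulation, so the polyhedral case alone does not literally suffice there. Your observation that the isometric embedding $M\subset X$ makes $\eta$ an intrinsic constant of $M$, so that a single $\delta_0$ works for every ambient $X$, is exactly the right justification for the uniformity the proposition claims.
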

\begin{proof}
By the Lefschetz criterion of ANRs (see~\cite{Bo}, Theorem 8.1), for any $\epsilon>0$
there is $\delta>0$ such that for every
map of the vertices $f:K^{(0)}\to M$ of a $n$-dimensional simplicial complex $K$ with the condition that $d(f(v),(v'))<\delta$ for every edge $[v,v']\subset K$ there is  an extension $\bar f:K\to M$ with $diam\bar f(\Delta)<\epsilon$ for each simplex $\Delta\subset K$. 

We prove the Proposition when $Z$ is a polyhedron. For the general case (not needed in the paper)
can be obtained usin approximation of $Z$ by nerves of small open covers. 

We take 
$\epsilon<\epsilon_0/3$ to obtain $\delta<\epsilon$ from the Lefschetz criterion. Take $\delta_0<\delta/4$ and consider a triangulation of $Z$ with the mesh $\delta'$ satisfying $\delta'<\delta-2\delta_0$.
We may assume that $\delta'<\epsilon_0-\epsilon-\delta_0$. We define $r$ on the vertices of $Z$ by
sending each vertex $v$ to a nearest point of $M$. Thus, $d(v,r(v))<\delta_0$.
Then for any edge  $[v,v']$ we obtain $d(r(v),r(v'))<2\delta_0+\delta'<\delta$. Let $r:Z\to M$ be an extension from the Lefschetz criterion. Then for every $z\in Z$ we consider a simplex $\Delta\subset Z$ that contains $z$ and fix a vertex $v\in \Delta$. By the triangle inequality
we obtain $d(z,r(z))\le d(z,v)+d(v,r(v))+d(r(v),r(z))<\delta'+\delta_0+\epsilon<\epsilon_0$.
\end{proof}

\begin{thm}\label{iso-reg}
Let $\phi:X\to Y$ be a continuous map between compact metric spaces such that all point preimages
$\phi^{-1}(y)$ are isometric to a closed $n$-manifold $M$, $n\ge 5$. Then $\phi$ is a completely regular map.
\end{thm}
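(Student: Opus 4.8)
The plan is to show that nearby fibers of $\phi$ are homeomorphic via a small homeomorphism, using the two ingredients just established: the $\epsilon$-move retraction of Proposition~\ref{claim} and the $\alpha$-approximation theorem of Theorem~\ref{alf}. The strategy is to fix a base point $y_0 \in Y$, identify its fiber $\phi^{-1}(y_0)$ isometrically with $M$, and then produce, for nearby $y'$, a homeomorphism $\phi^{-1}(y_0) \to \phi^{-1}(y')$ that moves every point a controlled amount. Since every fiber is isometric to $M$, I can freely transport the problem back and forth to a single fixed copy of $M$ with its fixed metric.

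First I would unwind the definition of complete regularity: given $y_0 \in Y$ and $\epsilon > 0$, I must find $\delta$ so that whenever $d_Y(y_0, y') < \delta$, the fibers $\phi^{-1}(y_0)$ and $\phi^{-1}(y')$ admit an $\epsilon$-close homeomorphism. The key observation is that by continuity of $\phi$ together with compactness, fibers over nearby points of $Y$ lie in a small neighborhood of one another inside $X$. More precisely, for any $\delta_0 > 0$ there is $\delta > 0$ such that $d_Y(y_0, y') < \delta$ forces $\phi^{-1}(y') \subset N_{\delta_0}(\phi^{-1}(y_0))$ and symmetrically $\phi^{-1}(y_0) \subset N_{\delta_0}(\phi^{-1}(y'))$; this is a standard uniform-continuity argument, since otherwise one extracts a sequence of fiber points staying a fixed distance from the other fiber yet mapping to $y_0$ in the limit, contradicting that $\phi^{-1}(y_0)$ is the full preimage.

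Next I would feed this into the two approximation results. Since $\phi^{-1}(y_0)$ is isometric to the closed $n$-manifold $M$ and $\phi^{-1}(y')$ is an $n$-dimensional compact set sitting in the $\delta_0$-neighborhood of it, Proposition~\ref{claim} produces a continuous $\epsilon_0$-move $r \colon \phi^{-1}(y') \to \phi^{-1}(y_0)$ for $\delta_0$ chosen small relative to a target $\epsilon_0$. Symmetrically (reversing the roles of $y_0$ and $y'$), I obtain a small move $s \colon \phi^{-1}(y_0) \to \phi^{-1}(y')$ in the other direction. Transporting through the isometries with $M$, the map $r$ becomes a $\delta$-map onto the closed manifold $\phi^{-1}(y') \cong M$ once $\epsilon_0$ is taken smaller than the $\delta$ threshold of Theorem~\ref{alf}; that theorem then yields a genuine homeomorphism $h \colon \phi^{-1}(y') \to \phi^{-1}(y_0)$ whose composition with $r$ is $\epsilon$-close to the identity. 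Since $r$ is itself an $\epsilon_0$-move, the homeomorphism $h$ (or its inverse) is $\epsilon$-close to the inclusion of one fiber into the other, which is exactly the estimate required for complete regularity.

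The main obstacle will be the bookkeeping of the two metrics and the isometric identifications: Theorem~\ref{alf} is stated for self-maps relative to the fixed metric on $M$, whereas here the maps go between two distinct embedded copies of $M$ sitting in $X$, so I must carefully verify that transporting $r$ through the fiber-isometries turns an $\epsilon_0$-move in $X$ into a bona fide $\delta$-map onto $M$ in the intrinsic metric, and that the resulting homeomorphism, pushed back into $X$, is still $\epsilon$-close to the identity in the ambient metric $d_X$. The hypothesis $n \ge 5$ enters solely through Theorem~\ref{alf}, and I would remark that the same argument applies verbatim to compact $Q$-manifolds since both Proposition~\ref{claim} and Theorem~\ref{alf} hold in that setting.
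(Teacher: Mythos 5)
You follow the paper's route---Proposition~\ref{claim} produces a small move $r$ from a nearby fiber to $\phi^{-1}(y_0)$, and Theorem~\ref{alf} upgrades it to a homeomorphism---but there is a genuine gap at the point where you invoke Theorem~\ref{alf}. That theorem applies only to $\delta$-maps $g\colon M\to N$ that are \emph{onto} the closed manifold $N$, and you simply assert that $r$, transported by the fiber isometry, is ``a $\delta$-map onto'' the other fiber; nothing in Proposition~\ref{claim} guarantees that the retraction it produces is surjective. Surjectivity is not a cosmetic hypothesis here: it is needed once to apply Theorem~\ref{alf} at all, and then a second time in your closing estimate, because the conclusion of Theorem~\ref{alf} bounds $d_M\bigl(h(r(i(z))),z\bigr)$ only for $z\in M$, so to bound the displacement of an \emph{arbitrary} point $x$ of $\phi^{-1}(y_0)$ under the candidate homeomorphism you must be able to write $x=r(i(z))$. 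Without surjectivity both steps collapse. The paper closes this gap using exactly the ingredient you construct and then never use: the map back $s$ (the paper's $r'$). Since $s\circ r$ is a self-map of the closed manifold $\phi^{-1}(y')$ with arbitrarily small displacement in its intrinsic metric, it is homotopic to the identity (closed manifolds are compact ANRs), hence has nonzero degree (use $\Z_2$-degree if $M$ is non-orientable); multiplicativity of degree forces $\deg r\neq 0$, and a non-surjective map between closed $n$-manifolds has degree zero, so $r$ is onto.

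A secondary point: your ``standard uniform-continuity argument'' proves only the inclusion $\phi^{-1}(y')\subset N_{\delta_0}(\phi^{-1}(y_0))$, i.e.\ upper semicontinuity of fibers. The symmetric inclusion $\phi^{-1}(y_0)\subset N_{\delta_0}(\phi^{-1}(y'))$, which is what you need to build $s$---and which the surjectivity argument above makes indispensable---does not follow from continuity of $\phi$ alone; for maps whose fibers are not all isometric it can fail outright. It does hold in the present setting, but for a subtler reason: a Hausdorff limit of fibers is isometric to $M$ and is contained in the limit fiber, and an isometric embedding of a compact metric space into itself is necessarily surjective, so the limit is the whole fiber. (The paper is equally terse on this point, so this is a gap you share with it rather than one you introduced; the missing surjectivity argument, by contrast, is a step the paper supplies and your proposal lacks.)
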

\begin{proof}
Let $y\in Y$ and $\epsilon>0$ be given.
Proposition~\ref{claim} implies that there is a neighborhood $U(y)$ of $ y\in Y$ such that for every $y'\in U(y)$
there is a $\delta/2$-move $r:\phi^{-1}(y')\to\phi^{-1}(y)$ where $\delta$ is taken for $\epsilon/2$
as in Theorem~\ref{alf}. Let $r':\phi^{-1}(y)\to\phi^{-1}(y')$ be a similar map back. We may assume that $r'\circ r$
is homotopic to the identity. Therefore,
we may assume that $r$ is surjective. Let $i:M\to \phi^{-1}(y')$ be an isometry.
Note that $r\circ i$ is a $\delta$-map. Hence there is a homeomorphism $h:\phi^{-1}(y)\to M$ with
$d_M(hri(z),z)<\epsilon/2$. Note that the homeomorphism $i\circ h$ is an $\epsilon$-move: $d_X(ih(x),x)=d_X(ihri(z),ri(z))\le
d_X(ihri(z),i(z))+d_X(i(z), ri(z))=d_M(hri(z),z)+d_X(i(z), ri(z))<\epsilon/2+\delta/2<\epsilon$.
Here $z\in M$ is such that $ri(z)=x$. Such $z$ exits in view of surjectivity of $r$.
\end{proof}
We note that Theorem~\ref{iso-reg} holds true for $Q$-manifolds as well.

\

\begin{question}[The Hurewicz Fibration Problem]\label{Hurewicz}
Is every completely regular map with a manifold   fiber  a Hurewicz fibration?
\end{question}
In view of Theorem~\ref{DH}, it is an open problem when the base is infinite dimensional. 
It is known that a completely regular map is a Serre fibration. We refer to~\cite{DS} for further discussion of the Fibration Problem.

\subsection{Completely regular maps in the Borel construction}

Suppose that a compact group $G$ acts freely on a metric space $E$ with the orbit space $B$. Suppose that it also acts on
a compact space $F$. We may assume that $G$ acts by isometries.  
\begin{prop}\label{Borel cr}
The projection $p_F:F\times_GE\to B$ in the Borel construction is completely regular.
\end{prop}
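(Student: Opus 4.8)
The plan is to reduce complete regularity to a one-line estimate in a conveniently chosen quotient metric. Since the excerpt records that complete regularity does not depend on the choice of metrics on $E$ and $B$, I would first fix a $G$-invariant metric $d_E$ on $E$ (available because $G$ is compact and, as assumed, acts by isometries) together with a $G$-invariant metric $d_F$ on $F$, equip $F\times E$ with the $\ell_1$ product metric, and give $B=E/G$ and $F\times_G E=(F\times E)/G$ the associated quotient metrics. For a compact group acting by isometries these quotient pseudometrics are honest metrics and are given by $d_B(Ge,Ge')=\min_{g\in G}d_E(e,ge')$ and $\rho\big(G(f_1,e_1),G(f_2,e_2)\big)=\min_{g\in G}\big(d_F(f_1,gf_2)+d_E(e_1,ge_2)\big)$, the minima being attained by compactness of $G$.

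Second, I would identify the fibers of $p_F$. The fiber over $b=Ge$ is $\{G(f,e):f\in F\}$, and the map $f\mapsto G(f,e)$ is a continuous bijection from $F$ onto this fiber: injectivity uses that the $G$-action on $E$ is free, so $ge=e$ forces $g=\id$ and hence $f_1=f_2$, while surjectivity is immediate. As a continuous bijection from the compact space $F$ to a Hausdorff space it is a homeomorphism, so every fiber is canonically homeomorphic to $F$ once a representative $e$ of $b$ has been chosen.

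Third, given $b=Ge$ and $\epsilon>0$, I would simply take $\delta=\epsilon$. If $d_B(b,b')<\delta$, the formula for $d_B$ provides a representative $e''$ of $b'$ with $d_E(e,e'')<\delta$. I then define $h:p_F^{-1}(b)\to p_F^{-1}(b')$ by $h(G(f,e))=G(f,e'')$; by the previous step it is a homeomorphism, being the composite of the two fiber identifications with $F$. Finally, evaluating the quotient metric at the group element $\id$ gives $\rho\big(G(f,e),G(f,e'')\big)\le d_F(f,f)+d_E(e,e'')=d_E(e,e'')<\epsilon$ for every $f$, so $h$ is an $\epsilon$-move and $p_F$ is completely regular (with $\delta$ even uniform in $b$).

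The computation itself is immediate, so the only genuine content is the setup, and the step I would treat most carefully is the passage to quotient metrics. One must check that the quotient pseudometric on $(F\times E)/G$ is actually a metric and is given by the stated minimum formula — this is exactly where compactness of $G$ and the isometric action enter (orbits are compact, hence closed, and the infima are attained) — and that invoking metric-independence of complete regularity legitimately permits replacing arbitrary metrics by these invariant ones. Once that is in place, the key geometric observation, namely that sliding the $E$-coordinate of a point within its orbit by a small amount moves the point in $F\times_G E$ by no more than that amount, makes the $\epsilon$-move estimate fall out with $\delta=\epsilon$.
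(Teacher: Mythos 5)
Your proof is correct and is essentially the paper's own argument: in both cases the fiber homeomorphism is $G(f,e)\mapsto G(f,e'')$, obtained by sliding the $E$-coordinate to a nearby representative of $b'$ (the paper produces that representative via openness of the orbit map $q_E$ and a lifted sequence $y_k\to y$, you via the attained minimum in the quotient metric), with freeness of the action on $E$ giving the identification of each fiber with $F$. The only difference is bookkeeping: your invariant-metric/quotient-metric setup turns the paper's assertion that these homeomorphisms converge to the identity into the explicit estimate $\rho\bigl(G(f,e),G(f,e'')\bigr)\le d_E(e,e'')$, which even yields $\delta=\epsilon$ uniformly in $b$.
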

\begin{proof} Fix $y\in q_E^{-1}(x)$. Since  $q_E$ is open, any sequence $x_n$ converging to $x$ in $B$ admits a lift $y_n$ converging to $y$ in $E$
with respect to the orbit map $q_E:E\to B$. Then 
$$(q_{F\times E})(1_F\times c_i)((q_{F\times E})|_{F\times\{y\}})^{-1}:p_F^{-1}(x)\to p_F^{-1}(x_k)$$
 is the sequence of homeomorphisms that converges to the identity $id:p_F^{-1}(x)\to p_F^{-1}(x)$
where $c_k:y\to y_k$ is the map of one-point spaces and $q_{F\times E}:F\times E\to F\times_GE$ is the orbit map of the diagonal action.
\end{proof}

 We use the notation $Homeo(M)$ for the group of homeomorphisms of a manifold $M$ with the compact-open topology.
By $Homeo_0(M)$ we denote the subgroup of homeomorphisms isotopic to the identity. For manifolds with boundary we
use the notation $Homeo(M,\partial M)$ for the group of the homeomorphisms of $M$ which are the identity on $\partial M$.

\begin{question}\label{contraction}
Let $M$ be a manifold and $G\subset Homeo_0(M)$ be a compact  subgroup that admits a deformation $H:G\times[0,1]\to Homeo(M)$  to the unit 1.
Does  there exist  such a deformation $H$ through homomorphisms, i.e., such that $h_t=H(-,t):G\to Homeo(M)$ is a group homomorphism  for every $t$ ?
\end{question}

\begin{thm}
Suppose that Questions~\ref{Hurewicz} and~\ref{contraction} have positive answers. Then the Injectivity Conjecture holds true.
\end{thm}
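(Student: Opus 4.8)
The plan is to deduce the conclusion $f^*\pi^*(\alpha)\neq 0$ from the single statement that, for $\epsilon$ small, the fiberwise $\epsilon$-map $f\colon E\to M\times B$ is a homotopy equivalence. Indeed, the projection $\pi\colon M\times B\to B$ admits a section $s(x)=(m_0,x)$, so $\pi^*$ is split injective and $\pi^*(\alpha)\neq 0$; once $f^*$ is known to be an isomorphism on $h^*$, the desired inequality follows at once. Fiberwise, the restriction $f_x\colon p^{-1}(x)\to M$ is an $\epsilon$-map between closed $n$-manifolds (the source is isometric to $M$), so for $\epsilon$ small Theorem~\ref{alf} makes each $f_x$ a homotopy equivalence that is $\epsilon$-close to a homeomorphism. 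Thus $f$ is a fiberwise homotopy equivalence; the whole difficulty is that $B$ is infinite-dimensional, so Dold's theorem does not apply (the Hurewicz fibration $p$ need not be numerable) and one cannot simply conclude that $f$ is a homotopy equivalence of total spaces.

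To get around this I would interpolate between $E$ and $M\times B$ by a completely regular map with closed-manifold fibers and then invoke Question~\ref{Hurewicz}. First note that, by Theorem~\ref{iso-reg}, $p$ is itself completely regular. The existence of the fiberwise $\epsilon$-map $f$ moreover $\epsilon$-trivializes $p$: the homeomorphisms between nearby fibers supplied by complete regularity, compared against $f$ and the fiberwise isometries $i_x\colon M\to p^{-1}(x)$, all lie within $\epsilon$ of the identity, so the holonomy of $p$ is confined to a small neighborhood of $1$ in $Homeo(M)$ and generates a compact subgroup $G\subset Homeo_0(M)$ that is deformable to the unit. This is exactly the input to Question~\ref{contraction}: a positive answer provides a deformation of $G\hookrightarrow Homeo(M)$ to the trivial homomorphism through homomorphisms $\rho_t\colon G\to Homeo(M)$, i.e. through genuine actions. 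Forming the associated family of Borel constructions over $B\times[0,1]$ yields a space $E'\to B\times[0,1]$ whose fibers are all homeomorphic to $M$, with $E'|_{B\times 0}=E$ and $E'|_{B\times 1}=M\times B$, arranged (by starting the deformation from the trivialization furnished by $f$) so that sliding from $t=0$ to $t=1$ realizes $f$. By Proposition~\ref{Borel cr} this interpolation is completely regular, and its fiber is the closed manifold $M$.

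Now I would apply Question~\ref{Hurewicz}: with a positive answer, $E'\to B\times[0,1]$ is a Hurewicz fibration. Since $B\times[0,1]$ deformation retracts onto $B\times 0$ and, separately, onto $B\times 1$, the total space $E'$ deformation retracts onto $E$ and onto $M\times B$, so both inclusions are homotopy equivalences. The retraction onto $M\times B$, restricted to $E$, is homotopic to $f$ by construction; being a composite of two homotopy equivalences it is itself one, so $f$ is a homotopy equivalence and $f^*$ is an isomorphism on $h^*$. Together with $\pi^*(\alpha)\neq 0$ this gives $f^*\pi^*(\alpha)\neq 0$, which is the Injectivity Conjecture.

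The hard part will be the middle step: extracting an honest compact subgroup $G\subset Homeo_0(M)$ from the merely pointwise, non-canonical control furnished by $f$ over an infinite-dimensional base, and realizing the Question~\ref{contraction} deformation as a genuinely completely regular interpolation with closed-manifold fibers. The naive interpolation by fiberwise mapping cylinders is useless here, since the fibers $M\cup_{f_x}(p^{-1}(x)\times[0,1])$ are not closed manifolds and Question~\ref{Hurewicz} cannot be applied to them; replacing the mapping cylinder by a deformation through homeomorphisms is precisely what Question~\ref{contraction} is meant to supply, and checking that the resulting family over $B\times[0,1]$ is completely regular (so that Question~\ref{Hurewicz} upgrades it to a Hurewicz fibration) is the technical heart of the argument. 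In effect the two questions together substitute for the parametrized $\alpha$-approximation theorem that is unavailable when the base $B$ is infinite-dimensional.
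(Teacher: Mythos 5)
Your overall architecture --- use Question~\ref{contraction} to deform the structure ``group'' to the identity through homomorphisms, build from this a completely regular interpolation over $B\times[0,1]$ between $p$ and the trivial fibration, upgrade it to a Hurewicz fibration via Question~\ref{Hurewicz}, and then compare the two ends --- is the same as the paper's. But there is a genuine gap at the foundation of your argument, the very step you flag as ``the hard part'': you must extract from the fiberwise $\epsilon$-map $f$ and the complete regularity of $p$ an honest compact subgroup $G\subset Homeo_0(M)$ together with a presentation of $E$ as a Borel construction associated to a principal $G$-bundle over $B$. No such structure exists for a general Hurewicz fibration with isometric fibers: the fiber-to-fiber homeomorphisms furnished by complete regularity are non-canonical, satisfy no cocycle condition, and the subgroup of $Homeo(M)$ they ``generate'' need not be compact, need not lie in $Homeo_0(M)$, and need not be deformable to the unit even if every generator has small displacement. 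Without the group and the principal bundle there is no ``associated family of Borel constructions,'' so your interpolation $E'$ is never actually constructed and Proposition~\ref{Borel cr} has nothing to apply to. The paper does not attempt to bridge this gap: its proof begins with ``Let $A_p$ act freely on a manifold $F$,'' i.e., it treats only fibrations that are a priori Borel constructions of an $A_p$-action, with the principal $A_p$-bundle over the rough classifying space $B$ given in advance (the only case needed for Corollary~\ref{C1} and the main theorem). It then applies Question~\ref{contraction} to $A_p$ itself, lets $h_t(A_p)$ act on $F\times\{t\}$ to get an $A_p$-action on $F\times[0,1]$, and verifies complete regularity of the resulting map over $B\times[0,1]$ by hand: complete regularity over each slice $B\times\{t\}$ (Theorem~\ref{iso-reg} or Proposition~\ref{Borel cr}) plus a separate uniform estimate in the $t$-direction, which is \emph{not} a consequence of Proposition~\ref{Borel cr} alone, contrary to what you assert.

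Your endgame is also both stronger than necessary and rests on a second unjustified step: you need the interpolation to be ``arranged so that sliding from $t=0$ to $t=1$ realizes $f$,'' so that the fiber homotopy equivalence between the two ends shows $f$ itself is a homotopy equivalence. The paper never relates the interpolation to $f$ at all. It only uses the homotopy lifting property of the Hurewicz fibration over $B\times[0,1]$ to extend the canonical identification of the fiber over $B\times\{1\}$ with $M\times B$ (the action at $t=1$ being trivial) to a fiberwise map over $B\times[0,1]$; restricting over $B\times\{0\}$ produces a fiberwise map $g\colon M\times B\to E$ with $p\circ g=\pi$. That alone finishes the proof formally: since $f$ and $g$ are both fiberwise, $\pi\circ f\circ g=\pi$, hence $g^*f^*\pi^*(\alpha)=\pi^*(\alpha)\ne 0$ because $\pi$ admits a section, and therefore $f^*\pi^*(\alpha)\ne 0$. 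No homotopy equivalence of total spaces, no Dold-type theorem, and no compatibility between the interpolation and $f$ is needed; dropping that requirement is exactly what makes the paper's version of the final step go through.
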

\begin{proof}
Let $A_p$ act freely on a manifold $F$ (or $Q$-manifold). Let $h_t$ be a deformation of $A_p$ to the identity in $Homeo(F)$
by virtue of a family of subgroups $h_t(A_p)\subset Homeo(M)$.
We define an $A_p$-action on $F\times[0,1]$ by letting the group $h_t(A_p)$ act on $F\times\{t\}$. 
Let $B$ be a rough classifying space for $A_p$ with the universal covering $q_E:E\to B$.

The projection
$p_{F\times[0,1]}$ in the Borel construction factors through the  map $$p:(F\times[0,1])\times_{A_p}E\to B\times[0,1]$$ with the fiber $F$. We show that $p$ is completely regular.  By taking an invariant metric on $F\times[0,1]$ we may assume that $p$ has isometric fibers over $B\times t$ for every $t$.
By Theorem~\ref{iso-reg} (or by Proposition~\ref{Borel cr}) we obtain that $p$ is completely regular over
each $B\times t$. Thus, it suffices to prove that $p$ is completely regular over $b \times[0,1]$ uniformly on $b\in B$ in the following sense: The number  $\delta((b,t),\epsilon)$ from the definition of complete regularity can be chosen to be independent of $b$. For $(b,t)$ and $(b,t')$ we define a homeomorphism $h_{t,t'}$ of the fibers by fixing $e\in E$ with
$q_E(e)=b$ and identifying $p^{-1}(b,t)$ by means of the inverse of the projection to the orbit space
$q:F\times I\times E\to (F\times I)\times_{A_p}E$ with $F\times t\times e$ then  translating it to $F\times t'\times e$ and projecting by $q$ to $p^{-1}(b,t')$ . This homeomorphism does not depend on choice of $e$. The translation of $F\times t\times e$ to $F\times t'\times e$ is an $\epsilon$-move where $\epsilon$ depends on $t$ and $|t-t'|$ only
with $\epsilon\to 0$ as $t'\to t$. Thus, $h_{t,t'}$ is an $\epsilon$-move for all $b\in B$.

If Question~\ref{Hurewicz} has an affirmative answer, then $p$ is a Hurewicz fibration. Then
the identification $F\times\{1\}\times B=p^{-1}(B\times\{1\})$ extends to a fiberwise map
$F\times[0,1]\times B\to(F\times[0,1])\times_{A_p}E$ over $B\times [0,1]$.
The restriction of this fiber wise map over $B\times\{0\}$ yields a splitting of the fiberwise  map in the Injectivity Conjecture.
\end{proof}

\begin{prop}\label{defo}
Suppose that $A_p\subset Homeo(D^n,\partial D^n)$. Then $A_p$ admits a deformation $h_t:A_p\to Homeo(D^n,\partial D^n)$  to the identity
such $h_t$ is a group homomorphism  for every $t$. 
\end{prop}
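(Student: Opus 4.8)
The plan is to use the Alexander trick and to observe that, because the coning uses a single radial scaling factor, it respects composition and therefore defines a group endomorphism for each value of the parameter. For $g\in Homeo(D^n,\partial D^n)$ and $t\in(0,1]$ define the coned homeomorphism
$$
g_t(x)=\begin{cases} t\,g(x/t), & |x|\le t,\\ x, & |x|\ge t,\end{cases}
$$
and set $g_0=\id$. First I would record the standard facts that each $g_t$ lies in $Homeo(D^n,\partial D^n)$, that $g_1=g$, and that the two formulas agree on the sphere $|x|=t$ because $g$ fixes the boundary (there $|x/t|=1$, so $g(x/t)=x/t$ and $t\,g(x/t)=x$). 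Note also that $g_t$ carries the ball $\{|x|\le t\}$ into itself, since $|g(x/t)|\le 1$ forces $|g_t(x)|\le t$.

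The key point is that for each fixed $t$ the assignment $\Phi_t\colon g\mapsto g_t$ is an endomorphism of the group $Homeo(D^n,\partial D^n)$. I would verify the composition formula $(g\circ h)_t=g_t\circ h_t$ by a direct computation: for $|x|\le t$ one has $h_t(x)=t\,h(x/t)$ with $|h_t(x)|\le t$, so $g_t(h_t(x))=t\,g(h(x/t))=t\,(g\circ h)(x/t)$, which is exactly $(g\circ h)_t(x)$; for $|x|\ge t$ both sides equal $x$. Hence $\Phi_t$ is a group homomorphism, and it is manifestly continuous in the compact–open (sup-metric) topology. Restricting to the subgroup, I set $h_t:=\Phi_t|_{A_p}\colon A_p\to Homeo(D^n,\partial D^n)$; the restriction of a homomorphism to a subgroup is again a homomorphism, so each $h_t$ is a continuous group homomorphism, with $h_1$ the given inclusion and $h_0$ the trivial homomorphism onto $\{\id\}$.

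It remains to produce the joint continuity needed for an honest deformation $H\colon A_p\times[0,1]\to Homeo(D^n,\partial D^n)$. For $t$ bounded away from $0$, joint continuity of $(t,g)\mapsto g_t$ is immediate, and the only delicate point is the collapse at $t=0$. There I would use the uniform estimate $|g_t(x)-x|\le |t\,g(x/t)|+|x|\le 2t$, valid for every $x$ and every $g$ because all images stay in $D^n$; consequently $g_t\to\id$ uniformly in $g$ as $t\to0$, which yields continuity of $H$ across $t=0$ and completes the construction.

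The only step requiring genuine care is the composition formula $(g\circ h)_t=g_t\circ h_t$, and the reason it holds is precisely that the radial scaling by $t$ commutes with composition once one knows that $g_t$ preserves the ball of radius $t$. This in turn rests on the boundary-fixing hypothesis $g\in Homeo(D^n,\partial D^n)$, which is exactly what makes the coned maps close up continuously across $|x|=t$ and what keeps the whole family of homomorphisms well defined; it is also why the statement is phrased for $Homeo(D^n,\partial D^n)$ rather than for $Homeo(D^n)$.
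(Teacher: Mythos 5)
Your proof is correct and is essentially the same as the paper's: both are the Alexander trick, with your explicit formula $g_t(x)=t\,g(x/t)$ on $\{|x|\le t\}$, identity outside, being exactly the paper's composition of the conjugation isomorphism $(L_t)_*\colon Homeo(D^n,\partial D^n)\to Homeo(tD^n,\partial tD^n)$ with the extension-by-identity embedding back into $Homeo(D^n,\partial D^n)$. The paper gets the homomorphism property for free as a composition of group homomorphisms, while you verify it by direct computation; this is a difference of exposition, not of method.
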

\begin{proof} This follows from the Alexander's trick. 
Let $tD^n$ denote the image of $D^n$ under multiplication by $t\le1$. Extending to $D^n\setminus tD^n$
by the identity the identity homeomorphism 
of the boundary $\partial tD^n$  defines an embedding $h_t:Homeo(tD^n,\partial tD^n)\to Homeo(D^n,\partial D^n)$ of topological groups with the image of $h_t$ converging to the unit as $t\to\infty$.
By precomposing this embedding with the given embedding $A_p\to Homeo(D^n,\partial D^n)$
and the isomorphism $$(L_t)_*:Homeo(D^n,\partial D^n)\to Homeo(tD^n,\partial tD^n)$$
where $L_t:D^n\to tD^n$ is the multiplication by $t$ we obtain a desired deformation.
\end{proof}

\begin{cor}
If every completely regular map with the fiber $D^n$ is a Hurewicz fibration, then the Injectivity Conjecture holds true  
for any uniformly bounded $A_p$-action on $\R^n$.
\end{cor}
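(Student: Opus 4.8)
The plan is to reduce the statement to the (unlabeled) theorem preceding Proposition~\ref{defo} by verifying its two hypotheses for the single fiber $D^n$. First I would use uniform boundedness to compactify $\R^n$ to the closed disk $D^n$ in such a way that the extended $A_p$-action fixes $\partial D^n$; this places us in the situation of Proposition~\ref{defo} and thereby settles Question~\ref{contraction} for this action. The hypothesis of the corollary is precisely a positive answer to Question~\ref{Hurewicz} for the fiber $D^n$. With both questions answered for this fiber, the theorem delivers the Injectivity Conjecture.

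For the compactification I would fix a radial homeomorphism $\Phi:\R^n\to\Int D^n$, say $\Phi(x)=x/(1+|x|)$, and conjugate the action to $\tilde g=\Phi\circ g\circ\Phi^{-1}$ on $\Int D^n$. Uniform boundedness means there is a constant $C$ with $d(gx,x)\le C$ for all $g\in A_p$ and all $x\in\R^n$. If $y_k\to p\in\partial D^n$ inside $\Int D^n$, then $x_k=\Phi^{-1}(y_k)\to\infty$ radially toward $p$, while $gx_k$ stays within distance $C$ of $x_k$; since $\Phi$ sends bounded $\R^n$-displacement to displacement tending to $0$ near the boundary, $\tilde g(y_k)=\Phi(gx_k)\to p$. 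Hence each $\tilde g$ extends continuously to $D^n$ with $\tilde g|_{\partial D^n}=\id$, and as $g^{-1}$ is uniformly bounded as well the extension is a homeomorphism. The same boundary estimate, uniform in $g$, shows the assignment $A_p\to Homeo(D^n,\partial D^n)$ is continuous for the compact-open topology, so we obtain an embedding $A_p\subset Homeo(D^n,\partial D^n)$.

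Now Proposition~\ref{defo} supplies a deformation $h_t:A_p\to Homeo(D^n,\partial D^n)$ to the identity through group homomorphisms, i.e.\ a positive answer to Question~\ref{contraction}, while the corollary's hypothesis answers Question~\ref{Hurewicz} for the fiber $D^n$. Feeding these into the theorem before Proposition~\ref{defo} with $F=D^n$ finishes the argument. I should note that the extended action fixes $\partial D^n$ and so is not free; this causes no difficulty, since that theorem's construction only uses freeness of the action on $E$ (the diagonal action on $(D^n\times[0,1])\times E$ is free by virtue of the $E$-factor), and complete regularity of the resulting projection over $B\times[0,1]$ is furnished, level by level, by Proposition~\ref{Borel cr} for the compact fiber $D^n$ exactly as in that proof.

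The hard part will be the compactification step. The real content is that ``uniformly bounded'' is exactly the hypothesis needed to produce a continuous extension by boundary-fixing homeomorphisms: one must check not merely that each $\tilde g$ extends and is invertible, but that the assignment $g\mapsto\tilde g$ is jointly continuous into $Homeo(D^n,\partial D^n)$. This is where the estimate $d(gx,x)\le C$, uniform in both $g$ and $x$, is indispensable, since a merely pointwise-bounded action need not extend continuously to $\partial D^n$.
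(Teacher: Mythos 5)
Your proposal is correct and is essentially the paper's intended argument: the paper states this corollary without proof, immediately after Proposition~\ref{defo} and the theorem on Questions~\ref{Hurewicz} and~\ref{contraction}, precisely because the argument is the one you give --- uniform boundedness lets the action extend over the radial compactification to an embedding $A_p\subset Homeo(D^n,\partial D^n)$, Proposition~\ref{defo} then settles Question~\ref{contraction}, and the hypothesis settles Question~\ref{Hurewicz} for the fiber $D^n$. Your additional observations (the uniform-in-$g$ boundary estimate giving continuity of $g\mapsto\tilde g$, and that freeness is only needed for the action on $E$, with complete regularity supplied by Proposition~\ref{Borel cr}) correctly fill in the details the paper leaves implicit.
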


We call a $G$-action on a metric space $X$ {\em uniformly bounded} if there is an upper bound on the diameter of
orbits. Note that an action of $A_p$ on a closed aspherical manifold defines a uniformly bounded action on its universal cover.

Perhaps Edwards-Kirby's theorem would allow to extend Proposition~\ref{defo} to all manifolds.
We recall that by the Edwards-Kirby theorem~\cite{EK} (which goes back to the proof of Chernavsky's theorem on the local contractibility of $Homeo(M)$~\cite{Che})  every homotopic to the identity homeomorphism $h:M\to M$  of a closed manifold can be presented as a finite composition
$h=h_n\circ\cdots\circ h_1$ of homeomorphisms fixing the complement to a ball.

\

\section{Moduli space of topological manifolds}

Let $F$ be a compact metric space. We denote by $Emb(F)=\{\phi:F\to s\}$ the space 
of all topological embeddings of $F$ into
the pseudo interior $s=(0,1)^{\omega}$ of the Hilbert cube $Q=[0,1]^{\omega}$ with the sup metric:
$$
d(\phi_1,\phi_2)=\sup\{\|\phi_1(x)-\phi_2(x)\|\  \mid x\in F\}.$$
The pseudo interior is chosen in order to deal with tame embeddings only.

The following theorem implies that the space $Emb(F)$ is  an 
absolute neighborhood extensor for compact metrizable spaces,
$ANE(\mathcal C)$. In particular, it implies that $Emb(F)$ is
$n$-connected and locally $n$-connected for all $n$.
\begin{thm}[\cite{Ch}]\label{appr}
Let $(A,A_0)$ be a compact pair and $f:A\to s$ is a map. Then for any $\epsilon>0$
there is an $\epsilon$-close map $g:A\to s$ that agrees with $f$ on $A_0$ and is an embedding on $A\setminus A_0$. 
\end{thm}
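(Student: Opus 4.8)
The plan is to prove the statement by a Baire category argument in a complete space of maps, realizing the maps that work as a dense $G_\delta$. First I would record the reformulation of the conclusion that drives the whole argument: since $A$ is compact and $s$ is Hausdorff, the restriction $g|_{A\setminus A_0}$ is an embedding as soon as it is injective in the strong sense that $g(x)\neq g(y)$ for every pair with $x\neq y$ and $x\notin A_0$ (coincidences being allowed only inside $A_0$). Indeed, if $x_k\in A\setminus A_0$ and $g(x_k)\to g(x)$ with $x\in A\setminus A_0$, a subsequence converges to some $x'\in A$ with $g(x')=g(x)$, and the strong injectivity forces $x'=x$; this simultaneously gives injectivity and rules out the image of $A\setminus A_0$ accumulating onto the image of an $A_0$-point, hence the homeomorphism onto the image.

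Next I would fix the ambient complete space. Work inside the maps $A\to Q$ with the sup metric, complete because $Q$ is compact, and let $\mathcal{C}$ be the closed (hence complete) subset of maps $g$ with $g|_{A_0}=f|_{A_0}$ and $d(g,f)\le\epsilon$. Write $C_m=\{x\in A:\dist(x,A_0)\ge 1/m\}$, an exhaustion of $A\setminus A_0$ by compacta, and for each pair $(m,n)$ set
$$V_{m,n}=\Big\{g\in\mathcal{C}: \inf\{\,\|g(x)-g(y)\|: x\in C_m,\ y\in A,\ d(x,y)\ge 1/n\,\}>0\Big\}.$$
Since the indexing set $\{(x,y):x\in C_m,\ d(x,y)\ge 1/n\}$ is compact, each $V_{m,n}$ is open in $\mathcal{C}$, and $\bigcap_{m,n}V_{m,n}$ is exactly the set of $g\in\mathcal{C}$ that embed $A\setminus A_0$ in the strong sense above. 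In parallel, the condition $g(A)\subseteq s$ equals $\bigcap_i\{g: g_i(A)\subseteq(0,1)\}$, a $G_\delta$ containing $f$. Thus the maps I want form a $G_\delta$ in $\mathcal{C}$, and by the Baire theorem it suffices to prove that each $V_{m,n}$, and the into-$s$ condition, are dense.

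Density into $s$ is routine: any $g$ may be pushed slightly off the finitely many faces that matter while fixing $A_0$. The crux is density of $V_{m,n}$, a perturbation lemma: given $g\in\mathcal{C}$ and $\tau>0$ one must produce a $\tau$-close $g'\in\mathcal{C}$, equal to $f$ on $A_0$, separating the compact off-diagonal set $D_{m,n}=\{(x,y):x\in C_m,\ d(x,y)\ge 1/n\}$ with a positive gap. Here I would exploit the infinitely many target coordinates: since coordinate $i$ of $Q$ contributes at most $2^{-i}$ to the sup metric, choosing $I$ large makes every coordinate of index $>I$ light, so those may be altered freely within $[0,1]$ at total cost $<\tau$, while the untouched coordinates $\le I$ keep $g'$ close automatically. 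Fix a topological embedding $e=(e_1,e_2,\dots):A\to Q$ and a Urysohn function $\beta:A\to[0,1]$ with $\beta\equiv 0$ on $A_0$ and $\beta\equiv 1$ on $C_m$, and in coordinate $I+j$ replace $g_{I+j}$ by the blend $(1-\beta)g_{I+j}+\beta\,s\,e_j$ for a small scale $s>0$. Then $g'=g=f$ on $A_0$ (where $\beta=0$ leaves the high coordinates fixed), and for $(x,y)\in D_{m,n}$ with $x\in C_m$ one has $\beta(x)=1$, so $g'(x)=g'(y)$ forces $s\,e_j(x)=(1-\beta(y))g_{I+j}(y)+\beta(y)\,s\,e_j(y)$ for all $j$; when $y$ is also bounded away from $A_0$ this collapses to $e(x)=e(y)$, contradicting $x\neq y$.

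The main obstacle is exactly the residual case $(x,y)\in D_{m,n}$ with $y$ close to $A_0$, where the fixed high-coordinate data $f_{I+j}(y)=g_{I+j}(y)$ can in principle match $s\,e_j(x)$ for every $j$; here the requirement $g'=f$ on $A_0$ forbids altering that data, so the single blend above need not separate. I expect to resolve this by splitting $D_{m,n}$ into the two compact families "both points far from $A_0$" and "one point near $A_0$" and treating them with different blocks of light coordinates: the first by the damped embedding as above, and the second by exploiting that points of $C_m$ are uniformly separated from $A_0$, adding further light coordinates that push the image of the far region off the fixed $A_0$-data and then extracting a uniform gap from compactness of $D_{m,n}$ — this is the technical heart of \cite{Ch}. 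Once every $V_{m,n}$ and the into-$s$ set are dense, Baire yields a single $g$ lying in all of them; it is $\epsilon$-close to $f$, agrees with $f$ on $A_0$, takes values in $s$, and embeds $A\setminus A_0$, which is the assertion.
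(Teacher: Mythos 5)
The paper itself contains no proof of this statement: it is quoted from Chapman's notes \cite{Ch}, so your proposal has to be judged on its own terms rather than against an argument in the text. Your framework is sound and standard where it is carried out: the reduction of ``embedding on $A\setminus A_0$'' to strong injectivity (no coincidence $g(x)=g(y)$ with $x\neq y$, $x\notin A_0$) is correct, the space $\mathcal{C}$ is complete, the sets $V_{m,n}$ are open and their intersection is exactly the strongly injective maps, and the into-$s$ condition is a dense $G_\delta$. But the proof is not complete: the density of $V_{m,n}$, which you yourself flag as the crux, is exactly where you stop proving and start hoping (``I expect to resolve this by\dots''). That is a genuine gap, and the sketch you offer does not close it as stated: you propose to push the image of the far region ``off the fixed $A_0$-data'', but the problematic partners $y$ are points \emph{near} $A_0$ with $0<\beta(y)<1$, whose values under the perturbed map are blends of $g$-data with embedding data, not fixed data at all. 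Separating $g'(C_m)$ from $f(A_0)$ does not by itself separate it from $g'(y)$ for such $y$; one needs a further argument whose quantifier order (choice of a neighborhood of $A_0$ versus choice of the perturbation, which itself changes the map near $A_0$) you have not pinned down.

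The gap is fixable inside your own framework, and it is worth recording how. Since the into-$s$ conditions form a dense $G_\delta$ in $\mathcal{C}$, when proving density of $V_{m,n}$ you may first move $g$ so that $g(A)\subseteq s$; then each light coordinate has image in a compact interval $[a,b]\subset(0,1)$, leaving room above $b$. Pick $c\in(b,1)$ and, in addition to your damped-embedding block, blend one extra light coordinate toward the constant $c$, namely $g'_{I+1}=(1-\beta)\,g_{I+1}+\beta c$. On $C_m$ this coordinate equals $c$ exactly, while at any $y$ with $\beta(y)<1$ it is a proper convex combination of a number at most $b$ with $c$, hence strictly less than $c$. So a coincidence pair $(x,y)\in D_{m,n}$ forces $\beta(y)=1$, and then your embedding block gives $e(x)=e(y)$, i.e.\ $x=y$, contradicting $d(x,y)\ge 1/n$; positivity of the infimum then follows because $D_{m,n}$ is compact, so the infimum is attained and cannot vanish. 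Two smaller repairs: to stay inside the closed ball $d(\cdot,f)\le\epsilon$ while perturbing, first pull $g$ strictly inside by a convex combination with $f$ (this preserves agreement on $A_0$ since $Q$ is convex); and your scale factor clashes notationally with the pseudo-interior $s$ and is unnecessary anyway, since coordinates of index greater than $I$ need no damping to keep the perturbation small.
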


We note that the group of homeomorphisms of $F$ taken with the compact-open topology, 
$H=Homeo(F)$, acts on $Emb(F)$ from the right by precomposing:
$\phi \to \phi\circ h$, $h\in H$.
Thus $\phi_1,\phi_2\in Emb(F)$ are in the same orbit if and only if $im\phi_1=im\phi_2$.
Note that $H$ acts on $Emb(F)$ by isometries: $d(\phi_1,\phi_2)=d(\phi_1\circ h,\phi_2\circ h)$.
We call the orbit space of such action {\em the moduli space} of $F$ and denote it by $\mathcal M(F)$.
Let $q_F:Emb(F)\to \mathcal M(F)=Emb(F)/H$ be the projection to the orbit space.
We consider the quotient metric $\rho$ on  $\mathcal M(F)$:
$$
\rho(\phi_1H,\phi_2H)=\inf\{d(\phi_1\circ h,\phi_2) \mid h\in H\}.$$
We check that $\rho$ is a metric. It is symmetric, since
$d(\phi_1\circ h,\phi_2)=d(\phi_1,\phi_2\circ h^{-1})$. If $\phi_1H\ne\phi_2H$, then $im\phi_1\ne im\phi_2$.
Then for any $h_1,h_2\in H$, $$d(\phi_1\circ h_1,\phi_2\circ h_2)\ge d_H^Q(im\phi_1,im\phi_2)>0$$ where $d_H^Q$ 
is the Hausdorff distance on the closed subsets of $Q$. Let 
for $i=1,2$ $h_i$ be such that $d(\phi_ih_i,\phi_3)-\rho (\phi_iH,\phi_3H)<\epsilon/2$. Then the triangle inequality follows
when $\epsilon\to 0$:
$$\rho(\phi_1H,\phi_2H)\le d(\phi_1h_1,\phi_2h_2)\le d(\phi_1h_1,\phi_3)+d(\phi_2h_2,\phi_3)<$$
$$<\rho(\phi_1H,\phi_3H)+
\rho(\phi_3H,\phi_2H)+\epsilon.$$

We will identify each orbit $\phi H\in\mathcal M(F)$ with the subset $\phi(F)$ of the Hilbert cube.

\begin{prop}\label{displacement}
For $F_1,F_2\in \mathcal M(F)$, $\rho(F_1,F_2)<\epsilon$ if and only if there is a homeomorphism $g:F_1\to F_2$
with the displacement $$D_g=\max\{\|g(x)-x\| \mid x\in F_1\}<\epsilon.$$
\end{prop}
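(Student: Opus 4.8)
The plan is to unwind the definitions and reduce the statement to the elementary fact that an infimum is $<\epsilon$ exactly when the set over which it is taken contains an element $<\epsilon$. First I fix embeddings $\phi_1,\phi_2\in Emb(F)$ representing the two orbits, so that $\phi_1(F)=F_1$ and $\phi_2(F)=F_2$ as subsets of $Q$. By construction every embedding with image $F_1$ is of the form $\phi_1\circ h$ for a unique $h\in H=Homeo(F)$, and the quotient metric is $\rho(F_1,F_2)=\inf_{h\in H}d(\phi_1\circ h,\phi_2)$.

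The key step is a change of variables that turns each term in this infimum into a displacement. To $h\in H$ I associate the homeomorphism $g_h=\phi_2\circ h^{-1}\circ\phi_1^{-1}\colon F_1\to F_2$. Setting $y=\phi_1(h(x))$, so that $y$ ranges over $F_1$ as $x$ ranges over $F$ and $\phi_2(x)=g_h(y)$, I get $\|\phi_1(h(x))-\phi_2(x)\|=\|y-g_h(y)\|$, and taking the supremum over $F$ yields
$$d(\phi_1\circ h,\phi_2)=\sup_{y\in F_1}\|g_h(y)-y\|=D_{g_h},$$
the supremum being attained because $y\mapsto\|g_h(y)-y\|$ is continuous on the compactum $F_1$. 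I then verify that $h\mapsto g_h$ is a bijection from $H$ onto the set of all homeomorphisms $F_1\to F_2$, with inverse $g\mapsto\phi_1^{-1}\circ g^{-1}\circ\phi_2$. Since the two index sets correspond under this bijection with matching values, the infima agree:
$$\rho(F_1,F_2)=\inf_{h\in H}d(\phi_1\circ h,\phi_2)=\inf\{D_g\mid g\colon F_1\to F_2\text{ a homeomorphism}\}.$$

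With this identity the equivalence is immediate in both directions. If some homeomorphism $g$ satisfies $D_g<\epsilon$, then $\rho(F_1,F_2)\le D_g<\epsilon$. Conversely, if $\rho(F_1,F_2)<\epsilon$, then since $\rho(F_1,F_2)$ is precisely the infimum of the displacements $D_g$, there must be a homeomorphism $g$ with $D_g<\epsilon$. There is no genuine obstacle here: the entire content lies in the bookkeeping of the correspondence $h\leftrightarrow g_h$ and in checking that it is a bijection, after which the strict inequality passes through the infimum in both directions for free. The one point deserving a word of care is that $D_g$ is an attained maximum rather than a mere supremum, which is what makes the change-of-variable identity above exact.
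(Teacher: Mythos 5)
Your proof is correct and is essentially the paper's own argument: the paper likewise passes between $h\in H$ and homeomorphisms $g\colon F_1\to F_2$ via $g=\phi_2 h\phi_1^{-1}$ (resp.\ $h=\phi_2^{-1}g\phi_1$), which is exactly your change of variables identifying the terms $d(\phi_1\circ h,\phi_2)$ of the defining infimum of $\rho$ with the displacements $D_g$. The only difference is presentational: you package the correspondence as a bijection of index sets so that the two infima coincide, while the paper simply exhibits the witness in each of the two directions.
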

\begin{proof} Let $F_i=\phi_i(F)$, $\phi_i\in Emb(F)$, $i=1,2$..

In one direction, since $d(\phi_1,\phi_2h)<\epsilon$ for some $h\in H$, we obtain that $D_g<\epsilon$ for
$g=\phi_2h\phi_1^{-1}$.

In the other direction,  $d(\phi_1,\phi_2h)<\epsilon$ for $h=\phi_2^{-1}g\phi_1$ if $D_g<\epsilon$.
\end{proof}

Let $\mathcal E(F)=\{(F',x)\in\mathcal M(F)\times s\mid x\in F'\}\subset \mathcal M(F)\times s$ and let $\nu_F:\mathcal E(F)\to\mathcal M(F)$ be the restriction of the projection onto the first factor.
We note that $\nu_F$ is completely regular.

\begin{prop}\label{universal}
For every continuous map $f:X\to \mathcal M(F)$ the pull-back $f^*(\nu_F)$ is completely regular.

For every completely regular map $p:X\to Y$  between compact metric spaces with fiber $F$ there is a continuous map $f:Y\to \mathcal M(F)$ such that
$p=f^*(\nu_F)$.
\end{prop}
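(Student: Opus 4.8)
The plan is to treat the two assertions separately: the first is formal (pull-backs inherit complete regularity), while the second --- that $\nu_F$ is a universal completely regular map with fibre $F$ --- carries the real content.

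For the first assertion I would check directly that the pull-back of a completely regular map along a continuous map is completely regular, and apply this to $\nu_F$, which is completely regular by the remark preceding the proposition. The total space of $f^{*}(\nu_F)$ is $\{(x,e)\in X\times\mathcal E(F)\mid f(x)=\nu_F(e)\}$, and its fibre over $x$ is canonically homeomorphic to $\nu_F^{-1}(f(x))\cong F$. Given $x\in X$ and $\epsilon>0$, choose $\delta'>0$ for the pair $f(x),\epsilon$ from the complete regularity of $\nu_F$, and then, using continuity of $f$, choose $\delta\le\epsilon$ so that $d_X(x,x')<\delta$ forces $\rho(f(x),f(x'))<\min\{\delta',\epsilon\}$. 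The homeomorphism of $\nu_F$-fibres supplied by complete regularity of $\nu_F$, combined with the change of $X$-coordinate from $x$ to $x'$, then yields a homeomorphism of the pull-back fibres whose displacement is at most $\epsilon$ in every coordinate of the product metric on $X\times\mathcal E(F)$. Since complete regularity is independent of the choice of metrics, this suffices.

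For the second assertion the idea is to realise $X$ as a continuous family of copies of $F$ sitting in $s$. First I would apply \theoref{appr} with $A=X$ and $A_0=\empt$ to obtain an embedding $\sigma\colon X\to s$ of the compactum $X$. Since the restriction of an embedding to a compact subset is again an embedding, $\sigma$ restricts on each fibre to an embedding $\sigma|_{p^{-1}(y)}$ whose image $\sigma(p^{-1}(y))\subset s$ is homeomorphic to $p^{-1}(y)\cong F$, hence is a point of $\mathcal M(F)$. I then define $f\colon Y\to\mathcal M(F)$ by $f(y)=\sigma(p^{-1}(y))$ and consider $\Phi=(p,\sigma)\colon X\to Y\times s$. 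One computes $\Phi(X)=\{(y,x')\in Y\times s\mid x'\in f(y)\}$, which is exactly the total space of $f^{*}(\nu_F)$, and $\Phi$ commutes with the projections to $Y$.

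The identification $p=f^{*}(\nu_F)$ then reduces to two points. The map $\Phi$ is injective, since $\Phi(x_1)=\Phi(x_2)$ forces $x_1,x_2$ into a common fibre $p^{-1}(y)$ on which $\sigma$ is injective; being a continuous bijection of a compactum onto the Hausdorff space $f^{*}\mathcal E(F)$, it is therefore a homeomorphism over $Y$. The remaining --- and only substantial --- point is the continuity of $f$ into $(\mathcal M(F),\rho)$, which is precisely where complete regularity of $p$ is used. Given $y$ and $\epsilon>0$, uniform continuity of $\sigma$ on the compactum $X$ yields $\epsilon'>0$ with $\|\sigma(x)-\sigma(x'')\|<\epsilon$ whenever $d_X(x,x'')<\epsilon'$; complete regularity of $p$ then gives $\delta>0$ so that $d_Y(y,y')<\delta$ produces a homeomorphism $h\colon p^{-1}(y)\to p^{-1}(y')$ with $d_X(x,h(x))<\epsilon'$ for all $x$. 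The conjugate $g=\sigma\circ h\circ(\sigma|_{p^{-1}(y)})^{-1}\colon f(y)\to f(y')$ is a homeomorphism of displacement $D_g<\epsilon$, so \propref{displacement} gives $\rho(f(y),f(y'))<\epsilon$. This yields continuity of $f$ and completes the argument. I expect the continuity of $f$ to be the main obstacle, as it is the one step genuinely requiring the complete regularity hypothesis rather than formal manipulation.
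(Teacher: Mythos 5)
Your proof is correct and follows essentially the same route as the paper's: embed $X$ into $s$, set $f(y)=\sigma(p^{-1}(y))$, and derive continuity of $f$ from complete regularity of $p$ together with Proposition~\ref{displacement}. The only difference is that you also spell out the two steps the paper leaves implicit --- the first assertion about pull-backs and the identification $p\cong f^{*}(\nu_F)$ (the paper's ``clearly'') --- and both of your verifications are sound.
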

\begin{proof}
Any embedding $j:X\to s$ defines a map $f:Y\to \mathcal M(F)$ by $f(y)=j(p^{-1}(y))$. Let $y_k\to y$ be a convergent sequence in $Y$. Then there is a sequence of
homeomorphisms $h_k:j(p^{-1}(y))\to j(p^{-1}(y_k))$ with $D_{h_k}\to 0$. By Proposition~\ref{displacement} $\rho(f(y),f(y_k))\to 0$. Therefore $f$ is continuous.
Clearly, $p$ is isomorphic to $f^*(\nu_F)$.
\end{proof}

\subsection{Moduli space of $Q$-manifolds}
\begin{prop}\label{Serre fibration}
Let $F$ be such that $Homeo(F)$ is locally contractible. Then

(1) $\mathcal M(F)$ is locally path connected.

(2) $q_F:Emb(F)\to\mathcal M(F)$ is a Serre fibration with the fiber $q_F^{-1}(y)\cong Homeo(F)$ for all $y\in \mathcal M(F)$.
\end{prop}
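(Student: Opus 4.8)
The plan is to treat the two assertions separately, viewing (2) as the claim that the orbit map of the free isometric right $H$-action, $q_F\colon Emb(F)\to\mathcal M(F)$ with $H=Homeo(F)$, is a Serre fibration. For (1), I would first observe that $q_F$ is an \emph{open} map. Working with the quotient metric $\rho$, the $1$-Lipschitz estimate $\rho(q_F\psi,q_F\phi)\le d(\psi,\phi)$ gives $q_F(B(\phi,r))\subset B_\rho(q_F\phi,r)$, while any $F'$ with $\rho(q_F\phi,F')<r$ has, by definition of $\rho$, a representative $\psi$ with $d(\phi,\psi)<r$; hence $q_F(B(\phi,r))=B_\rho(q_F\phi,r)$ is open. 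By Theorem~\ref{appr} the space $Emb(F)$ is $ANE(\mathcal C)$, hence locally path connected, and an open continuous surjection carries local path connectedness onto its image, so $\mathcal M(F)$ is locally path connected (this part uses neither local contractibility of $Homeo(F)$ nor Proposition~\ref{displacement}). For the fiber, the orbit $q_F^{-1}(\phi(F))=\phi H$ carries a free action, since $\phi\circ h=\phi$ forces $h=\mathrm{id}$ for an injective $\phi$; thus $h\mapsto\phi\circ h$ is a continuous bijection $H\to\phi H$ whose inverse $\psi\mapsto\phi^{-1}\circ\psi$ is continuous because $\phi^{-1}$ is a homeomorphism of $\phi(F)$ onto $F$. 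Hence $q_F^{-1}(y)\cong Homeo(F)$.

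For (2) I would verify the homotopy lifting property against cubes, which characterizes Serre fibrations. Fix $\bar H\colon I^k\times I\to\mathcal M(F)$ and a lift $f_0\colon I^k\times\{0\}\to Emb(F)$ of its restriction to the bottom, and regard $\bar H$ as a continuous family of compacta $F_u\subset s$, $u\in I^k\times I$. Using uniform continuity of $\bar H$ and compactness of $I^{k+1}$, I would subdivide $I^{k+1}$ into subcubes so small that $\bar H$ maps each into a $\rho$-ball of radius $<\epsilon$, where $\epsilon$ is to be fixed below in terms of the local contraction of $Homeo(F)$. By Proposition~\ref{displacement}, for any two parameters $u,u'$ in a common subcube the compacta $F_u,F_{u'}$ are related by a homeomorphism of displacement $<\epsilon$, so all fibers $q_F^{-1}(F_u)$ over one subcube are mutually identified through near-identity homeomorphisms of $F$.

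I would then construct the lift $\tilde H$ by induction over the skeleta of this subdivision, ordering the cells so that upon reaching a cell the lift is already defined on the part of its boundary contained in $I^k\times\{0\}$ together with the previously treated cells. Over a single small cell the task is to extend an embedding-valued map, prescribed on part of the boundary, to the whole cell while realizing the prescribed image $F_u$ at each parameter. One transports the boundary data by the small-displacement homeomorphisms of Proposition~\ref{displacement}; the resulting candidate extension matches the prescribed boundary only modulo the $H$-action, the discrepancy being a transition homeomorphism of $F$ close to $\mathrm{id}$. Choosing $\epsilon$ small enough that every such transition lies in a contractible neighborhood $\mathcal W$ of $\mathrm{id}$, the local contraction $c\colon\mathcal W\times I\to Homeo(F)$ supplied by local contractibility of $Homeo(F)$ absorbs the discrepancy continuously and fills the cell. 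Performing this cell by cell yields the desired continuous lift $\tilde H$.

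The hard part will be exactly this gluing step. The fiberwise identifications coming from Proposition~\ref{displacement} are canonical only up to the $Homeo(F)$-action, so the naive transport is neither continuous nor compatible on overlaps; local contractibility of $Homeo(F)$ is precisely the hypothesis that converts these $H$-ambiguous, pairwise-small identifications into a globally continuous lift, by contracting the near-identity transition homeomorphisms to $\mathrm{id}$. The delicate bookkeeping is to choose the subdivision fine enough that all accumulated transitions remain inside the contraction neighborhood $\mathcal W$ throughout the skeletal induction.
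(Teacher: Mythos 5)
Your part (1) is correct, and it is a genuinely different argument from the paper's: you show $q_F$ is open via the identity $q_F(B(\phi,r))=B_\rho(q_F\phi,r)$ and push local path connectedness of $Emb(F)$ forward through an open continuous surjection, whereas the paper constructs explicit short paths between nearby orbits using the linear homotopy in $s$ and the relative embedding approximation of Theorem~\ref{appr}. Your route is shorter and uses neither Proposition~\ref{displacement} nor local contractibility of $Homeo(F)$; the paper's gives a little more (paths of controlled diameter), which the statement does not require. Your identification of the fiber with $Homeo(F)$ is also fine.

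Part (2), however, has a genuine gap, located exactly where you place the ``hard part.'' Proposition~\ref{displacement} is a pointwise existence statement: for each pair $u,u'$ it produces \emph{some} homeomorphism $F_u\to F_{u'}$ of small displacement, with no continuity or coherence in $(u,u')$. Your cell-filling step needs much more: a \emph{continuous} family $u\mapsto\phi_u\in Emb(F)$ with $q_F(\phi_u)=F_u$ exactly (lifting is an equality, not an approximation), extending prescribed boundary data. ``Transporting the boundary data by the small-displacement homeomorphisms'' already presupposes such a continuous family, and contracting near-identity transition homeomorphisms by local contractibility of $Homeo(F)$ can only repair a mismatch \emph{after} a continuous transport exists; it cannot create one. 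Producing identifications of nearby fibers that vary continuously in the parameter is precisely the content of the Dyer--Hamstrom theorem, proved via Michael's selection theorem, and it is delicate: as the paper remarks, the original Dyer--Hamstrom argument contained an error later corrected by Hamstrom. Your sketch in effect re-derives that theorem while burying its entire difficulty in the phrase ``absorbs the discrepancy continuously and fills the cell.'' A further warning sign: as described, the transport-and-contract mechanism uses the smallness of a cell only to keep transitions near the identity, so essentially the same words would ``produce'' local sections of $\nu_F$ over balls in $\mathcal M(F)$, which would put you in the territory of the open Question~\ref{Hurewicz}; an argument that soft cannot be complete.

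The paper sidesteps all of this by quoting the known theorem: by Proposition~\ref{universal} the pullback $H^*(\nu_F)$ over $I^n\times I$ is completely regular; since $I^n\times I$ is finite dimensional and $Homeo(F)$ is locally contractible, Theorem~\ref{DH} makes it a locally trivial, hence trivial, $F$-bundle over the cube; the given lift over $I^n\times\{0\}$ is a trivialization over the bottom face, which extends over the cube and yields the lift. Note that the finite dimensionality of the cube is exactly where the hypothesis of Theorem~\ref{DH} is used, and is the reason the proposition claims only a Serre (not Hurewicz) fibration. To repair your proof, either carry out the Michael selection argument in full, or simply invoke Theorem~\ref{DH} as the paper does.
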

\begin{proof}
(1) Let $F_1$ and $F_2$ be elements of $\mathcal M(F)$ at a distance $\rho(F_1,F_2)<\delta$. Thus, $F_1,F_2\subset s$ and
there is a homeomorphism $h:F_1\to F_2$ with the displacement $D_h<\delta$. Let $f:F\to F_1$ be a homeomorphism.
We consider a linear homotopy $H:F\times I\to s$ between $f$ and $h\circ f$. By Theorem~\ref{appr} there is a $\delta$-approximation
$H':F\times I\to s$ of $H$ by an embedding that coincides with $H$ on $F\times\{0,1\}$.
This defines a path from $F_1$ to $F_2$
in $\mathcal M(F)$ of diameter $<2\delta$.

(2) Let $H:I^n\times I\to\mathcal M(F)$ and $h:I^n\times\{0\}\to Emb(F)$ with $q_Fh=H|_{I^n\times\{0\}}$. In view of Theorem~\ref{DH}, $H^*(\nu_F)$ is a trivial fiber bundle with fiber $F$. The map $h$ defines a trivialization of $H^*(\nu_F)$ over $I^n\times\{0\}$. The projection $I^n\times I\to I^n$ defines extension
of that trivialization to a trivialization over whole $I^n\times I$. This trivialization defines a lift $\bar H$ of $H$ that extends $h$.
\end{proof}

We use the standard notation $LC^n$ for the class of locally $n$-connected spaces.

\begin{thm}[G.S. Ungar~\cite{U}]\label{Ungar}
Let $p:E\to B$ be a Serre fibration of metric spaces, $E$ is $LC^n$, $p^{-1}(b)$ is $LC^{n-1}$ for all $b\in B$ and $B$ is
$LC^0$. Then $B$ is $LC^n$.
\end{thm}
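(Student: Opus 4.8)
The plan is to verify the $LC^n$ condition for $B$ directly at an arbitrary point $b\in B$, by lifting small spheres into $E$, contracting them there, and pushing the contraction back down by $p$. Fix a neighborhood $U$ of $b$ and a point $e\in p^{-1}(b)$. Since $p$ is continuous and $U$ is open, $p^{-1}(U)$ is a neighborhood of $e$, so the hypothesis that $E$ is $LC^n$ furnishes a neighborhood $W\ni e$ with the property that every map $S^k\to W$ with $0\le k\le n$ extends to a map $D^{k+1}\to p^{-1}(U)$. It therefore suffices to produce a neighborhood $V\subseteq U$ of $b$ such that each map $\sigma\colon S^k\to V$ (for $0\le k\le n$) admits a lift $\tilde\sigma\colon S^k\to E$ with $p\tilde\sigma=\sigma$ and $\tilde\sigma(S^k)\subseteq W$; for then $\tilde\sigma$ bounds a disk in $p^{-1}(U)$, and the $p$-image of that disk is the desired extension of $\sigma$ into $U$. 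The case $k=0$ is exactly the hypothesis that $B$ is $LC^0$, so the content lies in producing controlled lifts for $1\le k\le n$.

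To build the lift, I would triangulate $S^k$ by a complex $K$ of mesh so fine that $\sigma$ carries each simplex into a very small set, and then construct $\tilde\sigma$ skeleton by skeleton. Over the $0$-skeleton one selects, for each vertex $v$, a point $\tilde\sigma(v)\in p^{-1}(\sigma(v))$ lying close to $e$; over $1$-simplices one uses the path-lifting property of the Serre fibration $p$, together with local path-connectivity (which holds for $E$ because $LC^n$ implies $LC^0$, and for $B$ by hypothesis), to join the chosen vertex-lifts by short lifted arcs. Suppose inductively that $\tilde\sigma$ has been defined on the $(j-1)$-skeleton, $2\le j\le k$, inside $W$. For each $j$-simplex $\Delta$ the boundary lift $\tilde\sigma|_{\partial\Delta}$ covers $\sigma|_{\partial\Delta}$; applying the homotopy lifting property to the contraction of $\Delta$ onto a boundary point reduces the extension problem to filling a map $\partial\Delta\cong S^{j-1}$ into a single fiber $p^{-1}(\sigma(\mathrm{pt}))$. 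Because $j-1\le k-1\le n-1$, the hypothesis that the fibers are $LC^{n-1}$ lets us cap off this $(j-1)$-sphere by a small $j$-disk inside the fiber, completing the extension over $\Delta$ while staying in $W$. Carrying this through the top skeleton yields the required lift $\tilde\sigma\colon S^k\to W$.

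The main obstacle is quantitative rather than conceptual: one must arrange that the entire inductively built lift stays inside the fixed neighborhood $W$ of $e$. This forces a careful backward choice of nested neighborhoods --- starting from $W$ and the $LC^n$ data of $E$, then the $LC^{n-1}$ data of the fibers near $e$, then the local connectivity constants for $B$ and $E$ --- so that a sufficiently fine triangulation of the compact sphere $S^k$ produces simplex images small enough for every capping operation to land in $W$. Compactness of $S^k$ and of a neighborhood of $e$ is what makes the finitely many local constants combine into uniform ones; the $LC^0$ hypothesis on $B$ is what guarantees the small connected neighborhoods $V$ in which $\sigma$ is assumed to lie, and it also secures the base case $k=0$. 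I would organize the size bookkeeping as a finite downward induction on the skeleta (equivalently on $n$), proving at each stage that the partial lift is $\epsilon$-close to $e$ with $\epsilon$ shrinking to a value compatible with $W$; once this control is in place, the projection by $p$ of the $LC^n$ contraction in $E$ completes the verification that $B$ is $LC^n$.
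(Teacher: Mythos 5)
The first thing to note is that the paper contains no proof of this statement at all: Theorem~\ref{Ungar} is imported verbatim from Ungar~\cite{U} and used as a black box, so there is no argument of the author's to compare yours against; your proposal has to stand on its own, and as written it does not. Your reduction is the natural one and is sound as far as it goes: if every map $\sigma\colon S^k\to V$ admitted a lift landing in the neighborhood $W$ of $e$ supplied by the $LC^n$ property of $E$ relative to $p^{-1}(U)$, then contracting upstairs and projecting by $p$ would finish the proof. The genuine gap is that every stage of your lift construction demands metric control that the Serre hypothesis simply does not provide: the homotopy lifting property for polyhedra is a bare existence statement with no size estimates. Concretely: (a) already at the $0$-skeleton you need $p^{-1}(\sigma(v))$ to contain a point near $e$, i.e.\ that $p(W')$ be a neighborhood of $b$ for every neighborhood $W'$ of $e$; Serre fibrations need not be open or quasi-open maps, and you give no argument --- deriving such a statement from the $LC$ hypotheses is part of what a proof of this theorem must accomplish. (b) Over $1$-simplices, path lifting produces an arc starting at one chosen vertex lift, but with no bound on its diameter and no way to force its terminal point to be the other chosen vertex lift; correcting the endpoint inside a fiber requires the two candidate points to be close, whereas fibers are only \emph{locally} connected and the two points may be far apart or in different path components. (c) In the capping step you use $LC^{n-1}$ of the fiber as if it capped arbitrary $(j-1)$-spheres; it is a local property that caps only \emph{small} spheres, while the sphere handed to you by the homotopy lifting property carries no smallness --- globally the fiber may have nontrivial $\pi_{j-1}$. (d) The uniformity you propose to extract from ``compactness\dots of a neighborhood of $e$'' is unavailable: $E$ is merely a metric space, not locally compact.

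These are not bookkeeping difficulties; converting the purely homotopy-theoretic Serre hypothesis into metrically controlled lifts is the entire content of the theorem, and is what Ungar's argument has to achieve by other means. It is worth recording that your scheme essentially does work if $p$ is assumed to be a \emph{Hurewicz} fibration: over a metrizable base every Hurewicz fibration admits a regular lifting function (one carrying constant paths to constant paths), and continuity of that lifting function at the pair consisting of $e$ and the constant path at $b$ yields exactly the controlled path and homotopy lifts your skeleton-by-skeleton induction needs. But the weaker Serre hypothesis is what this paper actually requires: Theorem~\ref{Ungar} is applied to $q_F\colon Emb(F)\to\mathcal M(F)$, and Proposition~\ref{Serre fibration} establishes only that $q_F$ is a Serre fibration. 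So your proposal proves, at best, the Hurewicz case of the statement, and bridging from there to the Serre case requires an idea that is not present in the write-up.
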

\begin{cor}\label{homeo}
Suppose that $H=Homeo(F)$ is a locally contractible. Then $\mathcal M(F)$ is $LC^n$ for all $n$.
\end{cor}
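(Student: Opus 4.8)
The plan is to deduce this directly from Ungar's theorem (Theorem~\ref{Ungar}) applied to the Serre fibration $q_F:Emb(F)\to\mathcal M(F)$, proceeding by fixing an arbitrary $n$ and verifying the three standing hypotheses on the total space, the fiber, and the base.

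First I would assemble the ingredients already established. The map $q_F$ is a Serre fibration with fiber homeomorphic to $H=Homeo(F)$ by Proposition~\ref{Serre fibration}(2), and its base $\mathcal M(F)$ is locally path connected, that is $LC^0$, by Proposition~\ref{Serre fibration}(1). The fiber $H$ is locally contractible by hypothesis, hence $LC^{n-1}$, since a locally contractible space is $LC^k$ for every $k$. For the total space, Theorem~\ref{appr} shows that $Emb(F)$ is an absolute neighborhood extensor for compact metrizable spaces, and as noted immediately after that theorem this forces $Emb(F)$ to be locally $n$-connected for all $n$; in particular $Emb(F)$ is $LC^n$.

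With these in hand I would invoke Theorem~\ref{Ungar} with $E=Emb(F)$, $B=\mathcal M(F)$, and $p=q_F$: the total space is $LC^n$, every fiber is $LC^{n-1}$, and the base is $LC^0$, whence the conclusion that $\mathcal M(F)$ is $LC^n$. Since $n$ was arbitrary, $\mathcal M(F)$ is $LC^n$ for all $n$, which is exactly the assertion of the corollary.

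There is no substantive obstacle here, as the statement is a formal consequence of the preceding results; the only points requiring attention are that local contractibility of $H$ is what supplies the fiber hypothesis $LC^{n-1}$, and that the absolute neighborhood extensor property of $Emb(F)$ is what supplies the total-space hypothesis $LC^n$. The one place demanding genuine care is that Ungar's theorem takes the base being $LC^0$ as a prior hypothesis rather than a conclusion, so I would be careful to cite Proposition~\ref{Serre fibration}(1) for this fact \emph{before} applying Theorem~\ref{Ungar}, rather than expecting it to emerge from the fibration argument itself.
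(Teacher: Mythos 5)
Your proof is correct and follows exactly the paper's own argument: apply Ungar's theorem (Theorem~\ref{Ungar}) to the Serre fibration $q_F:Emb(F)\to\mathcal M(F)$, using $Emb(F)\in ANE$ for the total space, local contractibility of $Homeo(F)$ for the fiber, and Proposition~\ref{Serre fibration} for the fibration structure and the $LC^0$ base. The paper states this in one line; you have merely made the verification of each hypothesis explicit, which is a faithful expansion rather than a different route.
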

\begin{proof}
We apply Theorem~\ref{Ungar} to the map $q_F:Emb(F)\to\mathcal M(F)$. Since $Emb(F)\in ANE$ and in view of Proposition~\ref{Serre fibration}
we obtain that $\mathcal M(F)$ is $LC^n$.
\end{proof}

We denote by $ANE(n)$ the class of absolute neighborhood extensors for $n$-dimensional compact metric spaces. Note 
Kuratowski's theorem characterizes  $ANE(n)$s as $LC^n$ spaces.

\begin{thm}\label{ANE(n)}
For any compact $Q$-manifold $F$, $\mathcal M(F)\in ANE(n)$ for all $n$.
\end{thm}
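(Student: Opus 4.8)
The plan is to reduce the assertion entirely to one external input --- the local contractibility of $Homeo(F)$ for a compact $Q$-manifold $F$ --- after which every remaining step is already contained in this section. The organizing observation is that Corollary~\ref{homeo} turns local contractibility of $Homeo(F)$ directly into the statement that $\mathcal M(F)$ is $LC^n$ for all $n$, while Kuratowski's characterization (recalled just above the theorem) identifies the class $LC^n$ with $ANE(n)$. So the whole proof collapses to verifying the hypothesis of Corollary~\ref{homeo}.

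First I would invoke the infinite-dimensional analogue of the Chernavsky--Edwards--Kirby local contractibility theorem: for a compact $Q$-manifold $F$ the homeomorphism group $H = Homeo(F)$, in the compact-open topology, is locally contractible. In fact Ferry proved the stronger statement that $Homeo(F)$ is an $\ell_2$-manifold, hence an ANR and a fortiori locally contractible. This is the single genuinely deep ingredient, and I expect it to be the main obstacle --- not because the subsequent deduction is hard, but because it is the only point at which nontrivial infinite-dimensional topology enters. Everything downstream is formal.

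Second, granting that $H$ is locally contractible, I would feed this hypothesis into Corollary~\ref{homeo} to conclude that $\mathcal M(F)$ is $LC^n$ for every $n$. It is worth recording how that corollary is assembled, so the argument reads as self-contained: Proposition~\ref{Serre fibration} shows, using the local contractibility of $H$, that $q_F:Emb(F)\to\mathcal M(F)$ is a Serre fibration with fiber homeomorphic to $H$ and that $\mathcal M(F)$ is $LC^0$; since $Emb(F)\in ANE(\mathcal C)$ by Theorem~\ref{appr} it is $LC^n$ for all $n$, and the fibers $q_F^{-1}(y)\cong H$ are $LC^{n-1}$; Ungar's Theorem~\ref{Ungar} then propagates this local connectivity from total space and fiber to the base.

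Finally, I would close by Kuratowski's theorem, which characterizes $ANE(n)$ precisely as the class of $LC^n$ metric spaces. Since $\mathcal M(F)$ is $LC^n$ for every $n$, it follows that $\mathcal M(F)\in ANE(n)$ for every $n$, which is the assertion.
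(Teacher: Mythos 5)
Your proposal is correct and follows essentially the same route as the paper: invoke Ferry's theorem that $Homeo(F)$ is an ANR (hence locally contractible) for a compact $Q$-manifold, feed this into Corollary~\ref{homeo} to get $\mathcal M(F)\in LC^n$, and conclude via Kuratowski's characterization of $ANE(n)$. (Minor attribution point: the cited Ferry result is the ANR statement, not the $\ell_2$-manifold refinement, but nothing in your argument depends on the stronger claim.)
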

\begin{proof}
We apply Ferry's theorem~\cite{F4} which states that $Homeo(F)$ is an ANE for a Q-manifold $F$, Corollary~\ref{homeo}, and Kuratowski's characterization of $ANE(n)$.
\end{proof} 

\begin{prob}\label{anr}
Let $F$ be a compact $Q$-manifold.
Is the space $\mathcal M(F)$ an absolute neighborhood extensor for compact metric spaces? 
\end{prob}

Since $\mathcal M(F)$ is the orbit space of an action by isometries of an ANE group upon an $ANE(\mathcal C)$ space,
it would not be a big surprise that the above problem has a positive answer.
\begin{thm}
The affirmative answer to Problem~\ref{anr} implies the Injectivity Conjecture.
\end{thm}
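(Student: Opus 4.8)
The plan is to turn a positive answer to Problem~\ref{anr} into the statement that the universal completely regular map $\nu_{F}$ is a \emph{numerable} (Dold) fibration, and then to run the standard fiberwise argument, which would otherwise be blocked by the bad base $B$. First I would reduce to the case that $F=M$ is a compact $Q$-manifold: by the remark following Conjecture~\ref{WCo1} the Injectivity Conjecture for $Q$-manifolds yields it for ordinary manifolds after multiplication by the Hilbert cube. Now let $p\colon E\to B$ be a Hurewicz fibration with isometric fibers $F$ and let $f\colon E\to F\times B$ be a fiberwise $\epsilon$-map over $B$, so that $\pi\circ f=p$ and hence $f^{*}\pi^{*}=p^{*}$. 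By Theorem~\ref{iso-reg} the map $p$ is completely regular, so Proposition~\ref{universal} supplies a classifying map $g\colon B\to\mathcal M(F)$ with $p\cong g^{*}(\nu_{F})$, while the trivial fibration $\pi$ is $c^{*}(\nu_{F})$ for a constant $c$. The crucial observation is that, under Problem~\ref{anr}, $\mathcal M(F)$ is an ANR, hence locally contractible and paracompact. Since $\nu_{F}$ is completely regular it is a Serre fibration, and over each member of an open cover of $\mathcal M(F)$ by contractible sets a Serre fibration with $Q$-manifold fiber is fiber-homotopy trivial; such a cover is numerable, so $\nu_{F}$ is a numerable fibration, and therefore so are its pullbacks $p$ and $\pi$ over $B$.

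With numerability available the argument closes quickly. Choosing $\epsilon$ smaller than the constant $\delta$ produced by Ferry's theorem (Theorem~\ref{alf}) for the fixed metric on $F$, every restriction $f|_{p^{-1}(b)}\colon p^{-1}(b)\to F$ is an $\epsilon$-map between copies of $F$, hence $\epsilon$-close to a homeomorphism and in particular a homotopy equivalence. Thus $f$ is a fiberwise homotopy equivalence between numerable fibrations over $B$, and Dold's theorem, which holds for numerable fibrations over an arbitrary base, upgrades it to a fiber homotopy equivalence $E\simeq_{B}F\times B$. Consequently $f^{*}\colon h^{*}(F\times B)\to h^{*}(E)$ is an isomorphism for every cohomology theory. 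Since $\pi$ admits the section $b\mapsto(x_{0},b)$, the map $\pi^{*}$ is a split monomorphism and $\pi^{*}(\alpha)\ne 0$ for the given nonzero $\alpha$; applying the isomorphism $f^{*}$ gives $f^{*}\pi^{*}(\alpha)=p^{*}(\alpha)\ne 0$, which is exactly the Injectivity Conjecture. Note that this $\epsilon$ depends only on $F$ and its metric, which is even stronger than what the conjecture requires.

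The main obstacle is the numerability step. Over $B$, which is connected and locally connected but need not be locally contractible, a Hurewicz fibration need not be locally fiber-homotopy trivial, so Dold's theorem is directly unavailable and a fiberwise homotopy equivalence need not be a fiber homotopy equivalence; this is exactly why the unconditional version of the statement fails. What rescues the situation is that the moduli space $\mathcal M(F)$ is well behaved even when $B$ is not: \emph{full} ANR-ness, as opposed to the finite-dimensional extension property $ANE(n)$ already established in Theorem~\ref{ANE(n)}, is precisely what forces $\nu_{F}$ to be numerable and thereby transports the good homotopy-theoretic behaviour of $\mathcal M(F)$ to the pulled-back fibrations over the arbitrary base $B$. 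I would therefore expect the technical heart of a complete write-up to be the verification that a completely regular map with $Q$-manifold fibers over an ANR is a numerable fibration; granting this lemma, the remaining steps are routine applications of Ferry's theorem and Dold's theorem.
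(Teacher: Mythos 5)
Your strategy is genuinely different from the paper's, but it has a gap that no known result repairs: the numerability step. You assert that, because $\mathcal M(F)$ is an ANR, the completely regular map $\nu_F$ is fiber-homotopy trivial over the members of an open cover of $\mathcal M(F)$ by contractible sets, hence numerable. But complete regularity only gives that $\nu_F$ is a \emph{Serre} fibration, and a Serre fibration over a contractible base need not be fiber-homotopy trivial: the implication ``contractible base $\Rightarrow$ fiber-homotopy trivial'' is valid for Hurewicz (or Dold) fibrations, and the homotopy lifting property of a Serre fibration is useless here because neither the open sets $U\subset\mathcal M(F)$ nor $B$ are CW complexes. Whether a completely regular map with compact manifold or $Q$-manifold fibers is a Hurewicz fibration is exactly the open Hurewicz Fibration Problem (Question~\ref{Hurewicz}); the Dyer--Hamstrom theorem (Theorem~\ref{DH}) settles it only over finite-dimensional bases, and $\mathcal M(F)$ is infinite dimensional. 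ANR-ness of the base buys local contractibility and numerability of covers, but it does not transport a contraction of $U$ into a fiberwise homotopy over $U$. So the lemma you defer to (``a completely regular map with $Q$-manifold fibers over an ANR is a numerable fibration'') is not a routine verification; it is a local form of the open problem that the paper's \emph{other} theorem assumes as an explicit hypothesis, and the whole point of the present theorem is to avoid that assumption. What you have actually sketched is ``ANR-ness of $\mathcal M(F)$ plus the Hurewicz Fibration Problem implies the Injectivity Conjecture.'' A warning sign is your closing remark that your $\epsilon$ depends only on $F$ and its metric: that extra strength is inherited directly from the unjustified numerability claim.

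For contrast, the paper's proof avoids all fibration theory over infinite-dimensional bases. It argues by contradiction: a sequence of counterexamples $p_k\colon E_k\to B$ with fiberwise $\epsilon_k$-maps $f_k$, $\epsilon_k\to 0$, annihilating a fixed $\alpha\in h^*(B)$, is assembled (via the graphs of $\pi_M\circ f_k$ and the $Q$-manifold versions of Theorems~\ref{iso-reg} and~\ref{alf}) into a single completely regular map over the one-point compactification of $\coprod_k B_k$, compactified by a copy of $M$ at infinity. This map is classified into $\mathcal M(M)$ by Proposition~\ref{universal}; the ANE hypothesis of Problem~\ref{anr} is used only to \emph{extend} the classifying map over a neighborhood in a larger compactification built from polyhedral approximations $L_i$ of $B$, which forces the restriction of the extension to some $T_k$ to be null-homotopic. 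The contradiction then comes from continuity of $h^*$ under inverse limits together with the fact that over the \emph{finite-dimensional} polyhedra $L_j$ the Dyer--Hamstrom theorem does apply: there the pulled-back completely regular map is a trivial bundle, so the nonzero class $\alpha_j$ cannot map to zero. Thus the ANR property enters purely as an extension property for maps into $\mathcal M(M)$, and locally trivial bundle theory is invoked only over finite-dimensional bases---precisely the move that keeps the argument independent of the Hurewicz Fibration Problem, and precisely what your proposal is missing.
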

\begin{proof} Assume that the Injectivity Conjecture failed to be true for $M$. Then there is a compact
space $B$, nonzero $\alpha\in h^*(B)$, a sequence of Hurewicz fibrations $p_k:E_k\to B$
with isometric fibers $M$, and a sequence of fiberwise $\epsilon_k$-maps $f_k:E_k\to M\times B$,
$\epsilon\to0$ such that $f_k^*\pi_B^*(\alpha)=0$ for all $k$. The latter implies that $p_k^*(\alpha)=0$
for all $k$.
Here $\pi_B$ and $\pi_M$ denote projections of the product $B\times M$ to the factors.

We define a compactification $X$ of $\coprod E_k$ by $M$ as the subspace
$$X=\coprod G_{k}\cup \{a\}\times M\subset\alpha(\coprod_kE_k)\times M$$ of the product of the one-point compactification of the union of $E_k$ and $M$ where $G_{k}\subset E_k\times M$ is the graph of the composition
$\pi_M\circ f_k$ and $a$ is the compactifying point in $\alpha(\coprod_k E_k)$. The $Q$-manifold versions of Theorem~\ref{iso-reg}
and Theorem~\ref{alf}  imply that the union of $p_k$
defines a completely regular map $p:X\to\alpha(\coprod B_k)$ with fibers $M$
where each $B_k$ is homeomorphic to $B$. By Proposition~\ref{universal}, $p=f^*(\nu_M)$ for some
continuous map $f:\alpha(\coprod_kB_k)\to\mathcal M(M)$.

We present $B=\lim_{\leftarrow}\{L_i,\phi^i_j\}$ as the limit of an inverse sequence of compact polyhedra.
Let $T$ be the natural compactification of $\coprod L_i$ by $B$. 
If $\mathcal M(M)$ is an ANE for compact metric spaces, then there is an extension 
$\bar f:W\to\mathcal M(M)$ to a neighborhood of $\alpha(\coprod B_k)$ in $\alpha(\coprod T_k)$.
We note that there is $k$ such that $T_k\subset W$ and the restriction of $\bar f$ to $T_k$ is null-homotopic.
Let $\bar p_k:Z_k\to T_k$ be the restriction of $\bar f^*(\nu_M)$ over $T_k$.

For sufficiently large $i$, there is an $\alpha_i\in h^*(L_i)$ that maps to $\alpha\in h^*(B)$. Let
$U_i \subset T$ be the compactification of  $L_i \coprod L_{i+1}\coprod\dots$ by $B.$ The bonding map
$B \to L_i$ factors through $U_i$; let $\alpha'_i\in h^*(U_i)$ be the image of $\alpha_i$. Let $V_i \subset Z_k$ be
the preimage in $Z_k$ of the copy of $U_i $ in $T_k$. Let $\beta'_i\in h^*(V_i)$ be the image of $\alpha'_i$.
Since $\alpha'_i$ 
maps to $\alpha$, the image of $\beta'_i$ in $h^*(E_k)$ is the same as the image of $\alpha$,
which is trivial by the assumption. Hence the image of $\beta'_i$ in $h^*(V_j)$ is trivial for
some $j > i$. If  $\alpha'_J\in h^*(U_j)$ is the image of $\alpha'_i$, then the image of $\alpha'_j$
in $h^*(V_j)$
is trivial. Finally, if $\alpha_j \in h^*(L_j)$ is the image of $\alpha_i$, then $\alpha_j$ maps to $\alpha'_j$
under
the map $U_j \to L_j$, which in turn goes to 0 under the restriction $V_j \to U_j$ of
$\bar p_k$. The composition $(\bar p_k)^{-1}
(L_j) \to V_j \to U_j \to L_j$ coincides with the restriction $(\bar p_k)^{-1}
(L_j) \to L_j$
of $\bar p_k$, and therefore $\alpha_j$ goes to zero under the latter restriction.
This contradicts the fact that $p_k$ is a trivial bundle over $L_j$.
\end{proof}

\end{document}